\newcommand{\fnote}[1]{\footnote{\small sharp1}}
\newcommand{\G}{{\mathcal G}}
\newcommand{\N}{{\mathbb N}}
\newcommand{\Z}{{\mathbb Z}}
\newcommand{\R}{{\mathbb R}}
\newcommand{\kk}{\Bbbk}
\newcommand{\mass}{\mathbf{M}}
\newcommand{\M}{\mathbf{M}}
\newcommand{\Tr}{\mathcal{T}}
\newcommand{\can}{{\rm can}}
\newcommand{\eps}{\varepsilon}
\newcommand{\area}{{\rm area}}
\newcommand{\diam}{{\rm diam}}
\newcommand{\dias}{{\rm dias}}
\newcommand{\sys}{{\rm sys}}
\newcommand{\scg}{{\rm scg}}
\newcommand{\vol}{{\rm vol}}
\newcommand{\length}{{\rm length}}
\newcommand{\FR}{{\rm FillRad}}
\newcommand{\FillRad}{{\rm FillRad}}
\newcommand{\w}{{\rm UW}_{1}}
\newcommand{\hs}{{\rm HS}}
\newcommand{\cf}{{\it cf.}}
\newcommand{\ie}{{\it i.e.}}
\numberwithin{equation}{section}
\newtheorem{theorem}{Theorem}[section]
\newtheorem{proposition}[theorem]{Proposition}
\newtheorem{corollary}[theorem]{Corollary}
\newtheorem{lemma}[theorem]{Lemma}
\theoremstyle{definition}
\newtheorem{definition}[theorem]{Definition}
\newtheorem{example}[theorem]{Example}
\newtheorem{remark}[theorem]{Remark}
\long\def\forget#1\forgotten{} %
\title[Diastolic and isoperimetric inequalities]{Diastolic and isoperimetric inequalities on surfaces}
\author[F.~Balacheff]{Florent Balacheff}
\address{Florent Balacheff - Universit\'e des Sciences et Technologies, Laboratoire Paul Painlev\'e, B\^at. M2, 59655 Villeneuve d'Ascq, France}
\email{Florent.Balacheff@math.univ-lille1.fr}
\author[S.~Sabourau]{St\'ephane Sabourau}
\address{
St\'ephane Sabourau - Universit\'e Fran\c{c}ois Rabelais, Tours.
Laboratoire de Math\'ematiques et  Physique Th\'eorique.
CNRS, UMR 6083.
F\'ed\'eration de Recherche Denis Poisson (FR 2964).
Parc de Grandmont, 37400 Tours, France}
\email{sabourau@lmpt.univ-tours.fr}
\subjclass[2000]
{Primary 53C23; 
Secondary 53C20, 58E10}
\keywords{Cheeger constant, closed geodesics, curvature-free inequalities, diastole, isoperimetric inequalities, one-cycles.}
\thanks{The first author was supported by the Swiss National Science Foundation (grant 20-118014/1) during the redaction of this article. The second author has been partially supported by the Swiss National Science Foundation.}
\begin{document}

\begin{abstract}
We prove an universal inequality between the diastole, defined using a minimax process on the one-cycle space, and the area of closed Riemannian surfaces. Roughly speaking, we show that any closed Riemannian surface can be swept out by a family of multi-loops whose lengths are bounded in terms of the area of the surface. This diastolic inequality, which relies on an upper bound on Cheeger's constant, yields an effective process to find short closed geodesics on the two-sphere, for instance.
We deduce that every Riemannian surface can be decomposed into two domains with the same area such that the length of their boundary is bounded from above in terms of the area of the surface.
We also compare various Riemannian invariants on the two-sphere to underline the special role played by the diastole.\\

\noindent {\scshape R\'esum\'e.} 
Nous d\'emontrons une in\'egalit\'e universelle entre la diastole, d\'efinie par un proc\'ed\'e de minimax sur l'espace des $1$-cycles, et l'aire d'une surface riemannienne ferm\'ee. De mani\`ere informelle, nous prouvons que toute surface riemannienne ferm\'ee peut \^etre balay\'ee par une famille de multi-lacets dont les longueurs sont contr\^ol\'ees par l'aire de la surface. Cette in\'egalit\'e diastolique, qui repose sur une majoration de la constante de Cheeger,  fournit en particulier un proc\'ed\'e effectif pour trouver de courtes g\'eod\'esiques ferm\'ees sur une $2$-sph\`ere. 
Nous d\'eduisons que toute surface riemannienne peut \^etre d\'ecompos\'ee en deux domaines de m\^eme aire dont la longueur du bord commun est major\'ee \`a l'aide de l'aire de la surface. Nous comparons \'egalement divers invariants riemanniens sur la $2$-sph\`ere afin de souligner le r\^ole sp\'ecial jou\'e par la diastole.
\end{abstract}

\maketitle


\forget
Rien n'apparait dans le document !
\forgotten

\section{Introduction}

The topology of a manifold~$M$ and the topology of its loop space~$\Lambda M$ are closely related, through their homotopy groups, for instance.
The critical points of the length or energy functionals on the loop space of a Riemannian manifold can be studied using this connection.
These critical points have a special geometric meaning since they agree with the closed geodesics on the manifold.

From the isomorphism between $\pi_{1}(\Lambda M,\Lambda^0 M)$ and~$\pi_{2}(M)$ for the two-sphere (here, $\Lambda^0 M$ denotes the space of constant curves), G.~D.~Birkhoff proved the existence of a nontrivial closed geodesic on every Riemannian two-sphere using a minimax argument on the loop space (this method was extended in higher dimension by A.~Fet and L.~Lyusternik, \cf~\cite{klin}).

Relationships on the lengths of these closed geodesics have then been investigated.
For instance, C.~Croke~\cite{Cr88} showed that every Riemannian two-sphere~$M$ has a nontrivial closed geodesic of length
\begin{equation} \label{eq:cro}
\scg(M) \leq 31 \sqrt{\area(M)}.
\end{equation}
The notation~`$\scg$' stands for the (length of a) shortest closed geodesic.
The inequality~\eqref{eq:cro} has been improved in~\cite{nr}, \cite{Sa04} and~\cite{rot05}.

The closed geodesic on the two-sphere obtained in C.~Croke's theorem does not always arise from a minimax argument on the loop space, even though such an argument is used in the proof.
Indeed, the closed geodesics obtained from a minimax argument on the loop space have a positive index when they are nondegenerate, which is the case for generic (bumpy) metrics.
Now, consider two-spheres with constant area and three spikes arbitrarily long (\cf~\cite[Remark~4.10]{Sa04} for further detail).
On these spheres, the closed geodesics satisfying the inequality~\eqref{eq:cro} have a null index while the closed geodesics with positive index are as long as the spikes.
Therefore, a minimax argument on the loop space does not provide an effective way to bound the area of a Riemannian two-sphere from below.
More precisely, given a Riemannian two-sphere~$M$, we define the diastole, $\dias_{\Lambda}(M)$, over the loop space~$\Lambda M$ as
$$
\dias_{\Lambda}(M) := \inf_{(\gamma_{t})} \sup_{0 \leq t \leq 1} \length(\gamma_t)
$$
where $(\gamma_{t})$ runs over the families of loops inducing a generator of $\pi_{1}(\Lambda M,\Lambda^0 M) \simeq \pi_{2}(M)$.
Then, 
\begin{equation} \label{eq:fail}
\mbox{the ratio }\frac{\dias_{\Lambda}(M)}{\sqrt{\area(M)}} \mbox{ is unbounded}
\end{equation}
on the space of Riemannian metrics on~$M$.

\forget
there exists \emph{no} universal constant~$C$ such that
\begin{equation} 
\dias_{\Lambda}(M) \leq C \, \sqrt{\area(M)}
\end{equation}
for every Riemannian two-sphere~$M$.
That is, there exists a sequence~$M_n$ of Riemannian two-spheres such that
\begin{equation} \label{eq:fail}
\lim_{n \to +\infty} \frac{\dias_{\Lambda}(M_n)}{\sqrt{\area(M_n)}} = + \infty.
\end{equation}
\forgotten

Note that a diastolic inequality between the diastole over the loop space and the volume of the convex hypersurfaces in the Euclidean spaces does hold true, \cf~\cite{tr85}, \cite{Cr88}.

The existence of a closed geodesic on the two-sphere can also be proved by using a minimax argument on a different space, namely the one-cycle space~$\mathcal{Z}_{1}(M;\Z)$.
(Recall that one-cycles are unions of loops, \cf~Section~\ref{sec:dias} for a precise definition.)
This minimax argument relies on F.~Almgren's isomorphism~\cite{almgren} between $\pi_{1}(\mathcal{Z}_{1}(M;\Z),\{0\})$ and $H_{2}(M;\Z)$, which holds true for every closed manifold~$M$.
Given a closed Riemannian surface~$M$, we define the diastole over the one-cycle space as
\begin{equation} \label{eq:Z}
\dias_{\mathcal{Z}}(M) := \inf_{(z_{t})} \sup_{0 \leq t \leq 1} \M(z_t)
\end{equation}
where $(z_{t})$ runs over the families of one-cycles inducing a generator of
\linebreak
$\pi_{1}(\mathcal{Z}_{1}(M;\kk),\{0\})$ and $\M(z_t)$ represents the mass (or length) of~$z_t$. Here $\kk=\Z$ if $M$ is orientable and $\kk=\Z/2\Z$ otherwise.
For short, we will write $\dias(M)$ for $\dias_{\mathcal{Z}}(M)$.
From a result of J.~Pitts \cite[p.~468]{pit74}, \cite[Theorem~4.10]{pitts} (see also~\cite{cc}), this minimax principle gives rise to a union of closed geodesics (counted with multiplicity) of total length~$\dias(M)$.
Hence, $\scg(M) \leq \dias(M)$.
This principle has been used in~\cite{cc}, \cite{nr}, \cite{Sa04}, \cite{rot05}, \cite{rot06}, \cite{Ba08} and \cite{Sa09} in the study of closed geodesics on Riemannian two-spheres.

On nonsimply connected surfaces, no minimax principle is required to show the existence of a closed geodesic.
In this setting, we directly define the systole as the minimum of the length functional over the connected components not containing the trivial loops of the loop space.
Every closed Riemannian surfaces~$M$ of genus~$g \geq 1$ satisfies the following asymptotically optimal systolic inequality
\begin{equation} \label{eq:sys}
\sys(M) \leq C \, \frac{\log (g+1)}{\sqrt{g}} \, \sqrt{\area(M)}
\end{equation}
where $\sys(M)$ is the systole of~$M$ and~$C$ is a universal constant, \cf~\cite{gro83}, \cite{bal04} and~\cite{KS05} for three different proofs. \\

The goal of this article is to establish curvature-free inequalities similar to~\eqref{eq:cro} and~\eqref{eq:sys}.
The use of the one-cycle space, rather than the loop space, introduces some flexibility.
It allows us to cut and paste closed curves, and to deal with both simply and nonsimply connected surfaces.
We show a difference of nature between the diastole over the loop space and the diastole over the one-cycle space on the two-sphere, and between the systole and the diastole for surfaces of large genus.
More precisely, we obtain the following diastolic inequality.

\begin{theorem} \label{theo:A}
There exists a positive constant $C \leq 10^8$ 
such that every closed Riemannian surface~$M$ of genus~$g \geq 0$ satisfies
\begin{equation} \label{eq:A}
\dias(M) \leq C \, \sqrt{g+1} \sqrt{\area(M)}.
\end{equation}
\end{theorem}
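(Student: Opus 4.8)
The plan is to bound the diastole over the one-cycle space by constructing an explicit sweepout of $M$ by short multi-loops, using the Cheeger constant as the engine for controlling lengths. The starting observation is that the minimax quantity $\dias(M)$ is controlled by any single family of one-cycles $(z_t)$ sweeping out $M$ (i.e. inducing a generator of $\pi_1(\mathcal{Z}_1(M;\kk),\{0\})$ via Almgren's isomorphism with $H_2(M;\kk)$), so it suffices to exhibit \emph{one} good sweepout. The natural way to produce such a family is to take a Morse function $f\colon M\to[0,1]$ and let $z_t=\partial(f^{-1}[0,t])$ (or the level sets $f^{-1}(t)$); this is a standard presentation of a generator of $\pi_1(\mathcal{Z}_1(M;\kk))$. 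The length of $z_t$ is then $\length(f^{-1}(t))$, so the whole problem reduces to finding a function whose level sets are uniformly short, with bound $C\sqrt{g+1}\sqrt{\area(M)}$.

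To get such a function I would argue by a \emph{cut-and-paste / bisection} scheme driven by an upper bound on Cheeger's constant. First, one needs a Cheeger-type estimate: on a closed Riemannian surface of genus $g$, one can find a separating (or, if necessary, non-separating, handled via $\Z/2$ coefficients) multicurve of length $O(\sqrt{g+1}\sqrt{\area(M)})$ that divides $M$ into two pieces each of area at most, say, $\tfrac34\area(M)$. This is the key analytic input and presumably occupies an earlier section of the paper — it would follow from combining a bound on Cheeger's constant $h(M)\le c\sqrt{g+1}/\sqrt{\area(M)}$ (of the type appearing in the systolic literature, e.g. via the methods behind~\eqref{eq:sys}) with the coarea formula. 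Iterating this bisection, one builds a binary tree of subdomains, each of area roughly halving at each level and each bounded by a multicurve of controlled length; after $O(\log(\area))$ steps one reaches small cells. One then assembles these boundary curves into the level sets of a single function $f$ (interpolating linearly on each cell), so that every level $f^{-1}(t)$ is contained in a bounded number of the multicurves from the tree.

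The main obstacle — and the step that makes the constant blow up if handled naively — is controlling the \emph{accumulation} of boundary lengths: a given level set $f^{-1}(t)$ may meet boundary pieces from many levels of the tree at once, so one must ensure the total length it picks up is still $O(\sqrt{g+1}\sqrt{\area(M)})$ rather than $O(\log(\area)\cdot\sqrt{g+1}\sqrt{\area(M)})$. The trick is that the areas decrease geometrically down the tree, so by the Cheeger bound the boundary lengths at level $k$ of the tree are $O(\sqrt{g_k+1}\sqrt{\area_k})$ with $\area_k$ geometrically small; summing a geometric series in $\sqrt{\area_k}$ converges, absorbing the logarithmic factor into the universal constant. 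One must also track how the genus is distributed among the subdomains across the tree (each bisection can only split the genus, by additivity of genus under gluing along circles), so the factors $\sqrt{g_k+1}$ stay under control and the total is genuinely $O(\sqrt{g+1}\sqrt{\area(M)})$. Finally, one checks that the resulting family $(z_t)$ of one-cycles is continuous in the flat topology and non-contractible in $\mathcal{Z}_1(M;\kk)$ — i.e. it genuinely realizes a generator — which gives $\dias(M)\le\sup_t\M(z_t)\le C\sqrt{g+1}\sqrt{\area(M)}$ with an explicit $C\le 10^8$ once all the geometric-series constants are made numerical.
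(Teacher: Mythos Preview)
Your overall strategy --- bound Cheeger's constant by $c\sqrt{g+1}/\sqrt{\area(M)}$, then iterate a bisection, then sum a geometric series in $\sqrt{\area_k}$ --- is exactly the skeleton of the paper's argument. The Cheeger bound you posit is Proposition~\ref{prop:h} (obtained by combining Cheeger's inequality with the Li--Yau eigenvalue estimate), and the geometric-series control of accumulated boundary length is morally what makes the induction close.

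The gap is in the sentence ``one then assembles these boundary curves into the level sets of a single function $f$ (interpolating linearly on each cell).'' Two problems hide there. First, the base case: iterated Cheeger bisection drives the \emph{area} of the cells to zero but gives no control on their diameter or topology, so there is no reason a ``small cell'' admits a sweepout by short curves --- that is precisely the diastolic inequality you are trying to prove, now for the cell. Second, the interpolation: gluing sweepouts of the two pieces $D_1,D_2$ into a sweepout of $M_0$ is a nontrivial cut-and-paste on one-parameter families of one-cycles, not a formality. The paper flags exactly this in the introduction: ``passing from a discrete parameter family of one-cycles to a continuous parameter family on a smooth surface is technically challenging.''

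The paper's fix for both issues is to first replace $M$ by a $K$-bilipschitz piecewise-flat surface $M_0$ triangulated by \emph{unit} equilateral triangles (Proposition~\ref{prop:bilip}); this makes the base case trivial (a single triangle is swept out by curves of length~$\leq 3$) and turns ``iterate until small'' into ``induct on the number $N$ of triangles.'' The Cheeger cut is then pushed onto the $1$-skeleton to give a \emph{simplicial} bisection (Proposition~\ref{prop:split}), and the gluing of sweepouts is done once and for all as the comparison $\dias'(M_0)\le\max_i\dias'(M_i)+\length(\delta)$ (Proposition~\ref{prop:max}), where $M_i$ is $D_i$ with cones attached. The induction then closes not by summing over a whole tree but by a one-step ratio trick: either the worst ratio $\dias'/\sqrt{\area}$ already drops for $M_1$ (induction on $N$ applies), or $\dias'(M_1)\le\lambda\,\dias'(M_0)$ with $\lambda=\sqrt{\area(M_1)/\area(M_0)}<1$, whence $\dias'(M_0)\le\length(\delta)/(1-\lambda)$ and a short computation bounds $\alpha/(1-\lambda)\le 4$. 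Your geometric-series intuition is exactly what makes $1/(1-\lambda)$ finite, but the simplicial setup is what supplies a genuine base case and a finite induction parameter.
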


Since the minimax principle~\eqref{eq:Z} gives rise to a union of closed geodesics of length~$\dias(M)$, Theorem~\ref{theo:A} yields a construction of {\em short} closed geodesics on surfaces through Morse theory over the one-cycle space.
In particular, it sheds some light on the nature of the closed geodesics whose lengths provide a lower bound on the area of the two-spheres. 
Compare with~\eqref{eq:cro} and~\eqref{eq:fail}.

The dependence on the genus in the inequality~\eqref{eq:A} is optimal, \cf~Remark~\ref{rem:g}, and should be compared with the one in~\eqref{eq:sys}.


A version of the diastolic inequality~\eqref{eq:A} holds true for compact surfaces with boundary, in particular for disks, \cf~Remark~\ref{rem:boundary}.

The inequality \eqref{eq:A} is derived from a stronger estimate, where the diastole is replaced with the technical diastole introduced in Definition~\ref{def:dias}. This estimate also yields the following result.

\begin{corollary} \label{cor:A}
There exists a positive constant $C$ such that every closed Riemannian surface $M$ of genus~$g \geq 0$ decomposes into two domains with the same area whose length of their common boundary~$\gamma$ satisfies
\begin{equation}
\length(\gamma) \leq  C \, \sqrt{g+1} \sqrt{\area(M)}.
\end{equation}
\end{corollary}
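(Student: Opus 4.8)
The plan is to read off the bisecting curve directly from the sweepout that realizes the estimate on the technical diastole, by running an intermediate value argument on the area enclosed by the curves of the sweepout.

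First I would invoke the strengthened form of Theorem~\ref{theo:A} mentioned above, namely the bound on the technical diastole of Definition~\ref{def:dias}. It provides a sweepout of~$M$: a family of one-cycles $(z_t)_{0\le t\le 1}$, continuous in the flat topology, based at the zero cycle $z_0=z_1=0$, inducing a generator of $\pi_1(\mathcal{Z}_1(M;\kk),\{0\})$, with $\kk=\Z$ if $M$ is orientable and $\kk=\Z/2\Z$ otherwise, and such that $\M(z_t)\le C\,\sqrt{g+1}\,\sqrt{\area(M)}$ for every $t\in[0,1]$. Through Almgren's isomorphism, such a sweepout comes with a family of two-chains $(A_t)$ with $\partial A_t=z_t$ and $A_0=0$ that sweeps out the fundamental class, so that $A_1$ represents a generator of $H_2(M;\kk)$; since a closed surface carries no nonzero two-boundary, $A_1$ equals, up to sign, the fundamental chain $[M]$. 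For the sweepouts produced by the construction underlying Definition~\ref{def:dias} — concretely, the sublevel sets of the relevant function — each $A_t$ is a multiplicity-one region $\Omega_t\subseteq M$, and one obtains a nested family of subsurfaces with $\partial\Omega_t=z_t$, $\area(\Omega_0)=0$ and $\area(\Omega_1)=\area(M)$.

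Next I would check that $t\mapsto\area(\Omega_t)$ is continuous. When the sweepout is given by the sublevel sets of a function, as in Definition~\ref{def:dias}, this is immediate. In general one argues from flat continuity: $\mathcal{F}(z_t,z_{t'})\to 0$ as $t'\to t$, and, choosing the fillings $\Omega_t$ compatibly, $\Omega_t\triangle\Omega_{t'}$ is spanned by a two-chain of mass at most $\mathcal{F}(z_t,z_{t'})$ plus a vanishing correction, so $\area(\Omega_t\triangle\Omega_{t'})\to 0$. This is the step I expect to be the main obstacle: one must ensure that the abstract one-cycle sweepout supplied by the diastolic estimate genuinely carries a continuously varying partition of~$M$ — equivalently, that the enclosed-area function has no jumps — which is exactly why the argument is set up over the technical diastole of Definition~\ref{def:dias} rather than over $\dias(M)$ itself.

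Finally, since $\area(\Omega_0)=0\le\tfrac12\area(M)\le\area(M)=\area(\Omega_1)$ and the area function is continuous, the intermediate value theorem yields $t_0\in[0,1]$ with $\area(\Omega_{t_0})=\tfrac12\area(M)$. Setting $\gamma:=z_{t_0}=\partial\Omega_{t_0}$, the decomposition $M=\Omega_{t_0}\cup(M\setminus\Omega_{t_0})$ splits $M$ into two domains of equal area $\tfrac12\area(M)$ with common boundary $\gamma$, and
\[
\length(\gamma)=\M(z_{t_0})\le C\,\sqrt{g+1}\,\sqrt{\area(M)},
\]
which is the claimed inequality.
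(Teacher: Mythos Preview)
Your argument is correct and is exactly the deduction the paper intends: no separate proof of the corollary is spelled out beyond the remark that it follows from the bound on the technical diastole~$\dias'(M)$, and your intermediate-value argument on the area enclosed by a sweepout satisfying (D.1--5) --- where condition~(D.5) is precisely what guarantees a nested family~$\Omega_t$ with continuously varying area --- is the natural way to carry this out. One cosmetic point: since $\dias'(M)$ is an infimum that need not be attained, you should work with a sweepout satisfying $\sup_t \M(z_t) < \dias'(M) + \varepsilon$ and absorb the slack into the constant~$C$.
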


The two domains in the previous result are not necessarily connected, even on two-spheres.
A counterexample is given by two-spheres with three long fingers, whose area is equally concentrated in the tips of these fingers (\cf~\cite[Remark~4.10]{Sa04}).

The second author~\cite{Sa04} showed that the area of a bumpy Riemannian two-sphere can be bounded from below in terms of the length of its shortest one-cycle of index one.
Our inequality~\eqref{eq:A} on the two-sphere extends this result since the minimax process used in the definition of the diastole gives rise to a one-cycle of index one when the metric is bumpy.
However, the relation between the filling radius of a bumpy Riemannian two-sphere and the length of its shortest one-cycle of index one established in~\cite{Sa04} cannot be extended to the diastole.
Indeed, from~\cite[Theorem~1.6]{Sa04}, there exists a sequence~$M_n$ of Riemannian two-spheres such that
$$
\lim_{n \to +\infty} \frac{\FR(M_n)}{\dias(M_n)}=0.
$$
This result illustrates the difference of nature between the length of the shortest closed geodesic or of the shortest one-cycle of index one, which can be bounded by the filling radius on the two-sphere, and the diastole.
It also shows that the proof of Theorem~\ref{theo:A} requires different techniques. 

The arguments used throughout this article are rather ``elementary" and robust.
It is our hope that they can be adapted to investigate further problems.

At the end of this article, we consider a minimax principle on the one-cycle space for another functional and establish further geometric inequalities on the two-sphere.

Higher dimensional analogs of the diastole, sometimes called $k$-widths, have been investigated in \cite{alm65}, \cite{pitts}, \cite{gro83} and \cite{Gu07}. In particular, L. Guth \cite{Gu07} obtained upper bounds on the $k$-width of Euclidean domains in terms of their $n$-dimensional volumes. He also showed that no such bound holds in the Riemannian setting for the $(n-1)$-width with $n\geq 3$. Our main result provides a positive result on Riemannian surfaces, answering a question raised in \cite[p. 1148]{Gu07} in a particular case.\\

Let us present the structure of the article and an outline of the proof of the diastolic inequality~\eqref{eq:A}.
The one-cycle space and the definition of the diastole are presented in Section~\ref{sec:dias}.
In Section~\ref{sec:equil}, we show how to replace a Riemannian surface with a simplicial piecewise flat surface with comparable area and diastole.
This will enable us to prove the main diastolic inequality by induction on the number of simplices in an approaching piecewise flat surface.
In Section~\ref{sec:ch}, we prove an upper bound on Cheeger's constant, and on some related invariant, in terms of the area of the surface alone.
This upper bound yields an isoperimetric inequality which permits us to split the surface into two domains $D_1$ and~$D_2$ whose boundary lengths are small in comparison to their areas.
This isoperimetric inequality can be thought of as a discrete version of the diastolic inequality.
At this stage, we could cap off the two domains $D_1$ and~$D_2$ and apply the discrete version of the diastolic inequality to the two resulting surfaces with the hope to derive a continuous version of it by iterating this principle sufficiently many times.
Unfortunately, passing from a discrete parameter family of one-cycles to a continuous parameter family on a smooth surface is technically challenging.
Arguing by induction on the number of simplices of a piecewise flat surface approaching the initial surface turns out to be more manageable in this case.
More specifically, we argue as follows.
In Section~\ref{sec:ch2}, we prove a simplicial version of the isoperimetric inequality obtained from the upper bound on Cheeger's constant.
The two simplicial surfaces obtained by coning off the domains $D_1$ and~$D_2$ have fewer simplices than the initial simplicial surface.
Using a cut-and-paste argument on the families of one-cycles, we compare the diastole of the two new simplicial surfaces to the diastole of the initial simplicial surface in Section~\ref{sec:max}.
The diastolic inequality is then established by induction on the number of simplices and the genus in Section~\ref{sec:proof}.
Further related geometric inequalities about families of one-cycles sweeping out a Riemannian two-sphere are presented in Section~\ref{sec:comp} and in the appendix.

\section{Almgren's isomorphism and definition of the diastole} \label{sec:dias}

We introduce the space of flat chains and cycles with coefficients in $\Z$ or~$\Z/2\Z$, defined in~\cite{fle66} and~\cite{fed}.

\medskip

Let $M$ be a closed Riemannian manifold, $N$ be a submanifold of~$M$ (possibly with boundary), $\kk=\Z$ if $M$ is orientable and $\kk=\Z/2\Z$ otherwise, and $k$ be an integer.

A Lipschitz $k$-chain of~$M$ with coefficients in~$\kk$ is a finite sum $C=\sum a_{i} f_{i}$ where $a_{i} \in \kk$ and~$f_{i}$ is a Lipschitz map from the standard $k$-simplex to~$M$.
The volume of~$C$, denoted by~$|C|$, is defined as $\sum |a_i| \vol(f_i^* \G)$ where $\G$ is the Riemannian metric on~$M$.
The flat pseudo-norm of a Lipschitz $k$-chain~$C$ is the infimum of $|C-\partial D|+|D|$ over the Lipschitz $(k+1)$-chains~$D$.
Two Lipschitz chains $C_{1}$ and~$C_{2}$ are equivalent if the flat pseudo-norm between them is zero.
The flat pseudo-norm induces a distance (flat norm) on the space of equivalent classes of Lipschitz $k$-chains.
By definition, the completion of this metric space, denoted by~$C_{k}(M;\kk)$, is the space of (flat) $k$-chains with coefficients in~$\kk$.

The mass functional on~$C_{k}(M;\kk)$, denoted by~$\M$, is defined as the largest lower-semicontinuous extension of the volume functional on the space of Lipschitz $k$-chains.
The flat norm of a $k$-chain can be computed by using the definition of the flat pseudo-norm with $D$ running over the $(k+1)$-chains and the mass~$\M$ replacing the volume functional.

The space of $k$-chains of~$M$ relative to~$N$ with coefficients in~$\kk$ is defined as the quotient $C_{k}(M,N;\kk):=C_{k}(M;\kk)/C_{k}(N;\kk)$ endowed with the quotient flat norm and the quotient mass still denoted~$\M$.
The boundary map of a Lipschitz $k$-chain induces a boundary map $\partial:C_{k+1}(M,N;\kk) \to C_{k}(M,N;\kk)$ for every $k \in \N$ such that $\partial^2=0$ which gives rise to a complex
$$
\ldots \rightarrow C_{k+1}(M,N;\kk) \overset{\partial}{\rightarrow} C_{k}(M,N;\kk) \rightarrow \ldots
$$
The cycles of this complex are by definition the $k$-cycles of~$M$ relative to~$N$ with coefficients in~$\kk$.
They can be represented by $k$-chains of~$M$ with boundary lying in~$N$.
We denote by $\mathcal{Z}_{k}(M,N;\kk)$ the space formed of these $k$-cycles.
When $N$ is empty, we simply write~$\mathcal{Z}_k(M;\kk)$ or~$\mathcal{Z}_k(M)$. \\

We describe now the structure of the zero- and one-cycles.
A zero-cycle is indecomposable if it induced by a point~$p$, in which case, its mass is equal to~$1$.
Every zero-cycle~$z$ decomposes into a finite sum of indecomposable zero-cycles $p_i$, \ie, $z = \sum_{i=1}^n p_i$, such that $\M(z) = n$.
The structure of the one-cycles is the following.
A one-cycle is indecomposable if it is induced by a simple closed curve or a simple arc with endpoints in~$N$.
Every one-cycle~$z$ decomposes (not necessarily in a unique way) into a sum of indecomposable one-cycles $z_i$, \ie, $z = \sum_{i \in \N} z_i$, such that $\M(z) = \sum_{i \in \N} \length(z_i)$ (see \cite[p. 420]{fed}). \\

The homotopy groups of the space of (relative) cycles have been determined by F. Almgren, \cf~\cite{almgren}, \cite[\S13.4]{alm65} and \cite[\S4.6]{pitts}.
On surfaces, one has the following natural isomorphism for the one-cycle space
\begin{equation} \label{eq:alm}
\pi_1(\mathcal{Z}_1(M;\kk),\{0\}) \simeq H_2(M;\kk) \simeq \kk.
\end{equation}
More generally, if $N$ is a submanifold with boundary of~$M$, one has
\begin{equation} \label{eq:isom}
\pi_1(\mathcal{Z}_1(M,N;\kk),\{0\}) \simeq H_2(M,N;\kk).
\end{equation}

\forget
Strictly speaking, this isomorphism has been established for the space of integral cycles, but the proof still holds true for the space of cycles with $\Z/2\Z$ coefficients.
\forgotten

This isomorphism permits us to apply the Almgren-Pitts minimax principle to the one-cycle space of surfaces.
Recall that $\kk=\Z$ if the surface is orientable and that $\kk=\Z/2\Z$ otherwise. \\

Let us consider the one-parameter families $(z_t)_{0 \leq t \leq 1}$ of one-cycles sweeping out a closed surface~$M$, that is, which satisfy the following conditions
\begin{enumerate}
\item[(D.1)] $z_t$ starts and ends at the null one-cycle;
\item[(D.2)] $z_t$ induces a generator of
$\pi_1(\mathcal{Z}_1(M;\kk),\{0\}) \simeq \kk$.
\end{enumerate}
For technical reasons, we will also consider the following conditions
\begin{enumerate}
\item[(D.3)] there exists a finite subdivision $t_{1} < t_{2} < \cdots < t_{k}$ of~$[0,1]$ and homotopies $(\gamma_{j,t})_{t_p \leq t \leq t_{p+1}}$, $j \in J_{p}$, of finitely many piecewise smooth loops such that the one-cycle~$z_{t}$ agrees with the finite sum $\sum_{j \in J_p} \gamma_{j,t}$ on~$[t_{p},t_{p+1}]$;
\item[(D.4)] for every $t \in ]t_{p},t_{p+1}[$, the loops~$\gamma_{j,t}$, $j \in J_{p}$, are simple and disjoint;
\item[(D.5)] the loops $\gamma_{j,t}$ and~$\gamma_{j',t'}$ are disjoint for $t \neq t'$.
\end{enumerate}

\begin{example}
Let $f:M \to \R$ be a Morse function on a closed surface~$M$.
The level curves $z_t=f^{-1}(t)$
define a one-parameter family of one-cycles~$(z_t)$ satisfying the conditions~(D.1-5).
\end{example}

\begin{definition} \label{def:dias}
The {\em diastole} of $M$, denoted by $\dias(M)$, is defined as the minimax value
\begin{equation}
\dias(M) := \inf_{(z_{t})} \sup_{0 \leq t \leq 1} \M(z_t)
\end{equation}
where  $(z_{t})$ runs over the families of one-cycles satisfying the conditions~\mbox{(D.1-2)} above.
Similarly, the {\em (technical) diastole} of $M$, denoted by $\dias'(M)$, is defined as the minimax value
\begin{equation}
\dias'(M) := \inf_{(z_{t})} \sup_{0 \leq t \leq 1} \M(z_t)
\end{equation}
where  $(z_{t})$ runs over the families of one-cycles satisfying the conditions~\mbox{(D.1-5)} above.
\end{definition}

\begin{remark}
We will prove that the diastolic inequality \eqref{eq:A} holds true if one replaces the diastole~$\dias(M)$ with the (technical) diastole~$\dias'(M)$.
Since $\dias(M) \leq \dias'(M)$, this will yield Theorem~\ref{theo:A}.
\end{remark}

\begin{remark} \label{rem:boundary}
If $M$ is a Riemannian compact surface with boundary~$\partial M$, the isomorphism~\eqref{eq:isom} allows us to define a relative notion of diastole using one-cycles relative to~$\partial M$.
This relative diastole on~$M$ is less or equal to the diastole on the closed Riemannian surface obtained by filling the boundary components of~$M$ with disks of small area.
Therefore, a diastolic inequality similar to~\eqref{eq:A} holds for compact surfaces with boundary where the diastole is replaced with the diastole relative to the boundary.
\end{remark}

\forget
\section {Comparison between the systole and the diastole
of non simply connected surfaces}

We will prove that
\begin{proposition}
If $M$ is a Riemannian closed surface with $\pi_1(M)\neq 0$, then
$$
\sys(M)\leq \dias(M)
$$
where $\sys(M):=\inf\{\length(\gamma) \mid \gamma \text{ is non
contractible}\}$.
\end{proposition}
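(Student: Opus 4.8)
The plan is to argue by contradiction, using the universal cover. Suppose $\dias(M)<\sys(M)$. Then there is a family $(z_t)_{0\le t\le 1}$ satisfying conditions~(D.1--2) with $L:=\sup_t\M(z_t)<\sys(M)$. Since an indecomposable one-cycle of mass less than $\sys(M)$ is induced by a simple closed curve of length less than $\sys(M)$, hence contractible, and since $\M(z_t)$ equals the sum of the lengths of the indecomposable components of $z_t$, each one-cycle $z_t$ decomposes as a sum of \emph{contractible} simple closed curves. This is the only point where the hypothesis $L<\sys(M)$ is used.

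The core step is to lift the whole family to the universal cover $\pi\co\tilde M\to M$, that is, to produce a continuous family $(\tilde z_t)$ in $\mathcal Z_1(\tilde M;\kk)$ with $\pi_\#\tilde z_t=z_t$ for every $t$ and $\tilde z_0=\tilde z_1=0$, where $\pi_\#\co\mathcal Z_1(\tilde M;\kk)\to\mathcal Z_1(M;\kk)$ denotes the (continuous, base-point preserving) push-forward of flat chains. Then $(\tilde z_t)$ is a loop in $\mathcal Z_1(\tilde M;\kk)$ based at $\{0\}$ whose image under $\pi_\#$ is the loop $(z_t)$. For a family that moreover satisfies~(D.3--5) this is transparent: each loop $\gamma_{j,t}$ is one of the disjoint summands of $z_t$, so $\length(\gamma_{j,t})\le L<\sys(M)$ for all $t$; hence $t\mapsto\gamma_{j,t}$ is a free homotopy through contractible loops, which lifts --- uniquely once a lift of one loop is prescribed --- by the homotopy-lifting property of the covering $\pi$, and one checks that these lifts match at the subdivision times $t_p$ and that $\tilde z_0=\tilde z_1=0$ because $z_0=z_1=0$. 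For a general family satisfying only~(D.1--2) one reduces to this case by the standard Almgren--Pitts discretization, which one can arrange to increase $\sup_t\M(z_t)$ by an arbitrarily small amount, or one argues directly that the restriction of $\pi_\#$ to the set of one-cycles of mass $<\sys(M)$ has the path-lifting property.

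Granting the lift, I would invoke the naturality of Almgren's isomorphism~\eqref{eq:alm} with respect to $\pi$: the push-forward $\pi_\#$ induces a homomorphism $\pi_*$ on $\pi_1$ of the one-cycle spaces which, under~\eqref{eq:alm}, is identified with the push-forward $\pi_*\co H_2(\tilde M;\kk)\to H_2(M;\kk)$. Thus the class of $(z_t)$ in $\pi_1(\mathcal Z_1(M;\kk),\{0\})\simeq H_2(M;\kk)$ lies in the image of $\pi_*$. But $\pi_*$ vanishes: if $\pi_1(M)$ is infinite then $\tilde M$ is homeomorphic to $\R^2$, so $H_2(\tilde M;\kk)=0$; the only remaining closed surface with $\pi_1(M)\ne 0$ is $M=\mathbb{RP}^2$, for which $\tilde M=S^2$, $\kk=\Z/2\Z$, and $\pi_*[S^2]=2\,[\mathbb{RP}^2]=0$ because the covering has degree~$2$. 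Hence $(z_t)$ is null-homotopic in $\mathcal Z_1(M;\kk)$, contradicting~(D.2). Taking the infimum over admissible families yields $\dias(M)\ge\sys(M)$.

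The step I expect to be the real obstacle is the lifting. Conceptually it is clear --- short one-cycles are assembled from contractible loops, and contractible loops lift --- but making the lift continuous in the flat topology and base-point preserving for an arbitrary family satisfying only~(D.1--2), rather than one satisfying the technical conditions~(D.3--5), requires either the discretization machinery or a careful local analysis of $\pi_\#$ near the set of one-cycles of bounded mass, which is compact by the Federer--Fleming compactness theorem. The remaining ingredients --- naturality of Almgren's isomorphism and vanishing of $\pi_*$ on $H_2$ of the universal cover --- are soft.
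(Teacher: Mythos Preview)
Your approach via the universal cover is genuinely different from the paper's. The paper's argument (which sits inside a \verb|\forget...\forgotten| block and was deliberately left out of the published version) tries instead to fill each $z_t$ by a $2$-chain $D^2_t=\sum_i D^2_{i,t}$ obtained by capping the contractible components $z_{i,t}$ with disks, and then to conclude via $\pi_1(\mathcal Z_2(M;\kk))\simeq \mathcal Z_3(M;\kk)$. The authors themselves flag the gap: ``The problem is that this family is not necessarily continuous in~$t$.'' So there is no complete proof in the paper to compare against; both routes stall at the same place, namely making a \emph{continuous} and coherent choice --- of bounding disks in their case, of lifts in yours --- along the whole family.

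Your identification of the lifting as the obstacle is correct, but the claim that for (D.3--5) families ``one checks that these lifts match at the subdivision times~$t_p$'' hides a real issue. Suppose two contractible simple loops $\gamma_1,\gamma_2$ are born at distinct times and places and are lifted, independently, to sheets differing by a nontrivial deck transformation~$g$. If later they coalesce at a point~$p$ into a single (still contractible) loop~$\gamma$, then any lift of~$\gamma$ is a single closed curve in one sheet; as $t\to t_p^+$ this lift tends to $\tilde\gamma_1+g'\!\cdot\tilde\gamma_2$ for some fixed~$g'$, whereas the limit from the left is $\tilde\gamma_1+\tilde\gamma_2$. Continuity forces $g=g'$, which you have no a priori reason to have arranged. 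Thus ``lift each homotopy $t\mapsto\gamma_{j,t}$ separately'' is not enough; the sheets must be chosen coherently across births, deaths, and recombinations, and this global coherence is exactly as delicate as the continuity of $t\mapsto D^2_t$ in the paper's sketch. Your suggestion to pass through Almgren--Pitts discretization is reasonable, but note that it only brings you back to the (D.3--5) situation, so you still owe the coherence argument above; the alternative claim that $\pi_\#$ restricted to cycles of mass $<\sys(M)$ has the path-lifting property is plausible but is itself a theorem that needs proof, since the fibres of $\pi_\#$ do not have locally constant cardinality (components can split and merge). The naturality of Almgren's isomorphism and the vanishing of $\pi_*$ on $H_2(\tilde M;\kk)$, including your treatment of $\mathbb{RP}^2$, are fine.
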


\begin{proof}
Fix $\eps>0$ and  let $\{z_t\}_{t \in [0,1]}$ be a family of
one-cycles such that
$$
\sup_{t \in [0,1]} \M(z_t) \leq \dias(M)+\eps.
$$
If there exists a $t \in [0,1]$ such that $z_t$ decomposes into a
sum of indecomposable cycles $\{z_{i,t}\}_{i \in \N}$ such that one
of these cycles is non contractible, say $z_{i_0,t}$ for some $i_0
\in \N$, then $\sys(M) \leq \M(z_{i_0,t}) \leq \M(z_t) \leq
\dias(M)+\eps$.

So we can suppose that each decomposition of every $z_t$ gives rise
to a sum of contractible closed curves. Fix $t \in [0,1]$ and such a
decomposition $\{z_{i,t}\}_{i \in \N}$. Then each $z_{i,t}$ bounds a
disk $D^2_{i,t}$. This define a $2$-cycle $D^2_t=\sum_{i \in \N}
D^2_{i,t}$. The problem is that this family is not necessarily
continuous in $t$. If this is the case, it is then easy to end the
proof: denote by $C$ the  $3$-cycle induced by the family
$\{D^2_t\}_{t\in [0,1]}$ from the isomorphism
$$
\pi_1(\mathcal{Z}_2(M;\kk)) \simeq \mathcal{Z}_3(M;\kk).
$$
Then $\partial C=z$ which proves that $[z]=0$. A contradiction.

\end{proof}
\forgotten

\section{From Riemannian metrics to simplicial surfaces} \label{sec:equil}

\begin{proposition} \label{prop:bilip}
Every closed Riemannian surface~$M$ is $K$-bilipschitz homeomorphic to a closed piecewise flat surface~$M_{0}$ triangulated by equilateral flat triangles, where $K=33$.
\end{proposition}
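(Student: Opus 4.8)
The plan is to build the piecewise flat model in two stages: first obtain \emph{some} triangulation with good metric control, then subdivide and adjust to make every triangle equilateral, keeping track of the bilipschitz constant throughout. For the first stage, I would fix a finite atlas of coordinate charts on $M$ in which the metric tensor $\G$ is uniformly comparable to the Euclidean metric, say $\lambda^{-1} |v|_{\rm eucl} \le |v|_{\G} \le \lambda |v|_{\rm eucl}$ on each chart, with $\lambda$ as close to $1$ as we like by shrinking the charts (this uses only continuity of $\G$ and compactness of $M$). Using these charts one produces a smooth (or merely Lipschitz) triangulation of $M$ whose simplices are small enough to each lie in a single chart; inside each chart a triangle is then $\lambda^2$-bilipschitz to a Euclidean triangle in the plane. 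Replacing each curved triangle by the corresponding flat (straight-edged) Euclidean triangle and gluing along the shared edges — which match up to the same bilipschitz factor — yields a piecewise flat surface $M_1$ that is $K_1$-bilipschitz to $M$ with $K_1$ a fixed constant close to $1$ (some explicit small number like $2$ or $3$, coming from $\lambda^2$ together with the distortion of the straightening map on a single triangle).

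The second stage is to convert $M_1$, a piecewise flat surface with \emph{arbitrary} flat triangles, into one triangulated by \emph{equilateral} flat triangles. Here the idea is purely two-dimensional and combinatorial-geometric: each flat triangle $T$ of $M_1$ admits a bilipschitz homeomorphism to an equilateral triangle of the same area (an affine map between two triangles), and the bilipschitz constant of the best such affine map is controlled by the shape of $T$ — explicitly by its smallest angle, or equivalently by the ratio (longest edge)$^2$/area. So the key point is to arrange, perhaps after barycentric or edge subdivision, that all triangles of $M_1$ are uniformly \emph{fat} (angles bounded below). One way: subdivide each triangle using a fixed pattern that produces only triangles similar to a bounded family of shapes, then apply the affine maps. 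Since all the equilateral target triangles can be taken to have the same size, they fit together into the desired piecewise flat surface $M_0$. Composing the bilipschitz maps $M \to M_1 \to M_0$ and multiplying the constants gives the claim with $K = 33$; the numerical value $33$ is just the product of the explicit constants accumulated in each step, optimized but not sharp.

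The main obstacle — and the only genuinely delicate point — is the \textbf{angle control} in the second stage: an affine map from a very thin triangle to an equilateral one has enormous bilipschitz constant, so one cannot simply straighten and rescale an arbitrary triangulation. Everything hinges on first producing a triangulation (or subdivision) of $M_1$ whose triangles have a definite lower bound on their angles, with that bound good enough that $33$ comes out at the end. This is where one must be careful: the triangulation of the smooth manifold in stage one should already be taken to consist of nearly-equilateral curved triangles (e.g.\ by building it from a fine net of well-separated points and taking a Delaunay-type triangulation in each chart), so that after straightening the triangles are already close to equilateral and the final affine correction has bilipschitz constant close to $1$. I would organize the argument to make that uniform fatness explicit from the start, so that the two bilipschitz contributions — chart distortion $\lambda^2$ and shape correction — are each small, and their product is at most $33$.
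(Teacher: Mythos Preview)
Your two-stage plan matches the paper's overall architecture, and you correctly flag angle control as the crux. But there are two genuine gaps.

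First, and most seriously, you assert that ``all the equilateral target triangles can be taken to have the same size'' without explaining how. Even if every flat triangle of $M_1$ is uniformly fat, their \emph{diameters} may vary without bound, and the affine map from a triangle of diameter $d$ to an equilateral triangle of fixed side~$\ell$ has bilipschitz constant of order $\max(d/\ell,\ell/d)$. The paper handles this by repeatedly splitting the larger triangles along midpoints until $\max_j \diam(\Delta_j'') \le 2\min_j \diam(\Delta_j'')$. That subdivision, however, destroys the simplicial structure: a new vertex of one triangle may land in the interior of an edge of its neighbour. Repairing this --- showing that at most two small edges sit inside any large edge, and then bisecting the large triangle accordingly --- is where most of the factor $32$ is spent. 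Your proposal skips this entire size-uniformisation-plus-repair step, and without it the bilipschitz constant is simply uncontrolled.

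Second, your expectation that both bilipschitz contributions can be made ``small'' is in fact impossible when same-size equilateral triangles are required and $\chi(M)\neq 0$. Euler's formula $6\chi(M)=\sum_i(6-v_i)$ forces some vertex of degree $\neq 6$; at such a vertex the cone angle of $M_0$ differs from $2\pi$ by at least $\pi/3$, and no bilipschitz map to a smooth surface can have constant close to $1$ there. (The paper states this explicitly in the Remark following the proposition.) This is why the paper does not aim for a nearly-equilateral triangulation but instead invokes the Colin de Verdi\`ere--Marin theorem to obtain angles in a fixed window $(\tfrac{\pi}{4},\tfrac{3\pi}{7})$ and then pays an honest factor $8$ (becoming $32$ after the repair step) for the affine shape correction. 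Your Delaunay-in-charts suggestion would need a comparable concrete angle bound \emph{and} a mechanism for matching the triangulations across overlapping charts, neither of which is supplied.
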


\begin{remark}
If we do not require the triangles of~$M_{0}$ to have the same size, we can take the constant $K$ to be arbitrarily close to one in the previous proposition.
However, this is no longer true for surfaces with nonzero Euler caracteristic when all the triangles of~$M_{0}$ are supposed to be equilateral. 

Let us argue by contradiction.
Subdividing the triangulation of~$M_{0}$ if necessary, we can assume that the equilateral flat triangles of~$M_{0}$ are arbitrarily small.
Consider the geodesic triangulation~$\Tr$ of~$M$ isotopic to the image of the triangulation of~$M_{0}$ under the $K$-bilipschitz homeomorphism while keeping the vertices fixed.
If $K$ is close enough to one, the angles of all the triangles of~$\Tr$ are close to~$\frac{\pi}{3}$.

From Euler's formula, we have the following relation
$$
6 \chi(M) = \sum_{i} (6 - v_{i})
$$
between the Euler characteristic of~$M$ and the degrees~$v_{i}$ of the vertices of its triangulation.
If the Euler characteristic of~$M$ is nonzero, at least one vertex of the triangulation of~$M$ has its degree different from~$6$.
In this case, the angles of the triangulation~$\Tr$ of the smooth surface~$M$ cannot all be close to~$\frac{\pi}{3}$.
Hence a contradiction. 

For the two-torus and the Klein bottle, the constant $K$ in the previous proposition can be taken to be arbitrarily close to one, see \cite{cvm} and the following. 
\end{remark}

The proof of Proposition~\ref{prop:bilip} rests on the following construction.

\medskip

Let $M$ be a closed Riemannian surface.
Choose~$\varepsilon \in ]0,\frac{1}{100}[$ and $\eps' \in ]0,\varepsilon[$.
From \cite{cvm}, there exists a geodesic triangulation~$\Tr$ of $M$ such that
\begin{enumerate}
\item the diameter of every triangle of~$\Tr$ is less than~$\eps'$;
\item the angles of every triangle of~$\Tr$ are between~$\frac{2\pi}{7} -\eps'$ and~$\frac{5\pi}{14} + \eps'$.
\end{enumerate}
Denote by~$\Delta_i$ the triangles of the triangulation~$\Tr$.
By taking~$\eps'$ small enough, we can assume that every triangle~$\Delta_i$ of~$\Tr$ is almost isometric  to a triangle~$\Delta_i'$ of the Euclidean plane with the same side lengths as~$\Delta_i$.
More precisely, we can assume that

\begin{enumerate}
\item[(a)] there exists a $(1+\eps)$-bilipschitz homeomorphism between $\Delta_i$ and~$\Delta_i'$;
\item[(b)] the diameter of the triangle~$\Delta_i'$ is less than $\eps$;
\item[(c)] the angles of the triangle~$\Delta_i'$ are strictly between $\frac{\pi}{4}$ and~$\frac{3\pi}{7}$.
\end{enumerate}

Replacing the triangles~$\Delta_i$ of the triangulation~$\Tr$ of $M$ by the Euclidean triangles~$\Delta_i'$ gives rise to a piecewise flat metric with conical singularities on the surface.
The size of the Euclidean triangles making up this piecewise flat metric may vary from one triangle to another.
By subdividing some of these triangles, one can make this size more uniform.

More precisely, the segments connecting the midpoints of the sides of each Euclidean triangle~$\Delta_i'$ decompose~$\Delta_i'$ into four triangles twice smaller than~$\Delta_i'$ with the same angles as~$\Delta_i'$ (see Figure \ref{divtr}).

\begin{figure}[h]
\leavevmode \SetLabels
\L(.4*.73) $\Delta''_{i,1}$\\
\L(.3*.24) $\Delta''_{i,2}$\\
\L(.43*.5) $\Delta''_{i,3}$\\
\L(.61*.48) $\Delta''_{i,4}$\\
\endSetLabels
\begin{center}
\AffixLabels{\centerline{\epsfig{file =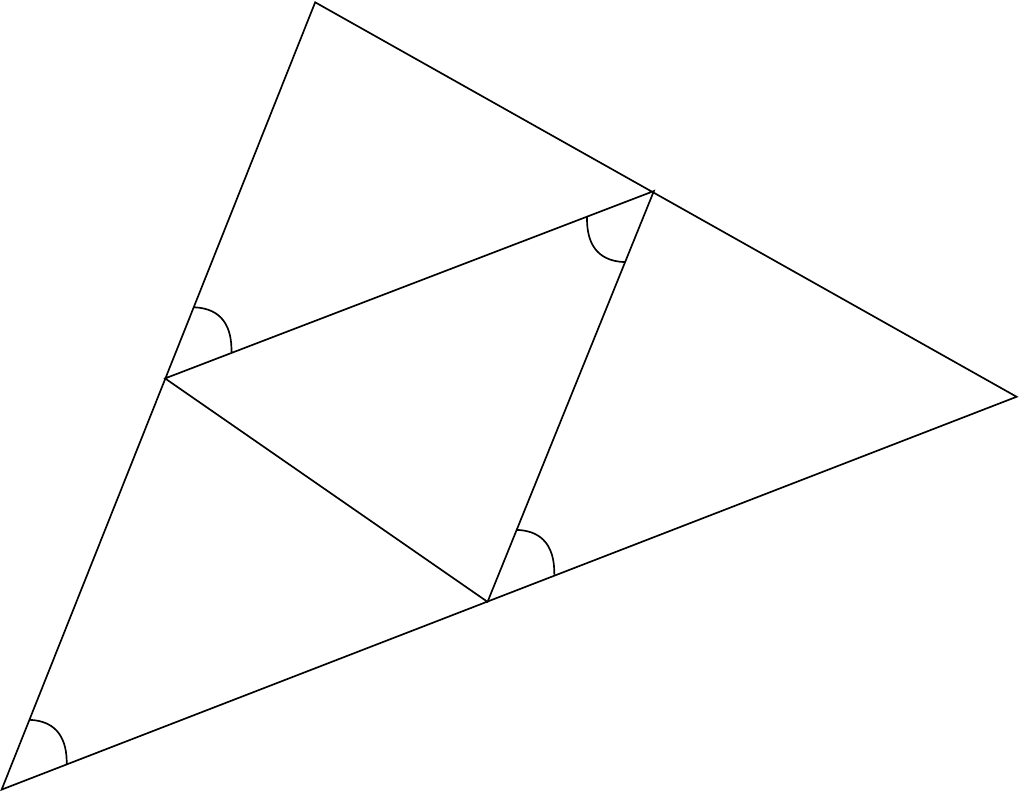,width=7cm,angle=0}}}
\end{center}
\caption{Decomposition of a triangle $\Delta'_i$} \label{divtr}
\end{figure}

By iterating this subdivision process, replacing some of the~$\Delta_i'$'s by four triangles twice smaller than~$\Delta_i'$, one can construct a new collection~$\Tr''$ of triangles~$\Delta_j''$ such that
$$
\max_j \diam (\Delta_j'')  \leq 2 \, \min_j \diam (\Delta_j'').
$$


\begin{lemma} \label{lem:bilip1}
There exists $\ell > 0$ such that every triangle of~$\Tr''$ is $8$-bilipschitz homeomorphic to the equilateral flat triangle of side length~$\ell$ through an affine map.
\end{lemma}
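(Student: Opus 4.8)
The plan is to show that each triangle in the collection $\Tr''$ has bounded eccentricity (angles bounded away from $0$ and $\pi$, hence side-length ratios bounded) and comparable size to a common scale $\ell$, and then to observe that an affine map between two Euclidean triangles of comparable size and bounded eccentricity is bilipschitz with a controlled constant. First I would fix the scale: set $\ell := \min_j \diam(\Delta_j'')$, so that by the construction preceding the lemma every triangle $\Delta_j''$ satisfies $\ell \leq \diam(\Delta_j'') \leq 2\ell$. Since the subdivision into midpoint triangles preserves angles, every $\Delta_j''$ has the same angles as the Euclidean triangle $\Delta_i'$ from which it descends, and by property~(c) those angles lie strictly between $\frac{\pi}{4}$ and $\frac{3\pi}{7}$. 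Let $T_\ell$ denote the equilateral flat triangle of side length $\ell$.

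Next I would estimate the side lengths of an arbitrary $\Delta_j''$. By the law of sines, if $\Delta_j''$ has angles $\alpha,\beta,\gamma$ and circumradius $R$, its sides are $2R\sin\alpha$, $2R\sin\beta$, $2R\sin\gamma$; since each angle lies in $(\frac{\pi}{4},\frac{3\pi}{7})$, each sine lies in a fixed interval bounded away from $0$ and from above by $1$, so all three sides of $\Delta_j''$ are comparable to one another and comparable to $\diam(\Delta_j'')$, hence to $\ell$, with explicit numerical constants. The same (trivially) holds for $T_\ell$, whose sides all equal $\ell$. Thus there is an affine map $\varphi_j \co T_\ell \to \Delta_j''$ sending vertices to vertices; I would then bound the operator norms of $d\varphi_j$ and $d\varphi_j^{-1}$. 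Writing $d\varphi_j$ in terms of two edge vectors of $T_\ell$ mapped to two edge vectors of $\Delta_j''$, the norm of $d\varphi_j$ is controlled by the lengths of the image edges divided by $\ell$, and the norm of the inverse is controlled by those ratios divided by $\sin$ of the relevant angles (the Jacobian is $\area(\Delta_j'')/\area(T_\ell)$, again comparable to $1$). Combining these gives a uniform bilipschitz constant; a direct numerical check with the angle bounds $\frac{\pi}{4}$ and $\frac{3\pi}{7}$ and the size ratio $2$ yields a constant at most $8$.

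The main obstacle is purely quantitative: one must verify that the crude estimates above actually close up below the stated constant $8$ rather than some larger value, which requires being slightly careful about how the $\diam$-to-side-length conversion, the angle bounds, and the factor-$2$ size spread combine. A clean way to organize this is to first bound separately (i) the bilipschitz distortion of an affine map between $T_\ell$ and $T_{\ell'}$ for $\ell \leq \ell' \leq 2\ell$ (this contributes a factor $2$), and (ii) the bilipschitz distortion of an affine map between two equilateral triangles of the same size and an arbitrary triangle with angles in $(\frac{\pi}{4},\frac{3\pi}{7})$ and comparable size (this contributes the remaining factor $\leq 4$), and then compose. No deep input is needed beyond the earlier normalization of angles and diameters; the lemma is essentially a bookkeeping statement packaging the output of the subdivision construction into a single uniform constant.
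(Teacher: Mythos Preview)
Your proposal is correct and follows essentially the same approach as the paper: both arguments decompose the affine map into a homothety contributing a factor at most~$2$ (from the size ratio $\max_j \diam(\Delta_j'') \leq 2\min_j \diam(\Delta_j'')$) and a shape-normalization step contributing a factor at most~$4$ (from the angle bound $>\frac{\pi}{4}$). The only cosmetic differences are that the paper sets $\ell := \max_j \diam(\Delta_j'')$ rather than the minimum, and realizes the shape-normalization concretely by projecting the vertex opposite the longest side onto its perpendicular bisector and then sliding along it, whereas you phrase the same step via operator norms of the affine differential.
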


\begin{proof}
Let $\ell = \max_j \diam (\Delta_j'')$ and $\Delta_j''$ be a triangle of~$\Tr''$. By construction, the angles of~$\Delta_j''$ strictly lie between~$\frac{\pi}{4}$ and~$\frac{3\pi}{7}$ and the length of its longest side between~$\frac{\ell}{2}$ and~$\ell$.

Consider the longest side of~$\Delta_j''$. Let $\delta$ be the perpendicular bisector of this side and $v$ be the vertex of~$\Delta_j''$ opposite to this side.
Move $v$ to its orthogonal projection on~$\delta$.
Then, move this projection along the perpendicular bisector~$\delta$ so as to obtain an equilateral flat triangle.
A straightforward computation shows that the composite of these two transformations defines a $4$-bilipschitz affine homeomorphism from~$\Delta_j''$ onto an equilateral flat triangle (recall that the angles of~$\Delta_j''$ are greater than~$\frac{\pi}{4}$).

The length of the sides of this equilateral triangle is between $\frac{\ell}{2}$ and~$\ell$.
Therefore, a homothety of ratio between $1$ and~$2$ transforms this equilateral triangle into an equilateral flat triangle of side length~$\ell$.
\end{proof}

The collection~$\Tr''$ of these triangles may not form a triangulation of~$M$ since the vertex of a triangle~$\Delta_j''$ may lie in the interior of the edge of another triangle.
However, the union of the triangles of~$\Tr''$  covers~$M$ and the interiors of two distinct triangles of~$\Tr''$ are disjoint.
Furthermore, if~$e_1$ and~$e_2$ are the edges of two triangles of~$\Tr''$ then either~$e_1$ lies in~$e_2$ or~$e_2$ lies in~$e_1$, unless the two edges are disjoint or intersect at a single point.

Let us see now how to obtain a genuine triangulation of~$M$.
For that purpose, we will need the following lemma.

\forget
\begin{lemma} \label{lem:1or2}
Consider an edge of a triangle of~$\Tr''$ maximal for inclusion.
This edge agrees with either the edge of another triangle of~$\Tr''$ or the union of the edges of exactly two other triangles of~$\Tr''$.
\end{lemma}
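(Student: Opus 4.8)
The idea is to trace the edge $e$ back to the Euclidean triangle of the triangulation $\{\Delta_i'\}$ from which its triangle descends, and to exploit that the midpoint subdivision of a triangle produces four triangles similar to it, all with sides parallel to the original ones. The first, and crucial, observation is that the construction of $\Tr''$ subdivides each $\Delta_i'$ \emph{uniformly}: a triangle of the current collection is replaced by its four midpoint pieces precisely when its diameter exceeds twice the current minimal diameter, and since the four children of a triangle all share the same diameter, the descendants of a fixed $\Delta_i'$ lying in one generation are all subdivided or all left untouched. Hence each $\Delta_i'$ is subdivided the same number $\ell_i$ of times along every branch, every triangle of $\Tr''$ coming from $\Delta_i'$ is an affine copy of $\Delta_i'$ of diameter $\diam(\Delta_i')\,2^{-\ell_i}$, and every segment of $\partial\Delta_i'$ (or of one of its midsegments) carried by such a triangle is a dyadic sub-segment of the carrying one, obtained by repeated halving.

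Next I would bound the discrepancy of the exponents of two Euclidean triangles $\Delta_a'$ and $\Delta_b'$ sharing an edge $E$. Since the angles of every triangle of $\Tr''$ lie strictly between $\pi/4$ and $3\pi/7$, the law of sines bounds the ratio of the longest to the shortest side of any such triangle — and of any $\Delta_i'$ — by $c_0:=\sin(3\pi/7)/\sin(\pi/4)<\sqrt2$. Consequently $\diam(\Delta_a')$ and $\diam(\Delta_b')$ both belong to $[\,|E|,\,c_0|E|\,]$, while the defining property $\max_j\diam(\Delta_j'')\le 2\min_j\diam(\Delta_j'')$ of $\Tr''$ forces $\diam(\Delta_a')\,2^{-\ell_a}$ and $\diam(\Delta_b')\,2^{-\ell_b}$ to differ by a factor at most $2$. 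Combining these two facts gives $2^{|\ell_a-\ell_b|}<2c_0<4$, hence $|\ell_a-\ell_b|\le 1$.

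Now let $e$ be an edge of a triangle $T\in\Tr''$ that is maximal for inclusion, and let $\Delta_a'$ be the Euclidean triangle $T$ descends from. Because all descendants of $\Delta_a'$ have sides parallel to those of $\Delta_a'$, no midsegment of a descendant lies on $\partial\Delta_a'$, so $e$ is a dyadic sub-segment of a well-defined carrying segment which is either a side $E$ of the triangulation $\{\Delta_i'\}$, shared with a triangle $\Delta_b'$, or a midsegment internal to $\Delta_a'$. On the side of $e$ opposite to $T$ the $\Tr''$-edges tile $e$ — because $\Tr''$ covers $M$ with pairwise disjoint interiors — and they are themselves dyadic sub-segments of the carrying segment, all of one common length; by the nesting property of $\Tr''$-edges already recorded and the maximality of $e$, none of them properly contains $e$, so they partition $e$ into pieces of a single dyadic length. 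If $e$ lies on a midsegment, or on a side $E$ with $\ell_a=\ell_b$, the two sides of the carrying segment induce the same partition, $e$ is a single such piece, and $e$ is the edge of exactly one other triangle of $\Tr''$. If $e$ lies on a side $E$ with, say, $\ell_b=\ell_a+1$, then the longest pieces along $E$ are those on the $\Delta_a'$ side and $e$ must be one of them; it is partitioned from the $\Delta_b'$ side by exactly two pieces of half its length, so $e$ is the union of the edges of exactly two other triangles of $\Tr''$.

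The hard part of this argument is the balancedness encapsulated in its first two steps: that the refinement is uniform inside each $\Delta_i'$, and that neighbouring $\Delta_i'$'s are subdivided to within one level of one another. Both facts rest entirely on the two hypotheses built into the definition of $\Tr''$ — the control $\max_j\diam(\Delta_j'')\le 2\min_j\diam(\Delta_j'')$ on the triangle sizes and the bound on the angles from the construction, which keeps all triangles uniformly fat — and it is precisely their combination that rules out $e$ being split into three or more pieces from the opposite side. Once this $1$-balancedness is in hand, the rest is the elementary dyadic bookkeeping sketched above, and the dichotomy of the statement follows.
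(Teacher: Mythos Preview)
Your proof is correct and rests on the same two ingredients as the paper's --- the angle bound on the triangles (giving a side-ratio bound $<\sqrt2$) and the global constraint $\max_j\diam(\Delta_j'')\le 2\min_j\diam(\Delta_j'')$ --- but you organize them differently. You track the subdivision depth $\ell_i$ of each $\Delta_i'$, argue that the depth is constant within each $\Delta_i'$ (since the four children of a midpoint subdivision share the same diameter), and then combine the two ingredients into the estimate $2^{|\ell_a-\ell_b|}<2c_0<4$ for adjacent Euclidean triangles, forcing $|\ell_a-\ell_b|\le1$; the dichotomy is then read off directly. The paper instead argues by contradiction without introducing the levels $\ell_i$: it observes that any triangle of~$\Tr''$ has longest side $<2\times$ shortest side, so if an edge~$e$ were tiled from the opposite side by four or more edges, one of those edges would have length $\le|e|/4$ and its triangle would have diameter $<|e|/2\le\tfrac12\diam(\Delta'')$, contradicting the size control on~$\Tr''$; the even-number remark rules out the case of three sub-edges.

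What your route buys is explicitness: you make precise the uniform subdivision within each $\Delta_i'$ that the paper uses tacitly when it asserts that the number of sub-edges is ``necessarily even'' (equivalently, that the sub-edges all have the same dyadic length). The paper's route is shorter --- three sentences once one grants the even-number claim --- and avoids having to fix a particular refinement algorithm. Both arguments are at bottom the same inequality $2^{\ell_b-\ell_a}<4$, just unwound from opposite ends.
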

\forgotten

\begin{lemma} \label{lem:1or2}
Consider an edge of a triangle of~$\Tr''$ which contains at least two edges of some other triangles of~$\Tr''$.
Then this edge agrees exactly with the union of these two edges.
\end{lemma}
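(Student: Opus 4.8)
The plan is to analyze the combinatorial structure of the subdivision process that produced~$\Tr''$. Fix an edge~$E$ of a triangle~$\Delta \in \Tr''$ and suppose~$E$ contains at least two edges of other triangles of~$\Tr''$. First I would recall that every triangle of~$\Tr''$ arises from some original Euclidean triangle~$\Delta_i'$ by iterating the midpoint subdivision finitely many times; hence every edge of a triangle of~$\Tr''$ is a dyadic subsegment of an edge of one of the~$\Delta_i'$. Moreover, inside a single~$\Delta_i'$ the subdivision is performed coherently, so on the side of~$\Delta_i'$ carrying~$E$ the edges of the sub-triangles lying in that side form a partition of it into dyadic subintervals coming from a subdivision tree. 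Two such subintervals are therefore either nested or have disjoint interiors, and consecutive ones meeting at a point have \emph{comparable} lengths in the sense that one is at most twice the other (a node of a binary subdivision tree is adjacent only to subintervals whose depths differ by a bounded amount when the subdivision is ``balanced'' as forced by $\max_j \diam(\Delta_j'') \le 2\min_j \diam(\Delta_j'')$).

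The key step is then the following length comparison. Since~$E$ is an edge of the triangle~$\Delta$, its length~$\length(E)$ lies between~$\tfrac{\ell}{2}$ and~$\ell$ where $\ell = \max_j \diam(\Delta_j'')$ (this is exactly the estimate on the longest side established in the proof of Lemma~\ref{lem:bilip1}, applied to~$\Delta$; and any edge of~$\Delta$ has length at least half its longest side because the angles of~$\Delta$ exceed~$\tfrac{\pi}{4}$, so in particular at least~$\tfrac{\ell}{4}$ — I would check the exact constant, but the point is that $\length(E) \gtrsim \ell$). On the other hand, each edge~$e_1, e_2, \dots$ of the other triangles contained in~$E$ has length at most~$\ell$ and at least~$\tfrac{\ell}{2}$ by the same argument. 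Because these edges tile~$E$ (they have disjoint interiors, are nested-or-disjoint, and their union is all of~$E$ since~$E$ is covered by triangles of~$\Tr''$ whose interiors are disjoint from~$\Delta$), and each has length at least~$\tfrac{\ell}{2}$ while~$\length(E) \le \ell$, there can be at most \emph{two} of them. Combined with the hypothesis that~$E$ contains at least two such edges, we conclude there are exactly two, say~$e_1$ and~$e_2$, and that~$E = e_1 \cup e_2$.

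The step I expect to be the main obstacle is the uniform lower bound~$\tfrac{\ell}{2}$ on the length of an edge of an arbitrary triangle of~$\Tr''$: one must be careful that, while the \emph{longest} side of each~$\Delta_j''$ lies between~$\tfrac{\ell}{2}$ and~$\ell$, a \emph{shorter} side could a priori be smaller, so the naive counting ``at most two edges of length~$\ge \tfrac{\ell}{2}$ fit in an edge of length~$\le \ell$'' needs the bound to hold for \emph{every} side. This is where the angle condition~(c) enters decisively: since all angles of~$\Delta_j''$ lie strictly between~$\tfrac{\pi}{4}$ and~$\tfrac{3\pi}{7}$, the law of sines forces the ratio of any two sides of~$\Delta_j''$ to lie in a bounded interval around~$1$ (roughly between~$\tfrac{1}{\sqrt2}$ and~$\sqrt2$), so in fact \emph{every} side of every triangle of~$\Tr''$ has length comparable to~$\ell$, and by adjusting constants one gets every side~$\ge c\,\ell$ with~$c$ close enough to~$1$ that at most two can fit end-to-end inside a segment of length~$\le \ell$. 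Once this is in place, the nesting/covering combinatorics of the dyadic subdivision closes the argument.
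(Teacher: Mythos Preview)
Your approach is essentially correct and rests on the same two ingredients as the paper's proof --- the angle condition~(c) controlling the ratio of sides within a triangle, and the global diameter bound $\max_j \diam(\Delta_j'') \le 2\min_j \diam(\Delta_j'')$ --- but the execution differs.

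The paper's argument is shorter. It first records that, since every angle exceeds~$\pi/4$, the longest side of any triangle of~$\Tr''$ is less than twice its shortest side. It then observes that, by the dyadic nature of the midpoint subdivision, the edge~$e$ is made up of a \emph{power of two} (hence an even number) of edges of other triangles, all of equal length. Thus if there are more than two, there are at least four, and each has length at most~$|e|/4 \le \diam(\Delta'')/4$. The side-ratio bound then forces the adjacent triangle to have diameter strictly less than~$\diam(\Delta'')/2$, contradicting $\max_j\diam(\Delta_j'') \le 2\min_j\diam(\Delta_j'')$. Note that the paper never introduces~$\ell$ here and never needs a sharp constant: the crude ratio bound ``$<2$'' suffices once one knows there are at least four equal pieces.

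Your route --- bound every side of every~$\Delta_j''$ between~$c\ell$ and~$\ell$ and count how many fit inside~$E$ --- also works, but your statement that ``$c$ is close enough to~$1$'' is wrong and would not survive a careful reading. From angles in~$(\pi/4,\pi/2)$ the law of sines gives longest/shortest~$<\sqrt{2}$, hence every side has length at least $(\ell/2)/\sqrt{2} = \ell/(2\sqrt{2}) \approx 0.354\,\ell$. This is \emph{not} close to~$1$, but it is still~$>1/3$, which is exactly what your counting needs (at most~$\lfloor 1/c\rfloor = 2$ subedges fit in~$E$). You should state and verify this inequality explicitly rather than assert a qualitative ``close to~$1$''. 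Finally, your first paragraph carefully sets up the dyadic/nested structure of the subdivision but you never actually use it; the paper does use it, and that is precisely what lets it bypass the delicate constant computation.
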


\begin{proof}
Since the angles of every triangle~$\Delta''$ of~$\Tr''$ are greater than~$\frac{\pi}{4}$,
the length of the longest side of~$\Delta''$, which agrees with its diameter, is less than twice the length of its shortest side. 
This remark is left to the reader.

Consider an edge~$e$ of a triangle~$\Delta''$ of~$\Tr''$ which contains at least two edges of some other triangles of~$\Tr''$.
By construction, the edge~$e$ is made up of a necessarily even number of edges of some other triangles of~$\Tr''$.
Assume that at least four edges of some other triangles of~$\Tr''$ lie in~$e$.
The length of these edges is at most one quarter of the length of~$e$.
Therefore, our initial remark shows that there exist some triangles of~$\Tr''$ whose diameter is less than half the diameter of~$\Delta''$.
Hence a contradiction with the construction of~$\Tr''$.
\end{proof}

If the edge~$e$ of a triangle~$\Delta''$ of~$\Tr''$ agrees with the union of the edges of two other triangles of~$\Tr''$, then we split~$\Delta''$ along the segment connecting the midpoint of~$e$ to the vertex of~$\Delta''$ opposite to that edge.
The new collection of triangles still satisfies the conclusion of the previous lemma.
We repeat this process to this new collection until we get a genuine triangulation~$\Tr_0$ of~$M$, where every edge is the side of exactly two adjacent triangles.


\begin{lemma} \label{lem:bilip2}
Every triangle of~$\Tr_0$ is $32$-bilipschitz homeomorphic to the equilateral flat triangle of side length~$\ell$ (the same~$\ell$ as in Lemma~\ref{lem:bilip1}).
\end{lemma}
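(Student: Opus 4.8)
The plan is to bound the bilipschitz distortion accumulated in passing from $\Tr''$ to the genuine triangulation $\Tr_0$, and then compose with the distortion bound from Lemma~\ref{lem:bilip1}. From Lemma~\ref{lem:bilip1}, every triangle of $\Tr''$ is $8$-bilipschitz homeomorphic, through an affine map, to the equilateral flat triangle $T_\ell$ of side length $\ell$. So it suffices to show that every triangle of $\Tr_0$ is $4$-bilipschitz homeomorphic, through an affine map, to a triangle of $\Tr''$; composing the two affine maps then yields a $32$-bilipschitz affine homeomorphism from any triangle of $\Tr_0$ onto $T_\ell$, as desired.

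The triangles of $\Tr_0$ arise from those of $\Tr''$ by the splitting operation described just before the statement: whenever an edge $e$ of a triangle $\Delta''$ coincides with the union of exactly two edges of other triangles of $\Tr''$ (the only case that occurs, by Lemma~\ref{lem:1or2}), we cut $\Delta''$ along the median from the midpoint $m$ of $e$ to the opposite vertex $v$. The first step is therefore to analyze a single such cut: it replaces $\Delta'' = vab$ (with $e = ab$ and $m$ the midpoint of $ab$) by the two triangles $vam$ and $vmb$. Each of these is the image of $\Delta''$ under an affine map of $\mathbb{R}^2$ — for instance $vam$ is the image of $vab$ under the affine map fixing $v$ and $a$ and sending $b$ to $m$, i.e. the linear part is the identity on the $v$–$a$ direction and halves the component transverse to it in a suitable shear coordinate. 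A direct computation, using that the angles of $\Delta''$ lie strictly between $\frac{\pi}{4}$ and $\frac{3\pi}{7}$ and that $e$ is a side of $\Delta''$, bounds the bilipschitz constant of this affine map by $2$ (the same kind of elementary estimate as in the proof of Lemma~\ref{lem:bilip1}). The second step is to observe that the splitting process terminates after finitely many steps and, crucially, that \emph{each triangle of $\Tr''$ is cut at most once}: a careful bookkeeping of which edges can still contain two sub-edges shows that after the first round of cuts the collection already satisfies the conclusion of Lemma~\ref{lem:1or2} in the strong form ``every edge of a triangle contains at most one edge of another triangle,'' so no triangle is subdivided twice. Hence every triangle of $\Tr_0$ is obtained from a unique triangle of $\Tr''$ by at most one affine cut, giving a bilipschitz constant at most $2$, in fact we can afford the cruder bound $4$.

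Composing: a triangle $\tau$ of $\Tr_0$ maps $4$-bilipschitzly and affinely onto a triangle $\Delta''$ of $\Tr''$, which maps $8$-bilipschitzly and affinely onto $T_\ell$; the composite is $32$-bilipschitz and affine, proving the lemma. The main obstacle I anticipate is the termination-and-multiplicity bookkeeping in the second step: one must verify that the splitting operation does not cascade, i.e. that a triangle can be forced to split at most once, and that after splitting, the edges created do not themselves violate the conclusion of Lemma~\ref{lem:1or2} in a way that triggers further cuts on the same triangle. This hinges on the uniform diameter control $\max_j \diam(\Delta_j'') \le 2\min_j \diam(\Delta_j'')$ together with the angle bounds, exactly as in the proof of Lemma~\ref{lem:1or2}; if instead the process could iterate $k$ times on a fixed triangle, one would only get a bound like $2^k \cdot 8$, so pinning down $k \le 1$ (equivalently, absorbing everything into the constant $32$) is the crux. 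The affine-map estimates themselves are routine trigonometry and I would not belabor them.
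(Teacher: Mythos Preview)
Your overall strategy---factoring the bilipschitz constant as (subdivision factor) $\times$ (Lemma~\ref{lem:bilip1} factor)---is sound, but the crucial claim that each triangle of~$\Tr''$ is cut at most once is false. A triangle~$\Delta''$ may well have two (or all three) of its edges each coinciding with the union of two smaller edges from neighbouring triangles; after the first median cut, one of the two halves still carries an original edge that must be bisected, and is therefore cut again. Thus the depth is $k\le 2$, not $k\le 1$, and a single $\Delta''$ can break into as many as four pieces. Worse, after the first cut the resulting halves no longer satisfy the angle bounds~$(\tfrac{\pi}{4},\tfrac{3\pi}{7})$---in the equilateral model they are $30$--$60$--$90$ triangles---so your per-cut estimate, which invoked those bounds, would have to be rederived for the second round. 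The bookkeeping step you rightly flag as the crux simply does not go through as stated.

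The paper avoids both difficulties by reversing the order of operations. Because the $8$-bilipschitz homeomorphism of Lemma~\ref{lem:bilip1} is affine, it sends midpoints to midpoints, so the entire iterated subdivision of~$\Delta''$ transfers to a subdivision of the equilateral triangle~$T_\ell$. There the possible pieces are exactly three explicit shapes: the equilateral triangle of side~$\ell/2$, the isosceles triangle with apex angle~$\tfrac{2\pi}{3}$ and legs~$\ell/2$, and the $30$--$60$--$90$ right triangle with hypotenuse~$\ell$. One checks directly that each of these is $4$-bilipschitz to~$T_\ell$, and composing with the factor~$8$ gives~$32$. Your approach could be repaired by allowing two rounds of cuts and redoing the second-round estimate, but transferring to~$T_\ell$ first, as the paper does, replaces the cascading analysis by a finite explicit check.
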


\begin{proof}
The ($8$-bilipschitz) affine homeomorphism of Lemma~\ref{lem:bilip1} takes segments to segments and the midpoints of segments to the midpoints of the image segments.
Therefore, we can work with an equilateral flat triangle~$\Delta$ of side length~$\ell$ instead of a triangle of~$\Tr''$.
From Lemma \ref{lem:1or2}, this triangle is split into at most four triangles through the subdivision of~$\Tr''$ into~$\Tr_0$.
These subtriangles of~$\Delta$ are isometric to one of the following
\begin{enumerate}
\item the equilateral triangle of side length $\frac{\ell}{2}$;
\item the isosceles triangle with a principal angle of~$\frac{2 \pi}{3}$ and two sides of length~$\frac{\ell}{2}$;
\item the right triangle with angles $\frac{\pi}{2}$, $\frac{\pi}{3}$ and $\frac{\pi}{6}$, and side lengths $\ell$, $\frac{\sqrt{3}}{2} \ell$ and~$\frac{\ell}{2}$.
\end{enumerate}
Since each of these triangles is $4$-bilipschitz homeomorphic to~$\Delta$, we obtain the desired result.
\end{proof}

We can now conclude.

\begin{proof}[Proof of Proposition~\ref{prop:bilip}]

\forget
From Lemma \ref{lem:1or2}, every triangle of the pseudotriangulation~$\Tr''$ is split into at most three triangles and a straightforward calculation gives that the angles 
of the triangles of~$\Tr_0$ lie between $\frac{\pi}{10}$ and~$\frac{7\pi}{10}$.
Thus, the ratio between the lengths of every pair of edges of the triangulation~$\Tr_0$ is bounded by $4$.
In particular, there exists a real~$\ell$ such that the length of every edge of~$\Tr_0$ is between $\frac{1}{16}\ell$ and $16 \ell$.
Therefore, every triangle of~$\Tr_0$ is $16$-bilipschitz to the equilateral flat triangle with side length~$\ell$.
\forgotten

Denote by $M_0$ the piecewise flat surface obtained by replacing every triangle of~$\Tr_0$ by the equilateral flat triangle with side length~$\ell$.
Putting together the bilipschitz homeomorphisms of~(a) above and Lemma~\ref{lem:bilip2} gives rise to a $(1+\eps)32$-bilipschitz homeomorphism between the Riemannian surface~$M$ and the piecewise flat surface~$M_{0}$.
\end{proof}

\begin{corollary} \label{coro:same}
The area, the diameter and the diastole of $M$ and~$M_{0}$ satisfy the following inequalities
\begin{align}
K^{-2} & \leq  \frac{\area(M_0)}{\area(M)} \leq  K^{2} \label{eq:1} \\
K^{-1} & \leq  \frac{\dias(M_0)}{\dias(M)} \leq  K  \label{eq:3}
\end{align}
where $K=33$.
\end{corollary}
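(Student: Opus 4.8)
The plan is to deduce both inequalities from the functoriality of the one-cycle space under bilipschitz maps, together with two elementary distortion facts: a $K$-bilipschitz homeomorphism changes $2$-dimensional area by at most a factor $K^2$, and $1$-dimensional length (hence the mass of one-cycles) by at most a factor $K$. Write $\phi\colon M\to M_0$ for the $K$-bilipschitz homeomorphism provided by Proposition~\ref{prop:bilip}, with $K=33$.

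\emph{Area and diameter.} Away from the finitely many conical singularities of $M_0$, both surfaces are smooth (flat, in the case of $M_0$), so $\phi$ is differentiable almost everywhere by Rademacher's theorem, and the conical points, forming a set of measure zero, may be disregarded. At every point of differentiability the differential $d\phi$ and its inverse $(d\phi)^{-1}$ have operator norm at most $K$, so the singular values of $d\phi$ lie in $[K^{-1},K]$ and its Jacobian lies in $[K^{-2},K^2]$. The change of variables formula for bilipschitz maps then gives
$$
\area(M_0)=\int_{M}|\mathrm{Jac}\,\phi|,
$$
whence \eqref{eq:1}. The same bound on the differential shows $K^{-1}\length(c)\le\length(\phi\circ c)\le K\,\length(c)$ for every Lipschitz curve $c$ of $M$, hence $K^{-1}\diam(M)\le\diam(M_0)\le K\,\diam(M)$, which accounts for the diameter.

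\emph{Diastole.} Being bilipschitz, $\phi$ pushes flat chains forward to flat chains: the map $\phi_{\#}\colon C_1(M;\kk)\to C_1(M_0;\kk)$ is continuous for the flat norm, commutes with the boundary operator, and has continuous inverse $(\phi^{-1})_{\#}$, so it restricts to a homeomorphism $\mathcal{Z}_1(M;\kk)\to\mathcal{Z}_1(M_0;\kk)$ carrying the null cycle to the null cycle (note that, $\phi$ being a homeomorphism, $M$ and $M_0$ have the same orientability, hence the same coefficients $\kk$). Moreover $\M(\phi_{\#}z)\le K\,\M(z)$ and $\M((\phi^{-1})_{\#}w)\le K\,\M(w)$, so $K^{-1}\M(z)\le\M(\phi_{\#}z)\le K\,\M(z)$ for every one-cycle $z$. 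Finally, $\phi_{\#}$ induces an isomorphism $\pi_1(\mathcal{Z}_1(M;\kk),\{0\})\to\pi_1(\mathcal{Z}_1(M_0;\kk),\{0\})$, which in view of \eqref{eq:alm} is an isomorphism between two copies of $\kk$ and therefore sends generators to generators; together with the continuity of $\phi_{\#}$ this shows that a continuous family $(z_t)_{0\le t\le 1}$ of one-cycles satisfies the conditions (D.1-2) in $M$ if and only if $(\phi_{\#}z_t)_{0\le t\le 1}$ satisfies them in $M_0$.

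It remains to combine the two parts. Given $\eps>0$, choose a family $(z_t)$ satisfying (D.1-2) in $M$ with $\sup_t\M(z_t)\le\dias(M)+\eps$; then $(\phi_{\#}z_t)$ satisfies (D.1-2) in $M_0$ and $\sup_t\M(\phi_{\#}z_t)\le K(\dias(M)+\eps)$, so $\dias(M_0)\le K\,\dias(M)$ after letting $\eps\to 0$. Running the same argument with the $K$-bilipschitz homeomorphism $\phi^{-1}\colon M_0\to M$ yields $\dias(M)\le K\,\dias(M_0)$, and the two estimates together are precisely \eqref{eq:3}. The proof is essentially bookkeeping once the distortion bounds are in hand; the one point requiring a word of care is the assertion that a bilipschitz homeomorphism induces a homeomorphism of the flat one-cycle spaces compatible with Almgren's isomorphism, which is the standard functoriality of these constructions (\cf~\cite{fed}, \cite{almgren}).
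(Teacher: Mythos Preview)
Your proof is correct and spells out in detail the argument the paper leaves implicit: the corollary is stated without proof, as an immediate consequence of Proposition~\ref{prop:bilip} via the standard distortion bounds for area and mass under a $K$-bilipschitz map and the functoriality of the one-cycle space. Your treatment of the diastole---pushing forward admissible families via $\phi_{\#}$ and using that an isomorphism of $\kk$ sends generators to generators---is exactly the intended mechanism.
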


\begin{remark}
From Corollary~\ref{coro:same}, proving a diastolic inequality 
on~$M$ amounts to proving a diastolic inequality 
on the simplicial surface~$M_0$.
Since diastolic inequalities are scale invariant, the size of the equilateral flat triangles composing the piecewise flat surface~$M_{0}$ can arbitrarily be fixed equal to~$1$.
We will always assume this is the case when we consider simplicial surfaces.
\end{remark}

\section{Upper bounds on Cheeger's constant}\label{sec:ch}

Let $M$ be a closed Riemannian surface.
Recall that the Cheeger constant~$h(M)$ of~$M$ is defined as
$$
h(M) = \inf_{D} \frac{\length(\partial D)}{\min\{\area(D),\area(M \setminus D)\}},
$$
where the infimum is taken over all the domains~$D$ of~$M$ with smooth boundary.

\begin{proposition} \label{prop:h}
Every closed Riemannian surface~$M$ (not necessarily orientable) of genus~$g \geq 0$ satisfies the following inequality
\begin{equation} \label{eq:h}
h(M) \leq \frac{\sqrt{96 \pi (g+1)}}{\sqrt{\area(M)}}.
\end{equation}
\end{proposition}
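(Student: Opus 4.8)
The plan is to establish \eqref{eq:h} by exhibiting a domain $D\subset M$ whose boundary length is small compared to the smaller of the two areas, using a packing argument by metric balls. First I would reduce to the case where $\area(M)$ equals, say, $96\pi(g+1)$ after rescaling — indeed the Cheeger constant scales like (length)/(area) $=$ (length)$^{-1}$, so the inequality is equivalent to the scale-invariant claim that if $\area(M)\leq 96\pi(g+1)$ then $h(M)\leq 1$. The key tool is the classical fact (going back to Gromov's filling paper, and used in systolic geometry) that on a closed surface of genus $g$, a ball of radius $r$ centered at any point $p$ has area at least $\pi r^2$ as long as $r$ is at most the ``filling radius''-type threshold, or more elementarily, as long as no noncontractible-in-a-suitable-sense loop of length $\leq 2r$ appears; and that one can control how long this growth persists in terms of $g$.

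The main steps, in order, would be: (1) For a fixed basepoint $p$, consider the function $A(r)=\area(B(p,r))$ and the coarea formula, which gives $A'(r)=\length(\partial B(p,r))$ for a.e.\ $r$. If for some radius $r_0$ we had $\length(\partial B(p,r_0))\leq \min\{A(r_0),\area(M)-A(r_0)\}$, we would be done with $D=B(p,r_0)$, even with constant $1$. So we may assume $\length(\partial B(p,r))> \min\{A(r),\area(M)-A(r)\}$ for all $r$ in the relevant range, i.e.\ $A'(r)>A(r)$ as long as $A(r)\leq \frac12\area(M)$. (2) Solving this differential inequality gives exponential-type lower bounds on $A(r)$, hence $A(r)$ reaches $\frac12\area(M)$ at a radius $R$ that is logarithmically small; from that point one runs the symmetric argument from the complement. (3) To turn this into the genus-dependent bound $\sqrt{96\pi(g+1)}$ rather than a logarithmic one, I would instead use the linear (not exponential) area growth estimate: a ball that is a topological disk has area $\geq \pi r^2$ (by comparison/coarea, since an embedded disk of radius $r$ has boundary length $\geq$ a constant times $r$ is too weak — one wants $A(r)\geq \pi r^2$ from the fact that $A'(r)\geq \length(\partial B(p,r))\geq$ the length of the shortest curve separating $p$ from $\partial B$, combined with an isoperimetric inequality on the disk). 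Choosing $r$ so that $\pi r^2 = \frac12\area(M)$ forces a separating curve of controlled length unless the ball fails to be a disk, and failure of the disk property at small radius can happen at most $O(g)$ times before exhausting the topology — this is where the $\sqrt{g+1}$ enters.

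Concretely, I would argue: pick $p$ and let $R$ be the supremum of radii $r$ with $A(r)\leq \tfrac12\area(M)$. For $r<R$, if $B(p,r)$ is a disk, the relative isoperimetric inequality and coarea give $A'(r)\geq c\sqrt{A(r)}$, hence $A(r)\geq (c r/2)^2$; so $R\leq \frac{2}{c}\sqrt{\area(M)/2}$. Taking $r$ slightly below $R$, the level set $\partial B(p,r)$ separates $M$ into two pieces each of area at most $\tfrac12\area(M)$, and generically at least one component $\gamma_0$ of $\partial B(p,r)$ is noncontractible or ``essential''; cutting along it drops the genus, and iterating at most $g+1$ times on the successive pieces produces the factor $\sqrt{g+1}$. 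The numerical constant $96\pi$ is then obtained by tracking the isoperimetric constant $c$ (the sharp planar value $c=2\sqrt\pi$ gives $A(r)\geq\pi r^2$) and the number of iterations.

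\textbf{Main obstacle.} The delicate point is the topological bookkeeping: controlling how the genus and the number of boundary components behave under repeatedly cutting along level curves of distance functions, and ensuring that at each stage one can genuinely find a \emph{separating} curve of controlled length giving a valid competitor for $h(M)$ (the Cheeger quotient requires $D$ and $M\setminus D$ both of positive area, and one must avoid the degenerate situation where all short level curves are inessential and the area stays concentrated). Handling the non-orientable case and the genus-$0$ base case (the sphere, where no essential curve exists and one must rely purely on the area-growth/isoperimetric estimate) uniformly, so that a single clean constant $\sqrt{96\pi(g+1)}$ covers all $g\geq 0$, is the part I expect to require the most care.
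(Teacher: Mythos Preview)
Your approach has a genuine gap. The crucial step (3) rests on the claim that if the metric ball $B(p,r)$ is a topological disk then a relative isoperimetric inequality of the form $A'(r)\geq c\sqrt{A(r)}$ holds, yielding $A(r)\geq \pi r^2$. No such curvature-free inequality is available on an arbitrary Riemannian surface: the planar isoperimetric inequality you invoke (``the sharp planar value $c=2\sqrt{\pi}$'') requires either non-positive curvature or an embedding into Euclidean space, neither of which is given. On a round sphere of small radius, for instance, disks have area strictly smaller than $\pi r^2$. Without this quadratic area growth your bound on $R$ collapses, and with it the whole derivation of the constant $\sqrt{96\pi(g+1)}$. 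The subsequent topological bookkeeping (cutting along level curves, dropping genus, iterating $g+1$ times) is also only sketched, and it is unclear how it would produce the factor $\sqrt{g+1}$ rather than, say, $g+1$; but the isoperimetric step already fails before one gets there.

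The paper's proof is entirely different and takes two lines: it combines Cheeger's inequality $\lambda_1(M)\geq h(M)^2/4$ with the Li--Yau eigenvalue bound $\lambda_1(M)\,\area(M)\leq 24\pi(g+1)$, valid for all closed (possibly non-orientable) surfaces. Solving for $h(M)$ gives exactly the stated constant. A direct geometric argument in the spirit of what you attempt does exist (Papasoglu), but it yields a worse multiplicative constant; the spectral route is both shorter and sharper, and is precisely what explains the specific value $96\pi = 4\cdot 24\pi$.
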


\begin{proof}
Let $\lambda_{1}(M)$ be the first nonzero eigenvalue of the Laplacian on~$M$.
From~\cite{cheeger}, we have
$$
\lambda_{1}(M) \geq \frac{h(M)^{2}}{4}.
$$
On the other hand, from~\cite{LY}, we have
\begin{equation} \label{eq:LY}
\lambda_{1}(M) \, \area(M) \leq 24 \pi (g+1).
\end{equation}
The combination of these two inequalities yields the desired upper bound on the Cheeger constant.
\end{proof}

\begin{remark}
For orientable surfaces, the inequality~\eqref{eq:LY} still holds by replacing the constant~$24$ by~$8$, \cf~\cite{YY}.
This leads to a better upper bound on Cheeger's constant in Proposition~\ref{prop:h}, where the constant~$96$ can be replaced by~$32$.
Thus, Proposition~\ref{prop:h} improves the multiplicative constant of an inequality established in~\cite{pap} by using a different method.

Note that the dependence of the upper bound on the genus cannot be significantly improved.
Indeed, there exist closed hyperbolic surfaces of arbitrarily large genus
 with Cheeger constant bounded away from zero, \cf~\cite{bro86} for instance.
\end{remark}

An upper bound on the Cheeger constant of Riemannian two-spheres similar to~\eqref{eq:h} can be obtained using different arguments.
Although these arguments do not lead to a better multiplicative constant, they rely on a comparison between two intermediate invariants, namely the filling radius and the invariant~${\mathcal L}$, \cf~below, which is interesting in its own right.
Further, they can be generalized in higher dimension (but the final statement is not as meaningful).
For these reasons, we still include this result even though we will not use it in the rest of this paper. \\

Let $M$ be a Riemannian two-sphere.
Define
$$
{\mathcal L}(M) = \inf_{\gamma} \length(\gamma)
$$
where $\gamma$ runs over all the simple loops dividing~$M$ into two disks of area at least~$\frac{1}{4} \area(M)$.
Clearly,
$$
h(M) \leq 4 \, \frac{{\mathcal L}(M)}{\area(M)}.
$$

We show in Proposition~\ref{prop:ell} that the invariant~${\mathcal L}$ provides a lower bound on the filling radius on every Riemannian two-sphere.

\begin{definition} \label{def:fr}
Let~\mbox{$i:M \hookrightarrow L^{\infty}(M)$} be the Kuratowski distance preserving embedding defined by $i(x)(.)=d_{M}(x,.)$.
The \emph{filling radius} of~$M$, denoted by~$\FR(M)$, is defined as the infimum of the positive reals~$r$ such that the homomorphism $H_{n}(M;\Z) \longrightarrow H_{n}(U_{r};\Z)$ induced by the embedding of~$M$ into the $r$-tubular neighborhood~$U_{r}$ of its image~$i(M)$ in~$L^{\infty}(M)$ is trivial.
\end{definition}

Recall that a lower bound on the filling radius provides bounds on the area and the diameter of Riemannian two-spheres since $\FillRad(M) \leq \sqrt{\area(M)}$ from~\cite{gro83} and $\FillRad(M) \leq \frac{1}{3} \diam(M)$ from~\cite{Ka83}.

\begin{proposition} \label{prop:ell}
Let $M$ be a Riemannian two-sphere.
Then,
$$ {\mathcal L}(M) \leq 6 \, \FR(M). $$
In particular,
$$ h(M) \leq \frac{24}{\sqrt{\area(M)}}. $$
\end{proposition}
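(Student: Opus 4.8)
The plan is to bound $\mathcal{L}(M)$ by $6\,\FR(M)$ by working directly with the Kuratowski embedding $i:M\hookrightarrow L^\infty(M)$ and a filling of $i(M)$. Set $r=\FR(M)+\eps$ for arbitrary small $\eps>0$; then there is a Lipschitz chain $A$ in the $r$-neighborhood $U_r$ of $i(M)$ with $\partial A=i(M)$. First I would push everything back onto $M$ via a nearest-point-type retraction: for a point in $U_r$ at distance $\le r$ from $i(M)$, associate a point of $M$ realizing (almost) that distance. More precisely, I would use $A$ to produce, for each value of a sweeping parameter, a sub-multi-loop of a one-parameter family sweeping out $M$, exploiting that $M$ is a two-sphere so that cutting a simple loop produces two disks. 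The key geometric input is that any two points $x,y\in M$ whose images lie in the same small region of $U_r$ satisfy $d_M(x,y)\le 2r$ — a Lipschitz-type estimate forced by the definition of $i$ and the fact that $A$ lives within $r$ of $i(M)$.

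The heart of the argument is a minimax / sweep-out construction. I would sweep out the filling $A$ by level sets of a suitable "distance from $i(M)$" function (or a generic linear-type functional on $L^\infty(M)$ composed with the filling), obtaining a one-parameter family of one-cycles in $A$ that starts and ends trivially and is homologically nontrivial in the appropriate relative sense; projecting back to $M$ gives a family $(z_t)$ sweeping out the sphere in the sense of conditions (D.1)-(D.2). At some intermediate parameter $t_0$ the cycle $z_{t_0}$ must separate $M$ into two disks, each of area at least $\tfrac14\area(M)$ — this balancing is achieved by choosing $t_0$ so that one of the two complementary regions first reaches half the area, and then (if that region is disconnected) replacing $z_{t_0}$ by one of its indecomposable components, using that on $S^2$ a simple loop bounds disks on both sides and an area count forces a single component to carry at least a quarter of the total area. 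The length of this component is then controlled: each loop appearing has diameter $\le 2r$ after projection, and a diameter-$\le 2r$ simple loop on a surface has length $\le$ a universal multiple of $r$; tracking constants should give $\le 6r$, whence $\mathcal{L}(M)\le 6r=6(\FR(M)+\eps)$, and letting $\eps\to 0$ finishes the first inequality.

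The second inequality is then immediate: combining $h(M)\le 4\,\mathcal{L}(M)/\area(M)$ (stated just before the proposition) with $\mathcal{L}(M)\le 6\,\FR(M)$ and the bound $\FR(M)\le\sqrt{\area(M)}$ from~\cite{gro83} gives
\[
h(M)\le \frac{24\,\FR(M)}{\area(M)}\le \frac{24\,\sqrt{\area(M)}}{\area(M)}=\frac{24}{\sqrt{\area(M)}}.
\]

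The main obstacle I anticipate is making the sweep-out of the filling $A$ genuinely give a family of one-cycles on $M$ meeting the separating/area-balancing requirement with honest control on the constant: the projection from $U_r$ back to $M$ is only coarsely Lipschitz, so one must argue carefully that the pushed-forward cycles remain rectifiable with the claimed diameter bound, and that the homological nontriviality survives the projection. Getting the numerical constant down to exactly $6$ (rather than some larger universal constant) will require an efficient choice of the filling's sweep-out and a tight diameter-to-length estimate for the separating loop; this bookkeeping, rather than any conceptual difficulty, is where the work lies.
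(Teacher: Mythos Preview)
Your proposal has a genuine gap at its core. The claim that ``a diameter-$\le 2r$ simple loop on a surface has length $\le$ a universal multiple of $r$'' is false: a simple loop contained in a metric ball of radius $2r$ can have arbitrarily large length (just make it wiggle). So even if you succeeded in producing a separating loop of small diameter, you would have no control on its length, and the constant $6$ (or any constant) could not be recovered this way. Relatedly, the ``nearest-point-type retraction'' from $U_r\subset L^\infty(M)$ back to $M$ is not a well-defined continuous map, so pushing forward a sweep-out of the filling $A$ to an honest one-parameter family of rectifiable one-cycles on $M$ with controlled mass is not something you can do directly; the coarse distance estimate $d_M(x,y)\le 2r$ for nearby points only controls distances between \emph{vertices}, not lengths of projected curves.

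The paper's argument is the standard Gromov filling-obstruction, run in the opposite direction. One argues by contradiction: assume $\FR(M)<r<r+\eps<\tfrac16\mathcal{L}(M)$, take a $3$-complex $P$ with a map $\sigma:P\to U_r$ whose restriction to $\partial P$ represents the fundamental class of $i(M)$, and then \emph{extend} $\sigma|_{\partial P}$ to a map $f:P\to i(M)$ skeleton by skeleton. Vertices go to nearest points; edges go to minimizing segments (length $<2r+\eps=:\rho$); the boundary of each $2$-simplex then maps to a loop of length $<3\rho<\mathcal{L}(M)$, which \emph{by definition of $\mathcal{L}$} bounds a disk of area $<\tfrac14\area(M)$. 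Filling $2$-simplices with these small disks, each $3$-simplex boundary maps to $M$ with image of area $<\area(M)$, hence degree zero, so the extension goes through and contradicts nontriviality of the fundamental class. The invariant $\mathcal{L}$ enters precisely as the threshold below which short loops bound \emph{small-area} disks; no length-versus-diameter estimate is needed or available. Your derivation of the second inequality from the first is correct.
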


\begin{remark}
Similar lower bounds on the filling radius have been proved in~\cite{gro83} for essential manifolds, where ${\mathcal L}$ is replaced with the systole, and in~\cite{Sa04} on the two-sphere, where ${\mathcal L}$ is replaced with the shortest length of a geodesic loop.
\end{remark}

\begin{proof}
We argue by contradiction.
Pick two reals $r$ and $\eps$ such that
$$\FillRad(M) < r < r+\eps <\frac{1}{6} {\mathcal L}(M).$$
Let~\mbox{$i:M \hookrightarrow L^{\infty}(M)$} be the Kuratowski distance preserving embedding defined by $i(x)(.)=d_{M}(x,.)$.
By definition of the filling radius, there exists a map $\sigma:P \to U_{r}$ from a $3$-complex~$P$ to the $r$-tubular neighborhood $U_{r}$ of~$i(M)$ in~$L^{\infty}=L^{\infty}(M)$ such that
$\sigma_{|\partial P}: \partial P \to i(M)$ represents the fundamental class of~$i(M) \simeq M$ in $H_{2}(i(M);\Z) \simeq H_{2}(M;\Z)$.

Let us show that the map $\sigma_{|\partial P}: \partial P \to i(M)$ extends to a map \mbox{$f:P \to i(M)$}.
This would yield the desired contradiction since the fundamental class of~$i(M)$, represented by~$\sigma_{|\partial P}$, is nonzero.

Deforming $\sigma$ and subdividing $P$ if necessary, we can assume that $\sigma$ takes every edge of~$P$ to a minimizing segment and that the diameter of the $\sigma$-image of every simplex in~$P$ is less than~$\eps$.
We define $f$ to agree with~$\sigma$ on the $2$-complex~$\partial P$.

We now define $f$ on the $0$-skeleton of~$P \setminus \partial P$ by sending each vertex~$p$ of~$P \setminus \partial P$ to a point~$f(p)$ on~$i(M)$ closest to~$\sigma(p)$.
Thus, $d_{L^{\infty}}(f(p),\sigma(p)) < r$ for every vertex~$p$ of~$P$.
Since the embedding~$i$ preserves the distances, every pair of adjacent vertices $p,q \in P$ satisfies
\begin{eqnarray*}
d_{i(M)}(f(p),f(q)) & = & d_{L^{\infty}}(f(p),f(q)) \\
 & \leq & d_{L^{\infty}}(f(p),\sigma(p)) + d_{L^{\infty}}(\sigma(p),\sigma(q)) + d_{L^{\infty}}(\sigma(q),f(q)) \\
 & < & 2r+\eps=:\rho
\end{eqnarray*}

We extend $f$ to the $1$-skeleton of~$P$ by mapping each edge of~$P \setminus \partial P$ to a minimizing segment joining the images of the endpoints.
By construction, the image under~$f$ of the boundary of every $2$-simplex~$\Delta^{2}$ of~$P$ is a simple loop~$\gamma_{\Delta^{2}}$ of length less than~$3\rho$.
Since $3\rho < {\mathcal L}(M)$ for $\eps$ small enough, the simple loop~$\gamma_{\Delta^{2}}$ bounds a disk~$D_{\Delta^{2}}$ in~$i(M) \simeq M$ of area less than~$\frac{1}{4} \area(M)$.
We extend $f$ to the $2$-skeleton of~$P$ by mapping each $2$-simplex~$\Delta^{2}$ to the disk~$D_{\Delta^{2}}$.
Thus, the restriction $\partial \Delta^{3} \to i(M)$ of $f$ to the boundary~$\partial \Delta^{3}$ of a $3$-simplex~$\Delta^{3}$ of~$P$ is a degree zero map between two spheres since the area of its image is less than the area of~$i(M) \simeq M$.
Therefore, $f$ extends to each simplex of~$P$ and gives rise to an extension $f:P \to i(M)$ of~$\sigma_{|\partial P}$.
\end{proof}

\section{Isoperimetric inequalities on simplicial surfaces} \label{sec:ch2}

We will need the following remark.
Let $\eps>0$.
Every piecewise flat metric with conical singularities on a surface can be smoothed out at its singularities into a Riemannian metric $(1+\eps)$-bilipschitz homeomorphic to it.
Therefore, Proposition~\ref{prop:h} still holds true for closed simplicial surfaces. \\

We prove now that the proposition~\ref{prop:h} still holds in a simplicial setting.

\begin{proposition} \label{prop:split}
Let $M_{0}$ be a closed simplicial surface (not necessarily orientable) of genus~$g \geq 0$ formed of~$N$ triangles.
The surface~$M_{0}$ decomposes into two simplicial domains, $D_{1}$ and~$D_{2}$, with disjoint interiors, satisfying the following inequality for $i=1,2$
\begin{equation} \label{eq:split}
\length(\partial D_{i}) \leq C_{0} \sqrt{g+1} \, \frac{\min\{\area(D_{1}),\area(D_{2})\}}{\sqrt{\area(M_{0})}},
\end{equation}
where $C_{0}=15 \sqrt{96 \pi}$.
\end{proposition}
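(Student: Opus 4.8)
The plan is to deduce this simplicial statement from its smooth counterpart, Proposition~\ref{prop:h}, by a rounding procedure. Normalize the equilateral triangles of~$M_0$ to have unit side length, so that $\area(M_0)=\tfrac{\sqrt3}{4}N$, every edge has length~$1$, and $\length(\partial D_i)$ is just the number of edges separating $D_1$ from~$D_2$. By the smoothing remark preceding the statement, Proposition~\ref{prop:h} applies to~$M_0$: for every $\delta>0$ there is a domain $D\subset M_0$ with smooth boundary such that $\area(D)\le\tfrac12\area(M_0)$ and $\length(\partial D)\le(1+\delta)\,\frac{\sqrt{96\pi(g+1)}}{\sqrt{\area(M_0)}}\,\area(D)$. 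Replacing~$D$ by a connected component of least isoperimetric ratio (which keeps $\area(D)\le\tfrac12\area(M_0)$ and the displayed inequality) and perturbing~$\partial D$ to meet the $1$-skeleton transversally, we may assume in addition that $D$ is connected and that $\length(\partial D)=\sum_{\Delta}\length(\partial D\cap\operatorname{int}\Delta)$, with~$\Delta$ running over the triangles of~$M_0$.

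I would then round~$D$ to the simplicial domain $D_1:=\bigcup\{\Delta:\area(\Delta\cap D)\ge\tfrac12\area(\Delta)\}$, with $D_2:=\overline{M_0\setminus D_1}$; the common boundary $\partial D_1=\partial D_2$ consists exactly of the edges~$e$ whose two adjacent triangles $\Delta,\Delta'$ satisfy $\area(\Delta\cap D)\ge\tfrac12\area(\Delta)>\area(\Delta'\cap D)$. Two estimates are needed. \emph{First}, $\length(\partial D_1)\le\frac{3}{c_1}\length(\partial D)$ for a universal $c_1>0$: for each boundary edge~$e$ its two adjacent triangles form a $60^\circ$--$120^\circ$ rhombus~$R_e$ in which~$D$ occupies an area between $\tfrac14$ and $\tfrac34$ of~$\area(R_e)$, so the relative isoperimetric inequality in the convex set~$R_e$ bounds $\length(\partial D\cap\operatorname{int}R_e)$ below by $c_1$ (estimate the relative isoperimetric profile of the rhombus by a circular arc at the $\tfrac{\pi}{3}$-vertex); since a point of~$\partial D$ lies in the interior of at most three such rhombi (one per edge of the triangle containing it), summing over the edges of~$\partial D_1$ gives the bound. \emph{Second}, $\min\{\area(D_1),\area(D_2)\}\ge(1-\kappa\sqrt{(g+1)/N})\,\area(D)$ for a universal~$\kappa$: if $\Delta\notin D_1$ then $\area(\Delta\cap D)<\tfrac12\area(\Delta)$, so the relative isoperimetric inequality in the equilateral triangle gives $\length(\partial D\cap\operatorname{int}\Delta)\ge c_2\sqrt{\area(\Delta\cap D)}\ge c_2\,\area(\Delta\cap D)/\sqrt{\tfrac12\area(\Delta)}$; summing over $\Delta\notin D_1$, using $\area(D)-\area(D_1)\le\sum_{\Delta\notin D_1}\area(\Delta\cap D)$, the bound on $\length(\partial D)$ and $\area(M_0)=\tfrac{\sqrt3}{4}N$, one gets $\area(D)-\area(D_1)\le\kappa\sqrt{(g+1)/N}\,\area(D)$, and the same computation with $D$ replaced by $M_0\setminus D$ — whose rounding at threshold $\tfrac12$ is precisely~$D_2$ — controls $\area(D_2)$ identically.

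Now split according to~$N$. When $N\ge c_3(g+1)$ for a suitably large universal~$c_3$, the second estimate forces $\area(D)\le\tfrac{15c_1}{3(1+\delta)}\min\{\area(D_1),\area(D_2)\}$, and chaining gives
\[
\length(\partial D_1)\ \le\ \frac{3}{c_1}\,\length(\partial D)\ \le\ \frac{3(1+\delta)}{c_1}\cdot\frac{\area(D)}{\min\{\area(D_1),\area(D_2)\}}\cdot\frac{\sqrt{96\pi(g+1)}}{\sqrt{\area(M_0)}}\,\min\{\area(D_1),\area(D_2)\},
\]
so one only has to check that the coefficient $\tfrac{3(1+\delta)}{c_1}\cdot\tfrac{\area(D)}{\min}$ is at most~$15$, which holds once $c_3$ is large (making $\tfrac{\area(D)}{\min}$ close to~$1$) and $\delta$ small, using the lower estimate for~$c_1$. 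When $N<c_3(g+1)$ the target inequality is extremely loose: one takes \emph{any} simplicial partition $(D_1,D_2)$ with $\min\{\area(D_1),\area(D_2)\}\ge\tfrac14\area(M_0)$, obtained by adding triangles one at a time until the area first crosses~$\tfrac14\area(M_0)$, and then $\length(\partial D_1)\le\tfrac32N=2\sqrt3\,\area(M_0)\le C_0\sqrt{g+1}\,\tfrac14\area(M_0)/\sqrt{\area(M_0)}$ precisely because $\area(M_0)=\tfrac{\sqrt3}{4}N$ is then small compared with~$g+1$. The residual bookkeeping is to pick $c_3$, and to optimize the relative isoperimetric constants of the equilateral triangle and the rhombus, so that the two ranges of~$N$ overlap and the cumulative loss in passing from the smooth constant~$\sqrt{96\pi}$ to the simplicial one is the factor~$15$; this is routine but is where the numerical value of~$C_0$ comes from.

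The delicate step is the \emph{second} estimate, the area control. A priori the Cheeger‑minimizing domain~$D$ could be spread thinly across a huge number of triangles without occupying at least half of any of them, so that the naive rounding would return an empty or negligible~$D_1$; the mechanism ruling this out is exactly the relative isoperimetric inequality inside the ``thin'' triangles — spreading~$D$ over many triangles makes~$\partial D$ long, contradicting the near‑minimality of~$D$ as soon as~$N$ is large compared with~$g$. Everything else (the boundary estimate via the rhombi, the passage from a smooth to a simplicial domain, absorbing the perturbation parameter~$\delta$) is standard once this point is in hand.
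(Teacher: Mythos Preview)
Your approach is correct and genuinely different from the paper's. Both start from the smooth Cheeger bound (Proposition~\ref{prop:h}) on the smoothed surface and then round to a simplicial decomposition, but the rounding mechanisms differ. The paper keeps the cutting curve~$\delta$ and \emph{deforms} it into the $1$-skeleton: on each triangle it first curve-shortens each arc of~$\delta$ to a straight chord, then slides the chord's endpoints to the nearest vertices. This yields a factor~$2$ for the length (Lemma~\ref{lem:gamma'}) and a factor~$\tfrac12$ for the areas (Lemma~\ref{lem:DD}, via the planar isoperimetric inequality on short arcs and the trivial bound $\area(\Delta)$ on long ones), giving a combined loss $4(1+\eps)$; the value $C_0=15\sqrt{96\pi}$ is then forced by the small-$N$ case, where the paper simply takes $D_1$ to be a single triangle. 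Your majority-vote rounding is more combinatorial: the rhombus isoperimetric bound replaces the length estimate and the triangle isoperimetric bound replaces the area estimate. With the sharp constants ($c_1=\sqrt{\pi\sqrt3/12}\approx0.673$ for the rhombus at area $\tfrac14\area(R_e)$, and $c_2=\sqrt{2\pi/3}$ for the triangle) the two regimes of~$N$ do overlap comfortably and the factor~$15$ survives; you should, however, actually record these constants rather than leave them as ``routine'', since the reader has no way to see that the ranges of~$N$ meet without the numbers. What your approach buys is a cleaner, coordinate-free rounding rule; what the paper's buys is an explicit continuous deformation from~$\delta$ to the simplicial boundary (potentially reusable for homotopy arguments) and somewhat simpler constants. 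One inessential step you can drop: passing to a connected component of~$D$ is never used downstream.
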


\begin{proof}
Assume that $N \geq N_*$ where $N_*:=10 \cdot 96 \pi (g+1)$.
Fix $\eps \in]0,\frac{1}{50}[$.
Since $M_{0}$ is composed of~$N$ equilateral flat triangles (of side length~$1$), the area of~$M_{0}$ is equal to
\begin{equation} \label{eq:N}
\area(M_{0}) = \frac{\sqrt{3}}{4} N.
\end{equation}
From the remark at the beginning of this section, the surface~$M_{0}$ decomposes into two (not necessarily simplicial) domains $D_{1}$ and~$D_{2}$ with common boundary~$\delta$ such that
\begin{equation} \label{eq:prov}
\length(\delta) < (1+\eps) \, \sqrt{96 \pi} \, \sqrt{g+1} \, \frac{\min\{\area(D_{1}),\area(D_{2})\}}{\sqrt{\area(M_{0})}}.
\end{equation}
Combined with the formula~\eqref{eq:N} and the bound~$N \geq N_*$, this estimate yields
\begin{equation} \label{eq:1/2}
\length(\delta) < \frac{1}{2} \, \area(D_{j}) \text{ for } j=1,2.
\end{equation}
We want to deform~$\delta$ into a one-cycle lying in the one-skeleton of~$M_{0}$ while controlling its length and the area of the two domains it bounds.

Without loss of generality, we can suppose that
\begin{enumerate}
\item $\delta$ is composed of finitely many simple loops;
\item each of these loops meets the edges of the triangulation of~$M_0$;
\item each of these loops intersects the edges of the triangulation at finitely many points;
\item none of these loops passes through the midpoints of the edges of the triangulation.
\end{enumerate}

In this situation, the one-cycle~$\delta$ decomposes into a finite union of subarcs~$\tau_i$, where $i \in I$, such that each arc~$\tau_i$ is contained into a triangle~$\Delta$ of the triangulation of~$M_0$ with its endpoints lying in the boundary~$\partial \Delta$ of~$\Delta$.
For every $i \in I$, denote by~$\ell_i$ the length of~$\tau_i$.
We have
$$
\length(\delta) = \sum_{i \in I} \ell_i.
$$

For every triangle~$\Delta$ of the triangulation of~$M_0$, we apply the curve-shortening process defined in~\cite[\S2.2]{Sa04} to the finite collection of arcs~$\tau_i$ lying in the convex domain~$\Delta$.
Through this curve-shortening process, the arcs~$\tau_i$ remain disjoint, do not self-intersect and converge to the segments~$s_i$ of ~$\Delta$ with the same endpoints.
(Strictly speaking the convergence property has been stated for bumpy metrics but it still holds in the flat case since every couple of points of~$\Delta$ are connected by a unique geodesic.)
Now, we extend this process by deforming simultaneously every segment~$s_i$ into a vertex or an edge of the triangulation through a family of segments whose endpoints move along the sides of~$\Delta$ to the closest vertices of~$\Delta$ (see figure~\ref{deform}).

\begin{figure}[h]
\begin{center}
\AffixLabels{\centerline{\epsfig{file =deform.pdf,width=12cm,angle=0}}}
\end{center}
\caption{} \label{deform}
\end{figure}

This deformation~$(\delta_t)$ of~$\delta$ can be performed so that the loops forming the one-cycles~$\delta_t$ remain simple, except possibly for the final loop~$\delta'=\delta_\infty$.
This defines our deformation process.

The final one-cycle~$\delta'$ lies in the one-skeleton of~$M_0$.
Therefore, the two domains bounded by~$\delta_t$ converge to two simplicial domains $D'_1$ and $D'_2$ of~$M_0$ whose common boundary lies in~$\delta'$.

\begin{lemma} \label{lem:gamma'}
We have
$$
\length(\delta') \leq 2 \, \length(\delta).
$$
\end{lemma}

\begin{proof}
Let $i \in I$.
By definition, the arc~$\tau_{i}$, of length~$\ell_{i}$, lies in a triangle~$\Delta$ of the triangulation of~$M_{0}$ and $\length(s_i)\leq \ell_i$.
Through the deformation process, the segment~$s_i$ deforms into either a vertex or an edge of~$\Delta$.
The latter case occurs when the two endpoints of~$s_{i}$ are at distance less than~$\frac{1}{2}$ from two distinct vertices of~$\Delta$.
Thus, the segment~$s_{i}$ deforms into a path of length at most twice the length of~$s_{i}$.
(The limit case occurs when $s_{i}$ joins the midpoints of two edges of~$\Delta$.)
Summing up over all the $i \in I$ yields the desired inequality.
\end{proof}

\begin{lemma} \label{lem:DD}
For $j=1,2$, we have
\begin{equation} \label{eq:DD}
\area(D'_j) \geq \frac{1}{2} \, \area(D_{j}).
\end{equation}
\end{lemma}

\begin{proof}
The intersection $D_{j} \cap \Delta$ between $D_{j}$ and a triangle~$\Delta$ of the triangulation of~$M_{0}$ is formed of domains bounded by the arcs~$\tau_{i}$ and the edges of~$\Delta$.
Consider $i \in I$ such that $\tau_{i}$ is an arc of~$\Delta$.

Suppose that the length~$\ell_{i}$ of~$\tau_{i}$ is less than $\frac{1}{2}$.

If the endpoints of~$\tau_{i}$ lie in the same edge~$e$ of~$\Delta$, the arc~$\tau_{i}$ deforms either to this edge or to an endpoint of this edge through the deformation process previously defined (see figure~\ref{deform}).
And so does the region of~$\Delta$ bounded by~$\tau_{i}$ and the edge~$e$.

If the endpoints of~$\tau_{i}$ lie in two adjacent edges $e_{1}$ and~$e_{2}$ of~$\Delta$, the endpoints of~$\tau_{i}$ move to the same vertex~$v$ of~$\Delta$ through the deformation process (recall that the length of~$\tau_{i}$ is less than~$\frac{1}{2}$).
Thus, the arc~$\tau_{i}$ deforms into this vertex and the region of~$\Delta$ bounded by~$\tau_{i}$ and the edges $e_{1}$ and~$e_{2}$ retracts onto the vertex~$v$.

In both cases, the area of this region of~$\Delta$ bounded by~$\tau_{i}$ and either the edge~$e$ or the edges $e_{1}$ and~$e_{2}$ is less or equal to $\frac{3}{2 \pi} \ell_{i}^{2}$.
This isoperimetric inequality comes from the standard isoperimetric inequality on the plane combined with a symmetry argument.

Suppose now that the length~$\ell_{i}$ of~$\tau_{i}$ is at least $\frac{1}{2}$.
In this case, a rougher estimate holds.
Specifically, the area of each of the two domains of~$\Delta$ bounded by~$\tau_{i}$ is less than the area of~$\Delta$, \ie, $\frac{\sqrt{3}}{4}$. \\

Therefore, by substracting up the area change of~$D_{j} \cap \Delta$ through the deformation process for every triangle of the triangulation, we obtain
\begin{eqnarray}
\area(D'_{j}) & \geq & \area(D_{j}) - \sum_{i \in I^{-}} \frac{3}{2 \pi} \ell_{i}^{2} - \sum_{i \in I^{+}} \frac{\sqrt{3}}{4} \nonumber \\
              & \geq & \area(D_{j}) - \sum_{i \in I^{-}} \frac{3}{4 \pi} \ell_{i} - \sum_{i \in I^{+}} \frac{\sqrt{3}}{2} \ell_{i} \nonumber \\
              & \geq & \area(D_{j}) - \frac{\sqrt{3}}{2} \left( \sum_{i \in I} \ell_{i} \right) \label{eq:D'}
\end{eqnarray}
where the first sum is over all the indices~$i$ for which $\ell_{i} < \frac{1}{2}$ and the second over those for which $\ell_{i} \geq \frac{1}{2}$.

From the inequality~\eqref{eq:1/2}, namely
\begin{equation*}
\length(\delta) = \sum_{i \in I} \ell_{i} < \frac{1}{2} \area(D_{j}),
\end{equation*}
we derive
\begin{equation*}
\area(D'_{j}) \geq \frac{1}{2} \area(D_{j}).
\end{equation*}
\end{proof}

From the inequality~\eqref{eq:prov} and the lemmas~\ref{lem:gamma'} and~\ref{lem:DD}, we deduce
$$
\length(\delta') < 4\, (1+\eps) \, \sqrt{96 \pi} \, \sqrt{g+1} \, \frac{\min\{\area(D'_{1}),\area(D'_{2})\}}{\sqrt{\area(M_{0})}}.
$$

When $N \leq N_*$, we can take for~$D_{1}$ any triangle of~$M_{0}$ and for~$D_{2}$ the union of the remaining triangles.
Thus,
$$
\length(\partial D_{i}) = 3 \quad \text{ and } \quad \min \{\area(D_{1}),\area(D_{2}) \} =\frac{\sqrt{3}}{4}.
$$
Using the formula~\eqref{eq:N} and the bound $N \leq N_*$, one can check that the inequality~\eqref{eq:split} is satisfies in this case too.
Hence the conclusion.
\end{proof}

\section{Diastole comparison} \label{sec:max}

Let~$M_0$ be a simplicial closed surface decomposed into two simplicial domains $D_1$ and~$D_2$ with disjoint interiors.
Denote by~$\delta$ the common boundary of the domains $D_1$ and~$D_2$.
Consider the abstract simplicial cones over the connected components of~$\partial D_i$ composed of triangles with basis the edges of~$\delta=\partial D_i$.
Denote by $M_i$ the simplicial closed surface obtained by attaching these simplicial cones to~$D_i$ along~$\partial D_i$.

The following estimate on the (technical) diastole, \cf~Definition~\ref{def:dias}, follows from a cut-and-paste argument.

\begin{proposition} \label{prop:max}
We have
\begin{equation}
\dias'(M_0) \leq \max_i \dias'(M_i) + \length(\delta).
\end{equation}
\end{proposition}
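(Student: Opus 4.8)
### Proof Plan

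The plan is to construct, starting from near-optimal sweepouts of $M_1$ and $M_2$, an explicit sweepout of $M_0$ whose maximal mass is controlled by $\max_i \dias'(M_i) + \length(\delta)$. Fix $\eps > 0$ and choose families $(z^1_t)$ and $(z^2_t)$ of one-cycles on $M_1$ and $M_2$ respectively, each satisfying conditions (D.1--5), with $\sup_t \M(z^i_t) \leq \dias'(M_i) + \eps$. The geometric picture is that $M_0$ is obtained from $M_1$ and $M_2$ by removing the simplicial cones $\Cu_i = M_i \setminus D_i$ and gluing the two copies of $D_i$ along their common boundary $\delta$. So I would first arrange, by a preliminary homotopy of the sweepouts within their homotopy classes, that each $(z^i_t)$ passes through the one-cycle $\delta$ (viewed inside $M_i$ as $\partial D_i$) at some parameter value, and that it does so in a controlled way: for $t$ in an initial subinterval the cycle $z^i_t$ sweeps out the cone $\Cu_i$, ending at $\delta$, and for $t$ in the remaining subinterval it sweeps out $D_i$ starting from $\delta$. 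This is possible because $\Cu_i$ is a union of simplicial cones (topologically disks/spheres pieces), so it can be swept by short cycles — in fact the relevant part is that the cone can be swept out at all; its contribution to the mass is already accounted for inside $\dias'(M_i)$.

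The main construction is then a concatenation. The sweepout of $M_0$ proceeds in two phases. In the first phase I run the $D_1$-part of the sweepout of $M_1$: starting from $\delta = \partial D_1$ and sweeping $D_1$ until the cycle reaches $0$ (the null cycle at the far "tip" side of the original $M_1$ sweepout, after it has crossed $D_1$). Reading this backwards gives a family from $0$ to $\delta$ supported in $D_1 \subset M_0$. In the second phase I run the $D_2$-part of the sweepout of $M_2$: a family from $\delta$ to $0$ supported in $D_2 \subset M_0$. Concatenating, I get a family $(z_t)$ on $M_0$ from $0$ to $0$, passing through $\delta$ at the junction. During phase one the cycle equals some $z^1_{s(t)}$ truncated to $D_1$, which has mass $\leq \M(z^1_{s(t)}) \leq \dias'(M_1)+\eps$; similarly phase two is bounded by $\dias'(M_2)+\eps$; and at the junction the cycle is exactly $\delta$, of mass $\length(\delta)$. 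The extra $\length(\delta)$ term in the statement is the slack needed to interpolate: when I truncate $z^i_t$ to $D_i$ I must close it up along $\delta$ (or a portion of it), so the truncated cycle is really $(z^i_t \cap D_i) + (\text{a piece of } \delta)$, and its mass is at most $\M(z^i_t) + \length(\delta)$. Hence $\sup_t \M(z_t) \leq \max_i \dias'(M_i) + \length(\delta) + \eps$, and letting $\eps \to 0$ gives the result.

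Two things must be checked carefully. First, the homotopy class: I must verify that the concatenated family $(z_t)$ induces a generator of $\pi_1(\mathcal{Z}_1(M_0;\kk),\{0\})$, equivalently that it represents the fundamental class $[M_0] \in H_2(M_0;\kk)$ under Almgren's isomorphism \eqref{eq:alm}. This follows because the trace of the sweepout — the two-chain swept out over $[0,1]$ — is the sum of $D_1$ and $D_2$ with compatible orientations (inherited from the two $M_i$ sweepouts, which sweep out $[M_1]$ and $[M_2]$), and $D_1 + D_2 = M_0$ as a chain; the cone parts $\Cu_i$ that were discarded are null-homologous relative to $\partial D_i$, so discarding them does not change the homology class of the trace. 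Second, I must ensure the resulting family still satisfies the technical conditions (D.1--5): this is why it is essential to work with $\dias'$ and with sweepouts already satisfying (D.1--5). The truncation-and-capping operation, and the concatenation at the junction, must be done through piecewise smooth simple disjoint loops with the required disjointness in the parameter $t$; near the junction parameter one performs a small interpolation pushing the capping arcs off of $\delta$ and into the interiors of $D_1$ and $D_2$ on the two sides. This bookkeeping — making the cut-and-paste compatible with (D.1--5) while keeping the mass bound — is the main obstacle, but it is routine given that both input families already satisfy these conditions and that $\delta$ lies in the common simplicial boundary.
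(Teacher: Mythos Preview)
Your core approach---restrict each near-optimal sweepout $(z^i_t)$ to $D_i$, cap the resulting relative cycle by a one-chain supported on $\delta$, and concatenate the two families at $\delta$---is exactly what the paper does, and your mass estimate and homology-class check are correct. One remark: the ``preliminary homotopy'' you propose, rearranging $(z^i_t)$ so that it first sweeps the cone $\Cu_i$ and only afterwards sweeps $D_i$, is neither needed nor obviously achievable while keeping $\sup_t \M(z^i_t)\le \dias'(M_i)+\eps$; the paper skips it entirely and truncates the original sweepout directly. Concretely, the paper builds the capping chain $\alpha^i_t\subset\partial D_i$ as the trace on $\partial D_i$ of the endpoints of the arcs of $z^i_t\llcorner D_i$, so that $\tilde z^i_t:=(z^i_t\llcorner D_i)-\alpha^i_t$ is a genuine one-cycle in $D_i$ running from $0$ to $\pm\delta$ with $\M(\tilde z^i_t)\le \M(z^i_t)+\length(\delta)$; no rearrangement of the sweepout is required.
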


\begin{proof}
Fix $\epsilon >0$. By definition, for $i=1,2$, there exists a one-parameter family $(z^i_t)_{0\leq t\leq 1}$ of one-cycles on $M_i$, with coefficients in~$\kk$, which satisfies the conditions (D.1-5) of the definition of the (technical) diastole, \cf~Definition~\ref{def:dias}, such that

\forget
\begin{enumerate}
\item $z^i_t$ starts and ends at null-currents;
\item $z^i_t$ induces a nontrivial class $[z_i]$ in
$\pi_1(\mathcal{Z}_1(M_i,\Z/2\Z),\{0\})$;
\item and
\forgotten

$$
\dias'(M_i) \leq \sup_{0 \leq t \leq 1} \M(z^i_t) < \dias'(M_i)+\epsilon.
$$

Denote by $(\gamma_{j,t}^{i})$ the homotopies of the loops defining the family~$(z_{t}^{i})$, \cf~item~(D.3) of Definition~\ref{def:dias}.
By slightly perturbing the family $(z^i_t)$ and refining the subdivision $(t_p)$ if necessary, we can assume that the loops~$\gamma_{j,t}^{i}$ are disjoint or transverse to~$\partial D_i$ for every $t \neq t_p$, and that they have at most finitely many tangent points with~$\partial D_i$ for every~$t_{p}$, where $p=1,\cdots,k$.

\forget
The map taking a one-cycle~$z$ of~$M$ to the relative one-cycle~$z \llcorner D_i$ of $(D_i,\partial D_i)$ then to the zero-cycle~$\partial(z \llcorner D_i)$ of~$\partial D_i$ induces the first row of the following commutative diagram.
In this diagram, the second row is induced by the inclusions $\partial D_i \subset D_i \subset M$ and the vertical maps are natural isomorphisms.
See \cite{almgren}, \cite{alm65}.

\begin{equation*}
\begin{CD}
\pi_1(\mathcal{Z}_1(M),\{0\}) @>>> \pi_1(\mathcal{Z}_1(D_i,\partial D_i),\{0\}) @>>> \pi_1(\mathcal{Z}_0(\partial D_i),\{0\}) \\
@VVV @VVV @VVV \\
H_2(M;\Z/2\Z) @>>> H_2(D_i,\partial D_i;\Z/2\Z) @>>> H_1(\partial D_i;\Z/2\Z) \\
\end{CD}
\end{equation*}
\forgotten

The boundary~$\partial (z^i_t \llcorner D_i)$ of the restriction of~$z^i_t$ to the domain~$D_i$ defines a family of zero-cycles on~$\partial D_i$ with disjoint supports composed of finitely many points.
This family of one-cycles sweeps out~$\partial D_i$, that is, it starts and ends at null one-cycles, and induces a generator of~$\pi_1(\mathcal{Z}_0(\partial D_i;\kk),\{0\})$.
It also induces a family of one-chains~$\alpha^i_t$ on~$\partial D_i$ such that $\alpha^i_0 = 0$, $\alpha^i_1 = \pm \partial D_i$ and $\partial \alpha^i_t = \partial (z^i_t \llcorner D_i)$.

In a more constructive way, this family of one-chains on~$\partial D_{i}$ can be described as follows.
The restriction $z^i_t \llcorner D_i$ of~$z^i_t$ to~$D_i$, with $t_p \leq t \leq t_{p+1}$, decomposes as a finite sum of loops lying in~$D_i$ plus a finite sum of arcs~$c_{j,t}^i$ lying in~$D_i$ with endpoints on~$\partial D_{i}$.
As $t$ runs over $[t_p,t_{p+1}]$, the basepoints and the endpoints of the arcs~$c_{j,t}^i$ describe some arcs on~$\partial D_i$, \cf~\cite[\S14]{rham} for a more general construction.
The arcs described by the basepoints (resp. endpoints) of the~$c_{j,t}^i$'s inherit a negative (resp. positive) orientation.
Note that these arcs do not overlap because of the condition~(D.5).
Putting together the sum of these arcs on~$\partial D_i$ gives rise to a family of one-chains~$\alpha_t^i$ on~$\partial D_i$, starting at the null one-chain and ending at the one-cycle~$\pm \partial D_i$, such that $\partial \alpha^i_t = \partial (z^i_t \llcorner D_i)$.

Consider the family of one-cycles $\tilde{z}^i_t=(z^i_t \llcorner D_i) - \alpha^i_t$ lying in~$D_i$.
By construction, this family starts at the null one-cycle, ends at the one-cycle~$\pm \partial D_i$ and induces a generator of~$\pi_1(\mathcal{Z}_1(D_i,\partial D_i;\kk),\{0\})$.
Furthermore, its mass is bounded as follows
$$
\M(\tilde{z}^i_t) \leq \M(z^i_t)+\length(\delta).
$$

\forget
and $\tilde{z}^i_1=\pm \gamma$.
We can suppose that both $\tilde{z}^i_t$, $i=1,2$ are such that $\tilde{z}^i_1=\gamma$. If not consider $-\tilde{z}^i_t$ instead of $\tilde{z}^i_t$.
\forgotten

We can now define a new one-parameter family~$(\tilde{z}_t)_{0\leq t \leq 1}$ of one-cycles on~$M_0$ by concatenating  $\tilde{z}^1_t$ and~$\tilde{z}^2_t$ as follows
$$
\tilde{z}_t = \left\{\begin{array}{ll}
\tilde{z}^0_{2t} & \text{if } t \in [0,1/2], \\
\\
- \tilde{z}^1_{2-2t} & \text{if } t \in [1/2,1]. \\
\end{array}\right .
$$
By construction, this new family satisfies the conditions (D.1-4) of the definition of the (technical) diastole, \cf~Definition~\ref{def:dias}, and
\begin{equation} \label{eq:eps}
\M(\tilde{z}_t) < \max_i \dias'(M_i)+ \length(\delta) +\epsilon.
\end{equation}
It does not necessarily satisfy the condition~(D.5) though.
Indeed, the supports of the one-cycles~$\tilde{z}_{t}$ may have a nonempty intersection on~$\partial D_{i}$.

By slightly perturbing the family~$(\tilde{z}_t)$ in the neighborhood of~$\partial D_{i}$, it is possible to obtain a family of one-cycles satisfying the conditions~(D.1-5) and the inequality~\eqref{eq:eps}

\forget
\begin{enumerate}
\item $\tilde{z}_t$ starts and ends at null-currents;
\item $\tilde{z}_t$ induces a nontrivial class~$[\tilde{z}]$ in $\pi_1(\mathcal{Z}_1(M_0,\Z/2\Z),\{0\})$;
\item and
$$
\M(\tilde{z}_t) < \max_i \dias'(M_i)+ \length(\delta) +\epsilon.
$$
\end{enumerate}
\forgotten
Therefore, $\dias'(M_0) \leq \max_i \dias'(M_i)+ \length(\delta) +\epsilon$ for any positive $\epsilon $.
Hence the proposition.
\end{proof}

\section{Proof of the diastolic inequality}\label{sec:proof}

We can now prove the main diastolic inequality, \cf~Theorem~\ref{theo:A}.
We will use the notations of the previous sections.

From the discussion in Section~\ref{sec:equil}, \cf~Corollary~\ref{coro:same}, it is enough to show that
\begin{equation} \label{eq:C/K}
\dias'(M_{0}) \leq \frac{C}{K^{2}} \, \sqrt{g+1} \, \sqrt{\area(M_{0})}
\end{equation}
for every simplicial closed surface~$M_0$ (not necessarily orientable) of genus~$g$.
We will argue by induction both on the number~$N$ of triangles composing~$M_{0}$ and on its genus. From now on we suppose that Theorem~\ref{theo:A} is proved for surfaces of genus less than ~$g$. If $M_0$ is a $2$-sphere, the proof proceeds by induction only on the number of simplices.   
\medskip

Decompose $M_0$ into two simplicial domains $D_1$ and~$D_2$ as in Proposition~\ref{prop:split}.
Consider the simplicial closed surfaces~$M_i$ defined in Section~\ref{sec:max} by attaching simplicial cones along~$\partial D_i$.
By construction, the genus of~$M_i$ is at most~$g$.
Since~$\partial D_i$ is made up of $\length(\partial D_i)$ edges, the area of the cones glued to~$D_i$ is equal to $\frac{\sqrt{3}}{4} \, \length(\partial D_i)$.
Hence the formula
\begin{equation} \label{eq:area}
\area(M_i) = \area(D_{i}) + \frac{\sqrt{3}}{4} \length(\partial D_{i}).
\end{equation}

Set $N_0:= \sqrt{3} \, C_0^2 \, (g+1)=225 \sqrt{3} \cdot 96 \pi (g+1).$

\begin{lemma} \label{lem:fewer}
For $N \geq N_{0}$, the simplicial closed surfaces~$M_{i}$ are composed of fewer triangles than~$M_{0}$.
\end{lemma}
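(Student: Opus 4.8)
The idea is simply to count triangles. Let $N_i$ denote the number of triangles composing $M_i$. By construction, $M_i$ is obtained from the simplicial domain $D_i$ by attaching a simplicial cone along each connected component of $\partial D_i$, and the total number of triangles in these cones is exactly $\length(\partial D_i)$, the number of edges in $\partial D_i = \delta$. Writing $n_i$ for the number of triangles in $D_i$, so that $n_1 + n_2 = N$, we get
\begin{equation*}
N_i = n_i + \length(\delta).
\end{equation*}
Thus $N_1 + N_2 = N + 2\length(\delta)$, and $N_i < N$ for both $i$ will follow as soon as we show $\length(\delta) < \tfrac{1}{2} N$ together with the obvious $n_i < N$ (which needs $n_i \geq 1$, i.e.\ that both domains are nonempty — this is part of the conclusion of Proposition~\ref{prop:split}).

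First I would invoke Proposition~\ref{prop:split} to bound $\length(\delta) = \length(\partial D_i)$. That proposition gives
\begin{equation*}
\length(\delta) \leq C_0 \sqrt{g+1} \, \frac{\min\{\area(D_1),\area(D_2)\}}{\sqrt{\area(M_0)}} \leq C_0 \sqrt{g+1} \, \sqrt{\area(M_0)},
\end{equation*}
since $\min\{\area(D_1),\area(D_2)\} \leq \area(M_0)$. Now substitute the area formula $\area(M_0) = \tfrac{\sqrt 3}{4} N$ from~\eqref{eq:N}: this yields $\length(\delta) \leq C_0 \sqrt{g+1}\, \sqrt{\tfrac{\sqrt 3}{4} N} = \tfrac{C_0}{2} \, 3^{1/4} \sqrt{g+1}\, \sqrt N$. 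We want this to be less than $\tfrac{1}{2} N$, equivalently $C_0 \, 3^{1/4} \sqrt{g+1} < \sqrt N$, equivalently $N > \sqrt 3 \, C_0^2 (g+1) = N_0$. So for $N \geq N_0$ we indeed have $\length(\delta) < \tfrac12 N$, hence $N_i = n_i + \length(\delta) < n_i + \tfrac12 N \leq \tfrac12 N + \tfrac12 N = N$, using $n_i \leq N$.

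The remaining point is that $D_1$ and $D_2$ are genuinely nonempty simplicial domains (so $n_i \geq 1$ and the strict inequality $N_i < N$ is not vacuous), which is guaranteed by Proposition~\ref{prop:split}: it produces two simplicial domains with disjoint interiors decomposing $M_0$, and inequality~\eqref{eq:split} forces each $\area(D_i) > 0$ as soon as $\length(\partial D_i)$ is finite and positive, or directly in the case $N \leq N_*$ where the domains are explicitly a triangle and its complement. I do not expect any real obstacle here — the only thing to be careful about is matching the constant: one checks that the threshold $N_0 = \sqrt 3\, C_0^2 (g+1) = 225\sqrt 3 \cdot 96\pi (g+1)$ is exactly what makes $C_0\, 3^{1/4}\sqrt{g+1} < \sqrt{N}$ hold, using $C_0 = 15\sqrt{96\pi}$ so that $C_0^2 = 225 \cdot 96\pi$. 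This is a routine arithmetic verification rather than a conceptual difficulty.
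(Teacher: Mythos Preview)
There is a genuine gap in your argument. The step
\[
N_i = n_i + \length(\delta) < n_i + \tfrac{1}{2} N \leq \tfrac{1}{2} N + \tfrac{1}{2} N = N
\]
is incorrect: the inequality $n_i + \tfrac{1}{2} N \leq \tfrac{1}{2} N + \tfrac{1}{2} N$ requires $n_i \leq \tfrac{1}{2} N$, not merely $n_i \leq N$. But $n_i \leq \tfrac{1}{2} N$ holds for only \emph{one} of the two indices. For the larger domain, say $n_1 > \tfrac{1}{2} N$, you would need $\length(\delta) < N - n_1 = n_2 < \tfrac{1}{2} N$, and your bound $\length(\delta) < \tfrac{1}{2} N$ does not give this.

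The issue is that you discarded too much when replacing $\min\{\area(D_1),\area(D_2)\}$ by $\area(M_0)$ in the estimate from Proposition~\ref{prop:split}. The paper keeps that factor: for $N \geq N_0$ one has $\tfrac{\sqrt{3}}{4}\, C_0 \sqrt{g+1}/\sqrt{\area(M_0)} \leq \tfrac{1}{2}$, so the formula~\eqref{eq:area} combined with~\eqref{eq:split} gives
\[
\area(M_i) \;<\; \area(D_i) + \min\{\area(D_1),\area(D_2)\} \;\leq\; \area(D_1) + \area(D_2) \;=\; \area(M_0).
\]
In your triangle-count language, this amounts to $\length(\delta) < \min(n_1,n_2) \leq n_{3-i}$, whence $N_i = n_i + \length(\delta) < n_i + n_{3-i} = N$ for \emph{both} $i$. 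Your overall strategy is right; you just need to retain the $\min$ in the bound rather than coarsening to $\area(M_0)$.
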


\begin{proof}
Recall that $\area(M_{0}) = \frac{\sqrt{3}}{4} N$.
Thus, for $N \geq N_{0}$, the quantity $\frac{\sqrt{3}}{4} C_{0} \frac{\sqrt{g+1}}{\sqrt{\area(M_{0})}}$ is less than~$1/2$.
From the formula~\eqref{eq:area} and the bound~\eqref{eq:split}, we deduce that
\begin{eqnarray*}
\area(M_i) & < & \area(D_{i}) + \min\{\area(D_{1}),\area(D_{2})\} \\
& < & \area(D_{1}) + \area(D_{2}).
\end{eqnarray*}
Hence, $\area(M_i) <  \area(M_{0})$.
\end{proof}

Renumbering the $M_i$'s if necessary, we can assume that
$$
\dias'(M_1) \geq \dias'(M_2).
$$

Let $\alpha$ be a real such that $\alpha \, \area(M_{0}) = \min\{\area(D_{1}),\area(D_{2})\}$.
Note that $0 < \alpha \leq \frac{1}{2}$.
The inequality~\eqref{eq:split} can be written
\begin{equation} \label{eq:alpha}
\length(\delta) \leq C_{0} \, \alpha \, \sqrt{g+1} \sqrt{\area(M_{0})},
\end{equation}
where $\delta = \partial D_{i}$.

We have two cases to consider. \\

{\it First case:} Suppose that the number~$N$ of triangles of~$M_0$ is at most~$N_0$, where $N_{0}$ is defined in Lemma~\ref{lem:fewer}.
Fix an orientation on the surface~$M_{0}$.
This orientation induces an orientation on all the triangles of~$M_0$ and on their boundaries.
Of course, if the surface is nonorientable, there is nothing to do.
Consider the boundaries of the triangles of~$M_{0}$ as one-cycles with coefficients in~$\kk$.
The sum of these boundaries, viewed as one-cycles, vanishes.
Given a triangle~$\Delta$ of~$M_0$, there exists a homotopy from the boundary of~$\Delta$, with the induced orientation,
to the center of~$\Delta$ composed of disjoint loops.
The sum of these homotopies over all the triangles of~$M_0$ defines a one-parameter family of one-cycles~$(z_t)_t$ on~$M_{0}$, which starts and ends at the null one-cycle.
Furthermore, it induces a generator of~$\pi_1(\mathcal{Z}_1(M_0;\kk),\{0\}) \simeq H_2(M_0;\kk)$ and satisfies the technical conditions (D.3-5) of Definition~\ref{def:dias}.
Therefore,
$$
\dias'(M_0) \leq \sup_t \M(z_t) \leq 3 N.
$$
Since $\area(M_0) = \frac{\sqrt{3}}{4} N$ and $N \leq N_0$, we obtain
\begin{eqnarray*}
\dias'(M_0) & \leq & 3 \sqrt{N} \sqrt{N_{0}} \\
 & \leq & 2 \cdot 3^{3/4} \, \sqrt{N_0} \sqrt{\area(M_0)} \\
                    & \leq & 6 \, C_{0} \, \sqrt{g+1} \, \sqrt{\area(M_0)}.
\end{eqnarray*}

\bigskip

{\it Second case:} Suppose that $N$ is greater than~$N_0$.
We can assume that
$$
\frac{\dias'(M_1)}{\sqrt{\area(M_1)}} \leq \frac{\dias'(M_0)}{\sqrt{\area(M_0)}}
$$
otherwise the diastolic inequality~\eqref{eq:C/K} follows from Lemma~\ref{lem:fewer} by induction on~$g$ and~$N$.
That is,
$$
\dias'(M_{1}) \leq \lambda \, \dias'(M_{0}),
$$
where
\begin{equation} \label{eq:lambda}
\lambda := \sqrt{\frac{\area(M_1)}{\area(M_0)}}.
\end{equation}
Note that $\lambda <1$ from Lemma~\ref{lem:fewer}.
From Proposition~\ref{prop:max} and since $\dias'(M_1) = \max_i \dias'(M_{i})$, we deduce
\begin{eqnarray*}
\dias'(M_{0}) & \leq & \frac{1}{1-\lambda} \, \length(\delta).
\end{eqnarray*}
From the inequality~\eqref{eq:alpha}, we obtain
\begin{equation} \label{eq:E}
\dias'(M_{0}) \leq C_{0} \, \frac{\alpha}{1-\lambda} \, \sqrt{g+1} \, \sqrt{\area(M_{0})}.
\end{equation}

\begin{lemma}
We have $ \displaystyle \frac{\alpha}{1-\lambda} \leq 4$.
\end{lemma}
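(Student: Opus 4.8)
The goal is to bound the ratio $\alpha/(1-\lambda)$ by $4$, where $\lambda^2 = \area(M_1)/\area(M_0)$ and $\alpha\,\area(M_0) = \min\{\area(D_1),\area(D_2)\}$. First I would use the formula \eqref{eq:area} together with the bound \eqref{eq:split} (equivalently \eqref{eq:alpha}) to estimate $\area(M_1)$ from above. Since $M_1$ is obtained from $D_1$ by coning off $\partial D_1$, we have $\area(M_1) = \area(D_1) + \frac{\sqrt 3}{4}\length(\delta)$. Now $\area(D_1) \leq \area(M_0) - \min\{\area(D_1),\area(D_2)\} = (1-\alpha)\area(M_0)$ (using that $D_1$ is the domain of larger area, or at worst equal; if $\area(D_1)\le\area(D_2)$ then $\area(D_1) = \alpha\,\area(M_0) \le (1-\alpha)\area(M_0)$ as well since $\alpha\le\frac12$). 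Combining with $\length(\delta) \leq C_0\,\alpha\sqrt{g+1}\sqrt{\area(M_0)}$ from \eqref{eq:alpha} gives
\[
\area(M_1) \leq (1-\alpha)\,\area(M_0) + \frac{\sqrt 3}{4}\,C_0\,\alpha\sqrt{g+1}\sqrt{\area(M_0)}.
\]

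**Turning this into a bound on $\lambda$.** Dividing by $\area(M_0) = \frac{\sqrt 3}{4}N$ and using the hypothesis $N > N_0 = \sqrt 3\,C_0^2(g+1)$ of the second case, the quantity $\frac{\sqrt 3}{4}C_0\sqrt{g+1}/\sqrt{\area(M_0)} = C_0\sqrt{g+1}/\sqrt N$ is at most $C_0\sqrt{g+1}/\sqrt{N_0} = 3^{-1/4}$, which is comfortably less than $1$. Hence
\[
\lambda^2 = \frac{\area(M_1)}{\area(M_0)} \leq (1-\alpha) + 3^{-1/4}\alpha \leq 1 - c\,\alpha
\]
for an explicit constant $c = 1 - 3^{-1/4} > 0$. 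Then $1 - \lambda \geq 1 - \sqrt{1 - c\alpha} \geq \frac{c\alpha}{2}$ using the elementary inequality $\sqrt{1-x} \leq 1 - \frac x2$ for $x\in[0,1]$. Therefore $\frac{\alpha}{1-\lambda} \leq \frac{2}{c} = \frac{2}{1 - 3^{-1/4}}$, and one checks numerically that $\frac{2}{1-3^{-1/4}} < 4$ is \emph{false} in general — so I expect the actual argument to be slightly sharper, perhaps keeping the factor $\frac{\sqrt3}{4}C_0\sqrt{g+1}/\sqrt N$ bounded by something smaller (note $N\ge N_0$ may really be used with room to spare, or the $\frac14$ in \eqref{eq:split} vs.\ the constant $15$ gives slack). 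In any case the scheme is: bound $\area(M_1)$ linearly in $\alpha$, deduce $\lambda^2 \le 1 - c\alpha$, then $1-\lambda \ge \frac{c\alpha}{2}$, then $\alpha/(1-\lambda) \le 2/c \le 4$.

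**Main obstacle.** The only real subtlety is getting the numerical constant to land below $4$: this forces one to be careful about which domain plays the role of $D_1$ (the one coned to form $M_1$, with $\dias'(M_1)\ge\dias'(M_2)$, versus the one of minimal area) and to exploit the margin in $N\ge N_0$ as tightly as the inequality $\alpha/(1-\lambda)\le 4$ demands. I would carry out the estimate with $\area(D_1)\le (1-\alpha)\area(M_0)$ and the sharp form of \eqref{eq:alpha}, then verify the resulting closed-form bound on $\alpha/(1-\lambda)$ is $\le 4$ by a direct computation, adjusting the threshold constant $N_0$ upward if necessary (it is chosen precisely to make this work). Everything else — the algebra with $\lambda^2 = \area(M_1)/\area(M_0)$ and the concavity bound $\sqrt{1-x}\le 1-x/2$ — is routine.
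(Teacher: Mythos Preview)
Your scheme is exactly the right one, and in fact coincides with the paper's argument (the paper splits into the two cases $\area(D_1)\le\area(D_2)$ and $\area(D_1)>\area(D_2)$, but the second case --- which is precisely your unified bound $\area(D_1)\le(1-\alpha)\,\area(M_0)$ --- is the binding one and already gives~$4$). The only problem is an arithmetic slip in your simplification, which is why your constant came out wrong.

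You wrote
\[
\frac{\sqrt3}{4}\,\frac{C_0\sqrt{g+1}}{\sqrt{\area(M_0)}} \;=\; \frac{C_0\sqrt{g+1}}{\sqrt N},
\]
but with $\area(M_0)=\tfrac{\sqrt3}{4}N$ one has $\sqrt{\area(M_0)}=\tfrac{3^{1/4}}{2}\sqrt N$, so the left-hand side equals $\dfrac{3^{1/4}}{2}\cdot\dfrac{C_0\sqrt{g+1}}{\sqrt N}$, not $\dfrac{C_0\sqrt{g+1}}{\sqrt N}$. Plugging in $N\ge N_0=\sqrt3\,C_0^2(g+1)$ then gives
\[
\frac{\sqrt3}{4}\,\frac{C_0\sqrt{g+1}}{\sqrt{\area(M_0)}}
\;\le\; \frac{3^{1/4}}{2}\cdot\frac{C_0\sqrt{g+1}}{\sqrt{N_0}}
\;=\; \frac{3^{1/4}}{2}\cdot 3^{-1/4}
\;=\; \frac12,
\]
not $3^{-1/4}$. (This is exactly the bound the paper records just before the case split.) With the correct constant your inequality reads $\lambda^2\le 1-\tfrac{\alpha}{2}$, hence $1-\lambda\ge 1-\sqrt{1-\alpha/2}\ge \alpha/4$ by $\sqrt{1-x}\le 1-x/2$, and therefore $\alpha/(1-\lambda)\le 4$ on the nose. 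No adjustment of $N_0$ is needed; the threshold was chosen precisely so that this works.
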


\begin{proof}
\forget
Let $\kappa= \frac{\sqrt{3}}{4} \, C_{0} \, \frac{\sqrt{g+1}}{\sqrt{\area(M_{0})}}$.
Since $\area(M_{0}) = \frac{\sqrt{3}}{4} N$ and $N \geq N_{0}$, where $N_{0}$ is defined in Lemma~\ref{lem:fewer}, we have $\kappa \leq \frac{1}{3}$.
We will consider two cases.
\forgotten

Since $N \geq N_0$, the quantity $\frac{\sqrt{3}}{4} C_{0} \frac{\sqrt{g+1}}{\sqrt{\area(M_{0})}}$ is less than~$\frac{1}{2}$ and
$$ \area(M_i) \leq \area(D_i) + \frac{1}{2} \min\{\area(D_1),\area(D_2)\} $$
from the formula~\eqref{eq:area} and the bound~\eqref{eq:split}.

If $\area(D_{1}) \leq \area(D_{2})$, then $\area(D_{1}) = \alpha \, \area(M_{0})$.
In this case, we deduce that $\lambda \leq \sqrt{\frac{3}{2} \alpha}$ from the definition~\eqref{eq:lambda} of~$\lambda$.
Since $\alpha \leq \frac{1}{2}$, we obtain
\begin{equation*}
\frac{\alpha}{1-\lambda} \leq  \frac{\alpha}{1-\sqrt{\frac{3}{2} \alpha}} \leq  2 + \sqrt{3}.
\end{equation*}

Otherwise, $\area(D_{1}) = (1-\alpha) \, \area(M_{0})$.
As previously, we derive, in this case, that $\lambda \leq \sqrt{1-\frac{\alpha}{2}}$.
Since $\alpha > 0$, we get
\begin{equation*}
\frac{\alpha}{1-\lambda} \leq 2\left( 1 + \sqrt{1-\frac{\alpha}{2}} \right) \leq  4.
\end{equation*}
\end{proof}

This lemma with the inequality~\eqref{eq:E} implies that
\begin{equation*}
\dias'(M_{0}) \leq 4 \, C_{0} \, \sqrt{g+1} \, \sqrt{\area(M_{0})}.
\end{equation*}

\bigskip

Therefore, the desired diastolic inequality~\eqref{eq:C/K} holds in both cases.

\begin{remark} \label{rem:g}
Let $(z_t)$ be a family of one-cycles which sweeps out a closed Riemannian surface~$M$, \cf~(D.1-2).
From the constructions of~\cite{almgren}, there exists $t_0$ such that $z_{t_0}$ decomposes~$M$ into two disjoint domains of area at least $\frac{1}{4} \area(M)$.
Hence,
$$\dias(M) \geq \M(z_{t_0}) \geq \frac{1}{4} h(M) \area(M).$$
The closed hyperbolic surfaces of arbitrarily large genus with Cheeger constant bounded away from zero constructed in~\cite{bro86} provide examples of surfaces with $\area(M) \simeq g$ and $\dias(M) \gtrsim g$.
Thus, the dependence of the inequality~\eqref{eq:A} on the genus is optimal.
\end{remark}

\section{Comparison of Riemannian invariants on the two-sphere} \label{sec:comp}

In this section, we clarify the relationships between several Riemannian invariants on the two-sphere by comparing them up to an equivalence relation.

\begin{definition}
Two Riemannian invariants $I_1$ and~$I_2$ are said to be \emph{essentially equivalent} on a manifold~$M$ if there exist two positive constants $C$ and~$C'$ such that for every Riemannian metric~$\G$ on~$M$, we have
$$
C \, I_1(\G) \leq I_2(\G) \leq C' \, I_1(\G).
$$
We denote by~$I_1(M) \simeq I_2(M)$ this equivalence relation.
\end{definition}

Together with the filling radius introduced in Definition~\ref{def:fr}, we will consider the following Riemannian invariants.

\begin{definition} \label{def:L}
Let $M$ be a closed $n$-dimensional Riemannian manifold.
\begin{itemize}


\item The \emph{$1$-hypersphericity}, denoted by~$\hs(M)$, is the supremum of the positive reals~$R$ so that there is a contracting map of degree $1$ from $M$ to the $n$-sphere of radius $R$.

\medskip

\item The \emph{Uryson $1$-width}, denoted by~$\w(M)$, is the infimum of the positive reals~$W$ so that there is a continuous map from $M$ to a $1$-dimensional polyhedron whose fibers have diameter less than $W$.

\medskip

\item
Let us define the functional $\mu$ on the one-cycle space~$\mathcal{Z}_1(M,\Z)$ of a Riemannian two-sphere~$M$ as
$$
\mu(z) = \sup \{ \mass(z_i) \mid z_i \text{ is a connected component of } z \}.
$$

We define a critical value of the functional $\mu$ by a global minimax principle
$$
L(M)=\inf_{(z_{t})} \sup_{0 \leq t \leq 1} \mu(z_t)
$$
where  $(z_{t})$ runs over the families of one-cycles satisfying the following conditions:

\begin{enumerate}
\item[(C.1)]  $z_t$ starts and ends at null one-cycles;
\item[(C.2)] $z_t$ induces a generator of $H_2(M;\Z)$.
\end{enumerate}

\end{itemize}
\end{definition}

\forget

\bigskip

In \cite{Gr88} M. Gromov proves that
$$
\hs(M)\leq \w(M)
$$
for all Riemannian surface $M$. Using this inequality and a result of L. Guth \cite[Theorem 0.9]{Guth05} which states that every closed oriented Riemannian surface $M$ of genus $g$ satisfies
$$
\w(M)\leq (200g+12)\hs(M),
$$
 we deduce the

\begin{proposition}\label{Gu}
For every closed oriented surface $\Sigma$
$$
\hs(\Sigma) \simeq \w(\Sigma).
$$
\end{proposition}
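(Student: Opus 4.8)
The plan is to obtain the essential equivalence immediately by chaining the two inequalities recalled just above, which are in fact the only ingredients required. First I would invoke Gromov's inequality \cite{Gr88}, valid on every Riemannian surface, namely $\hs(M)\leq\w(M)$; applying it to an arbitrary metric $\G$ on $\Sigma$ gives one half of the definition of essential equivalence, with constant $C=1$.

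Next I would invoke Guth's theorem \cite[Theorem 0.9]{Guth05}: a closed oriented Riemannian surface of genus $g$ satisfies $\w(M)\leq(200g+12)\hs(M)$. The key point -- and really the whole content of the argument -- is that the factor $200g+12$ depends only on the topological type of the surface and not on its Riemannian metric. Since $\Sigma$ is a \emph{fixed} closed oriented surface, its genus $g$ is a fixed integer, so $C':=200g+12$ is a legitimate constant in the sense of the definition of ``essentially equivalent''. Combining the two estimates, every metric $\G$ on $\Sigma$ satisfies $\hs(\Sigma,\G)\leq\w(\Sigma,\G)\leq(200g+12)\,\hs(\Sigma,\G)$, which is exactly the assertion $\hs(\Sigma)\simeq\w(\Sigma)$ with $C=1$ and $C'=200g+12$.

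I do not anticipate any real obstacle: the statement is a purely formal consequence of the two cited inequalities. The only subtlety worth flagging is the bookkeeping of dependencies -- one must notice that Guth's multiplicative factor, although it blows up with the genus, is constant along the space of metrics on a fixed surface, which is precisely what the definition of essential equivalence on a fixed manifold requires. No uniform statement over surfaces of arbitrary genus is being claimed, and indeed none could hold, since $C'$ is forced to grow at least linearly in $g$.
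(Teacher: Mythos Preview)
Your proposal is correct and follows exactly the same route as the paper: the proposition is stated immediately after recalling Gromov's inequality $\hs(M)\leq\w(M)$ from \cite{Gr88} and Guth's bound $\w(M)\leq(200g+12)\hs(M)$ from \cite[Theorem~0.9]{Guth05}, and is deduced directly from their combination. Your observation that the genus-dependent constant is admissible because $\Sigma$ is fixed is precisely the point, and there is nothing to add.
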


Also note that for every Riemannian manifold $M$ of dimension $n$
$$
\FR(S^n) \, \hs(M)\leq \FR(M)
$$
as the filling radius decreases under contracting maps of degree one (see \cite{gro83}).

\begin{remark}
From \cite{Ka83} we know the exact value of the filling radius of the $n$-sphere:
$$
\FR(S^n)=\frac{1}{2}\arccos\left(\frac{-1}{n+1}\right).
$$
\end{remark}

\bigskip

We now turn our attention on $2$-spheres. By the above proposition the $1$-hypersphericity and the Uryson $1$-width are essentially equivalent on the two-sphere. However the diastole and the filling radius are \emph{not} essentially equivalent, see~\cite[Theorem~1.6]{Sa04}. In order to compare the $1$-hypersphericity and the Uryson $1$-width with the filling radius we introduce a new Riemannian invariant - a weaker version  of the diastole - as follows.

 Furthermore it follows from \cite[Theorem 0.3]{Guth05} that
$$
L(M)\leq 120 \hs(M)
$$
for every Riemannian two-sphere~$M$.

\medskip

As $\FR(M)\leq *L(M)$ (WHY) we deduce the

\forgotten

In the following proposition, we compare these Riemannian invariants on the two-sphere.

\begin{proposition} \label{prop:comp}
The hypersphericity, the Uryson $1$-width, the filling radius and $L$ are essentially equivalent on the two-sphere~$S^2$, that is
$$
\w(S^2)\simeq\hs(S^2)\simeq\FR(S^2)\simeq L(S^2).
$$
\end{proposition}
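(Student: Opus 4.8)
The plan is to establish the chain of essential equivalences
$\w(S^2)\simeq\hs(S^2)\simeq\FR(S^2)\simeq L(S^2)$ by proving each pairwise equivalence, mostly by citing the structural results already invoked in the introduction and in Section~\ref{sec:ch}. First I would recall that for \emph{any} Riemannian manifold $M$ the inequality $\hs(M)\le\w(M)$ holds by a straightforward argument (a contracting degree-one map to a round sphere cannot factor through a $1$-polyhedron with small fibers without collapsing the fundamental class), and that on the two-sphere L.~Guth's estimate gives $\w(S^2)\le C\,\hs(S^2)$ for a universal constant (this is the genus-$0$ case of $\w(M)\le(200g+12)\hs(M)$). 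Together these yield $\w(S^2)\simeq\hs(S^2)$.

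Next I would compare these with the filling radius. On one side, the filling radius is monotone under contracting degree-one maps, so $\FR(S^2_R)\,\hs(M)\le\FR(M)$; since $\FR$ of the unit round sphere is a fixed positive number ($\tfrac12\arccos(-\tfrac13)$ by Katz), this gives $\hs(M)\lesssim\FR(M)$ for $M$ a two-sphere. For the reverse inequality $\FR(S^2)\lesssim\hs(S^2)$, the cleanest route is through $L$: I would first show $L(M)\le C\,\hs(M)$ for every Riemannian two-sphere $M$, which is essentially a minimax reformulation of Guth's sweepout bound (\cite[Theorem~0.3]{Guth05}) applied to the functional $\mu$ measuring the largest connected component; the point is that a degree-one contracting map to a small round sphere pulls back the standard great-circle sweepout to a family of one-cycles each of whose components is short. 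Then I would prove $\FR(M)\le C\,L(M)$ by adapting the argument of Proposition~\ref{prop:ell}: given a near-optimal sweepout $(z_t)$ for $L(M)$, one uses the Almgren isomorphism to find a time $t_0$ at which $z_{t_0}$ splits $M$ into two domains of area at least $\tfrac14\area(M)$, and then runs the same extension-over-a-$3$-complex contradiction argument that proves ${\mathcal L}(M)\le 6\FR(M)$, but keeping track only of the \emph{connected components} of the relevant one-cycles rather than their total mass. This is exactly why the paper isolates the functional $\mu$ and the invariant $L$ as an intermediate object between $\FR$ and $\hs$.

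Finally, assembling the pieces: $\hs(S^2)\lesssim\FR(S^2)\lesssim L(S^2)\lesssim\hs(S^2)$ closes a cycle, so all four of $\w,\hs,\FR,L$ on $S^2$ are pairwise essentially equivalent, which is the assertion of Proposition~\ref{prop:comp}. I would present the proof as: (1) $\hs\simeq\w$ via Gromov's trivial inequality and Guth's genus-$0$ width bound; (2) $\hs\lesssim\FR$ via monotonicity of the filling radius under contracting degree-one maps together with $\FR(S^2_{\mathrm{round}})>0$; (3) $L\lesssim\hs$ via Guth's sweepout estimate; (4) $\FR\lesssim L$ via the filling-radius argument of Proposition~\ref{prop:ell} applied componentwise.

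The main obstacle I expect is step~(4), the inequality $\FR(M)\le C\,L(M)$: in Proposition~\ref{prop:ell} the key geometric input is that a short simple loop on $S^2$ bounds a disk of area less than $\tfrac14\area(M)$, which let one extend $f$ over $2$-simplices and then over $3$-simplices by a degree argument. Here the one-cycles coming from an $L$-sweepout need not be simple loops of small total length — only their individual connected components are controlled — so one must argue that after a surgery/cut-and-paste (using the decomposition of one-cycles into indecomposable pieces recalled in Section~\ref{sec:dias}) one may replace each $z_t$ by a family of disjoint simple loops each of length $\lesssim L(M)$, and then that the collection of disks they bound can be organized coherently so that the boundary of each $2$-simplex still maps to a small null-homotopic loop. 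Getting the area bookkeeping and the continuity of this surgery in the parameter $t$ right is the technical heart; everything else is an application of results already quoted.
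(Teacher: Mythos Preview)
Your steps (1)--(3) are essentially the paper's argument: Gromov's inequality between $\hs$ and $\w$, Guth's bounds $L\lesssim\hs$ and $\hs\lesssim\w$, and the monotonicity of the filling radius under contracting degree-one maps giving $\hs\lesssim\FR$.

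Step~(4), however, has a genuine gap, and it is not the obstacle you describe. The extension-over-a-$3$-complex argument of Proposition~\ref{prop:ell} is a method for producing inequalities of the form ``invariant $\leq C\cdot\FR(M)$'': one assumes $\FR(M)$ is small, takes the resulting filling $\sigma:P\to U_r$, and uses the largeness of the invariant (there, ${\mathcal L}$) to \emph{enable} the extension of $\sigma_{|\partial P}$ to all of $P$, contradicting nontriviality of the fundamental class. The hypothesis on the invariant is what makes the extension succeed. This scheme can only ever output lower bounds on $\FR$; indeed, in the appendix the paper adapts exactly this scheme to prove $L(M)\leq 18\,\FR(M)$. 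It cannot be turned around to give the upper bound $\FR(M)\leq C\,L(M)$ that you need. Picking a single balanced slice $z_{t_0}$ is irrelevant here: the extension argument requires information about \emph{every} short loop on $M$, not about one particular one-cycle, and in any case the conclusion would point the wrong way.

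The paper's actual proof of $\FR(M)\leq C\,L(M)$ is constructive rather than by contradiction. Take a near-optimal sweepout $(z_t)$ for $L$. The key observation is that for $t_1$ close to $t_2$ the one-cycle $z_{t_2}-z_{t_1}$ bounds a $2$-chain of small mass on $M$: by definition of the flat norm there is a $2$-chain $A$ with both $\M(z-\partial A)$ and $\M(A)$ small, and then the isoperimetric inequality on $M$ fills $z-\partial A$ by a small $2$-chain $B$, so $A+B$ does the job. With this in hand, one follows Gromov's construction from \cite[p.~133]{gro83} to build a $3$-dimensional pseudomanifold $X$ together with a $1$-Lipschitz degree-one map $\partial X\to M$ such that every point of $X$ lies within $C\,L(M)+\varepsilon$ of $\partial X$. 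Such a thin filling gives $\FR(M)\leq C\,L(M)$ directly. Note that the entire sweepout is used to assemble $X$; no surgery on components and no single slice suffices. Your diagnosis of the ``main obstacle'' (making the $z_t$ into disjoint simple loops with coherent small disks) is addressing a difficulty in an argument that would not prove the desired inequality even if that difficulty were overcome.
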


\begin{proof}
Since the filling radius of every closed Riemannian $n$-manifold decreases under contracting maps of degree one, \cf~\cite{gro83}, we have
\begin{equation} \label{eq:a}
\FR(S^2,\can) \, \hs(M) \leq \FR(M)
\end{equation}
where the value of the filling radius of the standard spheres has been computed in~\cite{Ka83}.
Here, $\FR(S^2,\can) = \frac{1}{2}\arccos\left(-\frac{1}{3}\right)$.

The inequality
\begin{equation} \label{eq:b}
\w(M) \leq \hs(M)
\end{equation}
 holds for every closed Riemannian manifold, \cf~\cite{Gr88}.

The inequalities
\begin{equation} \label{eq:c}
\frac{L(M)}{120} \leq \hs(M) \leq \frac{2}{\pi} \w(M)
\end{equation}
have been established in~\cite[Theorem~0.3 and p.~1058]{Guth05}.

Next we will prove that there exists a constant~$C$
\begin{equation} \label{eq:d}
\FR(M) \leq C \, L(M).
\end{equation}
First of all observe that if $(z_{t})$ is a family of one-cycles on~$M$ and if $t_{1}$ and~$t_{2}$ are close enough, the one-cycle $z=z_{t_{2}}-z_{t_{1}}$ bounds a $2$-chain of small mass on~$M$.
Indeed, by definition of the flat norm, \cf~Section~\ref{sec:dias}, and since the one-cycle~$z$ has a small flat norm, there exists a $2$-chain~$A$ such that both $\M(z-\partial A)$ and~$\M(A)$ are small.
By the isoperimetric inequality on~$M$, there exists a $2$-chain~$B$ of small mass bounding the one-cycle~$z-\partial A$.
Thus, the $2$-chain~$A+B$ bounds~$z$ and has a small mass.
Now, using the arguments of~\cite[p.~133]{gro83}, one can show that, for every $\varepsilon > 0$, there exists a $1$-Lipschitz map $\partial X \to M$ of degree one, where $X$ is a $3$-dimensional pseudomanifold endowed with a metric length structure such that $d(x,\partial X) \leq C \, L(M) + \varepsilon$.
We immediately derive the inequality~\eqref{eq:d}.

Putting together the inequalities \eqref{eq:a}, \eqref{eq:b}, \eqref{eq:c} and \eqref{eq:d} yields the desired relations.
\end{proof}

\begin{remark}
We have $L(M) \leq \dias(M)$ for every Riemannian two-sphere~$M$ but the diastole is \emph{not} essentially equivalent to any of the Riemannian invariant of Proposition~\ref{prop:comp} on the two-sphere from~\cite[Theorem~1.6]{Sa04}.
In the appendix, we bound from below the filling radius of the Riemannian two-spheres in terms of the invariant~$L$ using different methods.
\end{remark}

\renewcommand{\theequation}{A.\arabic{equation}}
\setcounter{equation}{0}

\renewcommand{\thesection}{\Alph{section}}
\setcounter{section}{1}

\theoremstyle{theorema}
\newtheorem{theorema}{Theorem}[section]
\newtheorem{propositiona}[theorema]{Proposition}
\newtheorem{corollarya}[theorema]{Corollary}
\newtheorem{lemmaa}[theorema]{Lemma}

\theoremstyle{definition}
\newtheorem{definitiona}[theorema]{Definition}
\newtheorem{examplea}[theorema]{Example}
\newtheorem{remarka}[theorema]{Remark}
\newtheorem{conjecturea}[theorema]{Conjecture}

\section*{Appendix}

In this appendix, we  prove the following result which first appeared in~\cite[\S2.4.2]{Sa01} and was not published in a journal.

\begin{propositiona} \label{prop:L18}
Let $M$ be a Riemannian two-sphere, then
$$L(M)\leq 18 \, \FillRad(M). $$
\end{propositiona}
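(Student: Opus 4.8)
The plan is to mimic the argument of Proposition~\ref{prop:ell}, which bounds $\mathcal{L}(M)$ by $6\FR(M)$, but using the minimax invariant $L(M)$ in place of $\mathcal{L}(M)$ so as to control the areas of the disks produced along the way. Argue by contradiction: suppose $\FR(M) < r$ with $18 r < L(M)$, and let $i\co M \hookrightarrow L^\infty(M)$ be the Kuratowski embedding. By definition of the filling radius there is a map $\sigma\co P \to U_r$ from a $3$-complex to the $r$-neighborhood of $i(M)$ with $\sigma_{|\partial P}$ representing the fundamental class of $i(M)\simeq M$ in $H_2$. After deforming and subdividing, assume $\sigma$ sends edges to minimizing segments and the $\sigma$-image of every simplex has diameter less than a small $\eps$. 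As in the proof of Proposition~\ref{prop:ell}, define $f$ to agree with $\sigma$ on $\partial P$, send each vertex $p$ of $P\setminus\partial P$ to a nearest point $f(p)\in i(M)$, and extend over edges by minimizing segments; then adjacent vertices satisfy $d_{i(M)}(f(p),f(q)) < 2r+\eps =: \rho$, so the image under $f$ of the boundary of each $2$-simplex is a loop of length less than $3\rho < L(M)$ for $\eps$ small.

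The key difference is in the extension over $2$-simplices. One cannot simply fill each such short loop $\gamma_{\Delta^2}$ by a disk of controlled area, because a single short loop need not bound a small disk on an arbitrary two-sphere. Instead I would fill $\gamma_{\Delta^2}$ using the minimax structure defining $L(M)$: the loop $\gamma_{\Delta^2}$, being a one-cycle of mass less than $L(M)$, can be connected to the null one-cycle by a family $(z_t)$ of one-cycles each of whose connected components has mass at most (essentially) $L(M)$; such a family sweeps out a $2$-chain in $M$, and — by the same ``no short separating loop with a big disk on both sides'' principle used via $\mathcal L$ in Proposition~\ref{prop:ell}, now applied componentwise along the sweepout — one extracts from it a filling disk $D_{\Delta^2}$ whose area is at most $\tfrac14\area(M)$. (This is where the constant degrades from $6$ to $18$: one pays a factor roughly $3$ for passing through the componentwise minimax rather than a single separating curve.) Having filled every $2$-simplex with a disk of area less than $\tfrac14\area(M)$, the restriction of the extended $f$ to the boundary $\partial\Delta^3$ of each $3$-simplex is a map of spheres whose image has area less than $\area(M)$, hence is degree zero, so $f$ extends over $\Delta^3$. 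This produces an extension $f\co P \to i(M)$ of $\sigma_{|\partial P}$, contradicting nontriviality of the fundamental class.

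The main obstacle is making precise the filling-by-minimax step: one must show that a one-cycle $\gamma$ with $\mu(\gamma) < L(M)/3$ (say) bounds in $M$ a $2$-chain, indeed an embedded disk, of area at most $\tfrac14\area(M)$, and to do so cleanly one wants to deform $\gamma$ through one-cycles staying below the relevant $\mu$-threshold, apply the definition of $L(M)$ to recognize that it cannot be ``sweepout-incompressible,'' and then run the area-bookkeeping of Proposition~\ref{prop:split}'s deformation lemmas (Lemmas~\ref{lem:gamma'} and~\ref{lem:DD}) to keep track of both length and enclosed area. Getting the numerology to land exactly at $18$ — rather than some larger constant — will require choosing the thresholds in the minimax and the contradiction hypothesis ($18r < L(M)$, loops of length $<3\rho$, disks of area $<\tfrac14\area(M)$) compatibly, exactly as the analogous bookkeeping was balanced in Proposition~\ref{prop:ell}.
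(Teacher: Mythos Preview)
Your outline follows the paper's argument up through the extension to the $1$-skeleton, but the crucial step --- filling the $2$-simplices and then extending over the $3$-simplices --- is where you diverge, and your proposed mechanism does not work as stated.

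You want to conclude that a loop $\gamma_{\Delta^2}$ of length $<3\rho<L(M)$ bounds a disk of area at most $\tfrac14\area(M)$, and you suggest extracting this from ``the minimax structure defining $L(M)$'' together with the deformation bookkeeping of Lemmas~\ref{lem:gamma'} and~\ref{lem:DD}. But the invariant $L(M)$ is defined via the functional $\mu$ (maximum length of a connected component), not via area; nothing in its definition produces an area bound on a disk bounded by a short loop. In fact, the statement ``every loop of length $<\tfrac13 L(M)$ bounds a disk of area $\le\tfrac14\area(M)$'' is essentially the inequality $\mathcal L(M)\lesssim L(M)$, which is not available a~priori and is precisely of the strength of what you are trying to prove. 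The lemmas you cite from Section~\ref{sec:ch2} concern edge-pushing on a simplicial surface and are unrelated to this issue.

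The paper's route is genuinely different and avoids area altogether. It first reduces to the case where the interior of $N$ contains no minimizing geodesic loop, and develops a \emph{curve-deformation process}: a non-minimizing simple geodesic loop is cut by a minimizing arc on a well-defined ``deformation side'' and each half is flowed; this is $\mu$-nonincreasing, and disjoint loops remain disjoint under it (Lemmas~\ref{lem:two-min} and~\ref{lem:side}). Each $f(\partial\Delta^2)$ is first shortened to a geodesic loop, then pushed to $\partial N$ by iterating this process, giving coherent $\mu$-nonincreasing fillings of the $2$-simplices. The extension over a $3$-simplex $\Delta^3$ is \emph{not} done by a degree/area argument: instead one observes that the four face-fillings can be paired so that the combined homotopies form a one-parameter family $(z_t)$ representing $[\partial\Delta^3]$ in $H_2(N,\partial N)$ with $\mu(z_t)\le 9\rho < L(N)$. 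Since any generator of $H_2(N,\partial N)$ requires $\sup_t\mu(z_t)\ge L(N)$, the class must vanish, and $f$ extends. Thus the definition of $L$ is invoked at the level of $3$-simplices, through $\mu$-control of a sweepout, not through area bounds on filling disks.
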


\begin{remarka}
The multiplicative constant in this inequality is better than the one obtained in the proof of Proposition~\ref{prop:comp}.
Furthermore, it follows from a more general result, namely Proposition~\ref{prop:L}.
\end{remarka}

From the bounds $\FillRad(M) \leq \sqrt{\area(M)}$, \cf~\cite{gro83}, and $\FillRad(M) \leq \frac{1}{3} \diam(M)$, \cf~\cite{Ka83}, we immediately deduce the following corollary.

\begin{corollarya}
Let $M$ be a Riemannian two-sphere, then
\begin{align}
L(M) & \leq 18 \, \sqrt{\area(M)}  \label{eq:La} \\
L(M) & \leq 6 \, \diam(M).\label{eq:Ld}
\end{align}
\end{corollarya}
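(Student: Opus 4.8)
The plan is to deduce Corollary~A.2 directly from Proposition~\ref{prop:L18} by inserting the two classical upper bounds on the filling radius that are quoted just before the statement. There is essentially no independent argument to make here: the mathematical content lies entirely in Proposition~\ref{prop:L18} (the inequality $L(M) \leq 18\,\FillRad(M)$) together with the cited estimates $\FillRad(M) \leq \sqrt{\area(M)}$ from~\cite{gro83} and $\FillRad(M) \leq \frac{1}{3}\diam(M)$ from~\cite{Ka83}.

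Concretely, I would proceed in two steps. First, combining $L(M) \leq 18\,\FillRad(M)$ with Gromov's bound $\FillRad(M) \leq \sqrt{\area(M)}$ gives
\begin{equation*}
L(M) \leq 18\,\FillRad(M) \leq 18\,\sqrt{\area(M)},
\end{equation*}
which is~\eqref{eq:La}. Second, combining the same inequality with Katz's bound $\FillRad(M) \leq \frac{1}{3}\diam(M)$ yields
\begin{equation*}
L(M) \leq 18\,\FillRad(M) \leq 18 \cdot \tfrac{1}{3}\,\diam(M) = 6\,\diam(M),
\end{equation*}
which is~\eqref{eq:Ld}. Since $M$ is a Riemannian two-sphere, Proposition~\ref{prop:L18} applies without any further hypothesis, and both cited filling-radius bounds hold for closed Riemannian two-spheres, so nothing else needs to be checked.

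The only point that deserves a remark — rather than a genuine obstacle — is bookkeeping of the constants: one should make sure the multiplicative factor $18$ in Proposition~\ref{prop:L18} is carried through unchanged, and that the factor $\frac{1}{3}$ from the diameter bound is correctly absorbed to produce the clean constant $6$. No analytic or topological work is required beyond quoting the three inequalities, so the proof is a short two-line derivation.
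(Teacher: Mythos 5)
Your derivation is correct and is exactly the paper's argument: the authors also deduce the corollary immediately from Proposition~A.1 combined with the quoted bounds $\FillRad(M) \leq \sqrt{\area(M)}$ and $\FillRad(M) \leq \frac{1}{3}\diam(M)$. Nothing further is needed.
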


\forget

\begin{remarka}
Let $M$ be a Riemannian two-sphere. From \cite{Guth05} we know that $L(M)\leq 120 \hs(M)$. As $\FR(S^2) \, \hs(M)\leq \FR(M)$ we get
$$
L(M) \leq \frac{120}{\FR(S^2)}\FR(M)\approx 125,6\FR(M).
$$
So our constant in the Theorem \ref{theo:L} is better than the one we can deduce from Guth's work.
\end{remarka}

\forgotten

In order to prove Proposition~\ref{prop:L18}, we need to extend the definition of the invariant~$L$, \cf~Definition~\ref{def:L}, as follows.

Let $N \subset M$ be a connected domain whose boundary $\partial N$ is a finite union of closed geodesics $c_i$ with $i \in I$.

Let us consider the one-parameter families of one-cycles $(z_t)_{0 \leq t \leq 1}$ on $N$ which satisfy the following conditions.
\begin{enumerate}
\item[(C'.1)] $z_0 = \cup_{i \in I_0} c_i$ and $z_1=\cup_{i \in I_1} c_i$ for some partition of $I=I_0 \amalg I_1$
\item[(C'.2)] $z_t$ induces a generator of $H_2(N,\partial N;\Z)$.
\end{enumerate}
If $I_0$ or $I_1$ is empty, the corresponding one-cycles are reduced to the null one-cycle. \\

We define a critical value of the functional $\mu$ by a global minimax principle
$$ L(N)=\inf_{(z_{t})} \sup_{0 \leq t \leq 1} \mu(z_t) $$
where  $(z_{t})$ runs over the families of one-cycles satisfying the conditions~\mbox{(C'.1-2)} above.
\bigskip

We will need a few more definitions.
Every nontrivial simple loop~$\gamma$ of~$M$ admits two sides defined as the connected components of $M \setminus \gamma$.
A nontrivial simple geodesic loop~$\gamma$ is said to be minimizing with respect to one of its sides if every pair of points of $\gamma$ can be joined by a minimizing segment which does not pass through this side. A simple geodesic loop minimizing with respect to both sides is said to be minimizing. \\

Now, we can state the following result which immediately leads to Proposition~\ref{prop:L18} when $N=M$ and $\partial N = \emptyset$.

\begin{propositiona}\label{prop:L}
Let $M$ be a Riemannian two-sphere and $N \subset M$ be a connected domain whose boundary $\partial N$ is a finite union of minimizing geodesic loops~$c_i$.
Then
$$ \FillRad(M) \geq \frac{1}{18} L(N).$$
\end{propositiona}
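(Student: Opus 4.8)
The plan is to argue by contradiction along the lines of the filling-radius estimates in \cite{gro83} and \cite{Sa04}, adapting the construction from the proof of Proposition~\ref{prop:ell} to the relative minimax value $L(N)$. Suppose $\FillRad(M) < \frac{1}{18} L(N)$. Pick reals $r,\eps$ with $\FillRad(M) < r < r+\eps < \frac{1}{18} L(N)$, and let $i\co M \hookrightarrow L^\infty(M)$ be the Kuratowski embedding. By definition of the filling radius there is a map $\sigma\co P \to U_r$ from a $3$-pseudomanifold $P$ to the $r$-neighborhood of $i(M)$ with $\sigma_{|\partial P}$ representing the fundamental class of $i(M)\simeq M$; after deforming and subdividing we may assume $\sigma$ sends edges to minimizing segments and that simplices have $\sigma$-image of diameter $<\eps$. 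Define $f=\sigma$ on $\partial P$, send each vertex $p$ of $P\setminus\partial P$ to a nearest point $f(p)\in i(M)$ so that $d(f(p),\sigma(p))<r$, and extend over the $1$-skeleton by minimizing segments; as in Proposition~\ref{prop:ell}, adjacent vertices then satisfy $d_{i(M)}(f(p),f(q)) < 2r+\eps =: \rho$, so the image under $f$ of the boundary of each $2$-simplex is a loop of length $<3\rho$, hence of length $< \frac{1}{6}L(N)$ for $\eps$ small.

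The new ingredient is how to extend $f$ over the $2$- and $3$-skeleta using the \emph{relative} invariant $L(N)$ rather than the area bound ${\mathcal L}(M)$. For each $2$-simplex $\Delta^2$, the short loop $\gamma_{\Delta^2}=f(\partial\Delta^2)$ of length $<\frac16 L(N)$ must be filled by a disk in $M$. I would argue that a loop this short, relative to $L(N)$, can be ``swept across'' $N$: the key point is that $L(N)$ being large forces any family of one-cycles realizing the generator of $H_2(N,\partial N;\Z)$ to contain a connected component of mass $\geq L(N)$, so a connected loop of length $< \frac16 L(N)$ is far from being able to sweep out $N$; consequently it bounds a disk on one side that is ``small'' in the appropriate combinatorial sense — precisely the sense that lets us choose the fillings $D_{\Delta^2}$ coherently so that the induced map $\partial\Delta^3 \to i(M)$ on each $3$-simplex has degree zero. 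Here the hypothesis that $\partial N$ consists of \emph{minimizing} geodesic loops enters: it guarantees that the minimizing segments used to build $f$ on edges stay inside a controlled region (they do not cross to the ``bad'' side of a boundary loop), so the disks $D_{\Delta^2}$ can be taken inside $N$ or inside the complementary disks capped off by the $c_i$, and the sweepout obstruction measured by $L(N)$ really does control the filling. Once each $\partial\Delta^3\to i(M)$ has degree zero, $f$ extends over all of $P$, contradicting that $\sigma_{|\partial P}$ represents the nonzero fundamental class.

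I expect the main obstacle to be exactly this middle step: making precise the implication ``a connected loop of length $< \frac16 L(N)$ bounds a disk small enough that the fillings over the $2$-simplices of a single $3$-simplex assemble into a degree-zero map on $\partial\Delta^3$.'' In Proposition~\ref{prop:ell} this was clean because ${\mathcal L}(M)$ directly bounded the area of one of the two complementary disks; here the quantity $L(N)$ is itself a minimax over families of one-cycles, so one must convert the topological sweepout information into the geometric/combinatorial statement that the fillings are compatible. The natural way to do this is to run the argument of \cite[p.~133]{gro83}: from the family $(z_t)$ of one-cycles on $N$ underlying the definition of $L(N)$, together with the short loops produced by $f$, build a $3$-dimensional pseudomanifold $X$ with a length metric and a degree-one $1$-Lipschitz map $\partial X \to M$ such that $d(x,\partial X) \leq \frac13 L(N)$ for all $x\in X$ — improving on the cruder bound $C\,L(N)$ of inequality~\eqref{eq:d}; the constant $\frac13$, combined with the factor coming from $3\rho$ versus $L(N)$, is what yields the sharp constant $18$. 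The bookkeeping of the constant (tracking the $3\rho = 3(2r+\eps)$ against $\frac16 L(N)$, and the $\frac13$ from the sweepout) and the verification that the minimizing-loop hypothesis on $\partial N$ suffices to keep all auxiliary segments on the correct side are the two places where care is needed; everything else is a routine adaptation of the filling-radius machinery already used in Proposition~\ref{prop:ell}.
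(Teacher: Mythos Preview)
Your overall framework is right --- argue by contradiction, extend a filling over the skeleta of $P$, derive a degree-zero map on each $\partial\Delta^3$ --- but the heart of the argument is missing, and the substitute you propose does not work as stated. The paper does \emph{not} fill $f(\partial\Delta^2)$ by finding a ``small disk'' it bounds, nor does it build a Gromov-style pseudomanifold $X$; there is no analog here of the clean area bound that made Proposition~\ref{prop:ell} go through. Instead, the filling of each $2$-simplex whose boundary lands in $N$ is obtained by first flowing $f(\partial\Delta^2)$ to a simple geodesic loop via curve-shortening, and then running a \emph{curve-deformation process}: repeatedly cut the loop along minimizing arcs on its ``deformation side'' and flow the pieces, producing a $\mu$-nonincreasing homotopy of one-cycles that terminates at components of $\partial N$. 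Two preparatory lemmas (about pairs of simple geodesics each minimizing toward the side away from the other, and about non-minimizing loops of length $< L(N)$) are what guarantee this process is well-defined and stays inside $N$; this is exactly where the hypothesis that the $c_i$ are minimizing and the assumption that there are no interior minimizing loops get used. Your proposal does not identify any such mechanism, and the sentence ``a connected loop of length $<\frac16 L(N)$ bounds a disk small enough\ldots'' is precisely the step that requires it.

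Two further points you are missing. First, before any of this one must refine the simplicial structure on $P$: because each $c_i$ is minimizing, the image of each edge meets $\partial N$ at most twice, so one can insert new vertices at the crossing points and split $P$ into subcomplexes $P'$ (edges mapping into $\overline N$) and $P''$ (edges mapping into $\overline{M\setminus N}$), with the edges of $P'$ of length $<\frac32\rho$. The $P''$ simplices are filled trivially in $M\setminus N$; only $P'$ requires the deformation process. Second, the degree-zero check on $\partial\Delta^3$ is carried out by pairing the four faces into two pairs whose boundary images are disjoint, so that the combined homotopies satisfy $\mu(z_t)\le 9\rho < L(N)$ (this is where the constant $18$ comes from: $\rho=2r+\eps<\frac{1}{9}L(N)$); since the resulting family represents $[\partial\Delta^3]$ in $H_2(N,\partial N)$ with $\mu<L(N)$, it cannot be the generator and hence is trivial. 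Finally, the general case (when $N$ contains interior minimizing geodesic loops) is handled by cutting $N$ along such a loop and inducting on the number of interior minimizing loops, using $L(N)\le\max\{L(N_1),L(N_2)\}$. None of these ingredients appears in your sketch.
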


Without loss of generality, we can assume the metric on~$M$ is bumpy (a generic condition).
The curve-deformation process described below plays a key role in the proof of the above proposition.

Let $\gamma$ be a non-minimizing simple geodesic loop on~$M$ and $c$ be a minimizing arc intersecting $\gamma$ only at its endpoints.
The arc $c$ divides $\gamma$ into $c^+$ and $c^-$.
The simple loops $c^+ \cup c$ and $c^- \cup c$, endowed with the orientations induced by~$\gamma$, bound convex domains and converge to simple geodesic loops $\gamma^+$ and $\gamma^-$ through homotopies $z^+_t$ and $z^-_t$ given by the curvature flow.
Applying the curve-shortening process of~\cite[\S2]{Sa04} if necessary, we can extend the homotopies and suppose that $\gamma^+$ and $\gamma^-$ are local minima of the mass functional.
Since the curves $c^+ \cup c$ and $c^- \cup c$ do not intersect transversely and bound convex domains, the simple loops $z^+_t$ and $z^-_t$ are disjoint for $t>0$.
The sum $z_t=z^+_t +z^-_t$ defines the curve-deformation process of $\gamma$ with respect to~$c$.
The homotopy of the one-cycles $z_t$ is $\mu$-nonincreasing and satisfies $\mu(z_t) \leq \length(\gamma)$.
In particular, $\length(\gamma^+)$ and $\length(\gamma^-)$ are less than $\length(\gamma)$. \\

In the following two lemmas, we assume that the only minimizing geodesic loops of~$N$ are the connected components of the boundary $\partial N$.

\begin{lemmaa}\label{lem:two-min}
Let $\gamma$ and $\gamma'$ be two disjoint (nontrivial) simple geodesic loops of~$N$.
Suppose that $\gamma$ (resp. $\gamma'$) is minimizing with respect to the side which does not contain $\gamma'$ (resp. $\gamma$). \\
Then, $\gamma$ or $\gamma'$ is a connected component of $\partial N$.
\end{lemmaa}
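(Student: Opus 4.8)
The plan is to argue by contradiction: assume that neither $\gamma$ nor $\gamma'$ is a connected component of $\partial N$, and produce a minimizing geodesic loop in the interior of $N$, contradicting the standing hypothesis that the only minimizing geodesic loops of $N$ are the components of $\partial N$. The two disjoint simple loops $\gamma$ and $\gamma'$ divide the two-sphere into three regions: the side $A$ of $\gamma$ not containing $\gamma'$, the side $A'$ of $\gamma'$ not containing $\gamma$, and the annular region $B$ between them. By hypothesis $\gamma$ is minimizing with respect to $A$ and $\gamma'$ is minimizing with respect to $A'$, so the failure of either loop to be minimizing can only come from the $B$-side.

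First I would analyze $\gamma$. If $\gamma$ is minimizing with respect to the $B$-side as well, then $\gamma$ is a minimizing simple geodesic loop; since it is not a component of $\partial N$ by assumption, this already contradicts the hypothesis and we are done. So we may assume $\gamma$ is non-minimizing with respect to $B$: there is a pair of points on $\gamma$ joined by a minimizing arc $c$ meeting $\gamma$ only at its endpoints and entering $B$. Apply the curve-deformation process from the paragraph preceding the lemma to $\gamma$ with respect to $c$: it splits $\gamma$ into $\gamma^+$ and $\gamma^-$, geodesic loops that are local minima of the mass functional, disjoint from each other, with $\length(\gamma^\pm) < \length(\gamma)$, and — crucially — the whole deformation stays on the $\gamma$-together-with-$B$ side, hence remains disjoint from $\gamma'$. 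One of the two resulting loops, say $\gamma^+$, separates $A$ (enlarged) from $\gamma'$; the other, $\gamma^-$, is ``inner''. The key geometric point to nail down is that $\gamma^+$ is now minimizing with respect to the side not containing $\gamma'$: on the $A$-side this is inherited from $\gamma$ (minimizing segments between points of $\gamma$ avoiding $A$ can be transported), and $\gamma^+$ being a local minimum of mass forces it to be minimizing on that side.

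Then I would iterate. We are back in the situation of the lemma with $\gamma$ replaced by the strictly shorter $\gamma^+$ and $\gamma'$ unchanged (and symmetrically one can shorten $\gamma'$ if needed). Because the metric is bumpy and each step strictly decreases length while keeping the loop a local minimum of mass, this process must terminate — the lengths cannot decrease indefinitely through a discrete set of geodesics below a fixed bound, and a loop that is a local minimum of mass and cannot be shortened by the deformation process with respect to \emph{any} minimizing chord is minimizing. At termination we obtain a minimizing simple geodesic loop lying in the interior of $N$ (it is disjoint from both $\gamma'$ and the components of $\partial N$ throughout, since every deformation step is confined to the shrinking region and does not cross $\gamma'$), contradicting the hypothesis that all minimizing geodesic loops of $N$ are boundary components.

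The main obstacle I anticipate is the bookkeeping in the inductive step: one must verify carefully that after the curve-deformation the ``outer'' loop $\gamma^+$ still satisfies the one-sided minimizing hypothesis relative to $\gamma'$, and that the process genuinely terminates (rather than producing an infinite sequence of ever-shorter loops converging to something degenerate). The bumpiness assumption and the fact that the deformation produces \emph{local minima} of mass — both recorded just before the lemma — are exactly what is needed to rule out such pathologies, so the argument hinges on using those properties at the right moment.
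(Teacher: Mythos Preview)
Your overall strategy---iterate the curve-deformation process and reach a contradiction---matches the paper's, but the inductive step you propose has a genuine gap. You want to replace the pair $(\gamma,\gamma')$ by $(\gamma^+,\gamma')$ and re-run the lemma's hypothesis, which requires that $\gamma^+$ be minimizing with respect to the side not containing~$\gamma'$. Your justification for this (``inherited from $\gamma$'' and ``local minimum of mass forces it'') does not hold up: the side of $\gamma^+$ away from $\gamma'$ is strictly larger than~$A$ (it contains the region swept out by the deformation, and the domain cut off by~$\gamma^-$), and there is no mechanism by which the one-sided minimizing property of~$\gamma$ transfers to~$\gamma^+$ on this enlarged side. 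Being a local minimum of the mass functional says only that nearby curves are not shorter; it does not control where minimizing segments between points of~$\gamma^+$ run. So the hypothesis you need to iterate is simply not available.

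The paper sidesteps this entirely by \emph{not} trying to preserve the one-sided minimizing property. It keeps \emph{all} the loops produced by the deformation (both $\gamma^+$ and $\gamma^-$, and their descendants) as a one-cycle~$z_k$ lying in the region~$N'$ between $\gamma$ and~$\gamma'$, and repeatedly applies the curve-deformation process to any component of~$z_k$ that fails to be minimizing with respect to \emph{either} side. Termination (via bumpiness) then yields a $z_n$ whose components are all genuinely minimizing, hence lie in~$\partial N$ by the standing hypothesis. The conclusion is not a contradiction from a minimizing loop in the interior---your second unjustified claim, that the process stays disjoint from~$\partial N$, is also unwarranted---but rather the topological observation that $\gamma'$ is trapped between $z_n\subset\partial N$ and~$\partial N$, forcing $\gamma'$ itself to be a boundary component. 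Discarding $\gamma^-$ and trying to carry the one-sided hypothesis forward is exactly where your argument diverges from one that works.
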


\begin{proof}
The geodesics $\gamma$ and $\gamma'$ bound a connected domain $N' \subset N$ with $\gamma,\gamma' \subset \partial N'$.
Suppose that $\gamma$ does not lie in $\partial N$.
Since the only minimizing geodesic loops of~$N$ are the connected components of $\partial N$, the loop~$\gamma$ is not minimizing.
Therefore, there exists a minimizing arc $c$ in $N'$ with endpoints in $\gamma$ which does not lie in $\gamma$.
The curve-deformation process applied to $\gamma$ with respect to $c$ yields two disjoint simple geodesic loops of~$N'$.
The union of these two geodesic loops is noted~$z_1$.
Let us define by induction on $k$ a sequence $z_k$ formed of a disjoint union of simple geodesic loops of~$N'$.
This sequence of one-cycles is $\mu$-nonincreasing. Furthermore, each~$z_k$ bounds with~$\gamma$ a connected domain~$D_k$ of~$N'$ with~$\partial D_k = z_k \cup \gamma$.
If one of the geodesic loops of~$z_k$ is not minimizing with respect to the side opposed to $\gamma$, we apply to it the curve-deformation process with respect to this side.
Otherwise, if one of them is not minimizing with respect to the side containing $\gamma$, we apply to it the curve-deformation process.
In both cases, we obtain a new collection~$\mathcal{C}$ of simple geodesic loops which do not intersect each other.
We define $z_{k+1}$ as the union of the nontrivial geodesic loops of~$\mathcal{C}$ which can be joined to~$\gamma$ by paths cutting no loop of~$\mathcal{C}$.
This induction process stops after a finite number $n$ of steps when $z_n$ is only formed of minimizing geodesic loops of~$N'$.
Since there is no minimizing geodesic loop in the interior of~$N$, we have $z_n \subset \partial N$.
Therefore, the geodesic loop~$\gamma'$, which lies between $z_n$ and $\partial N$, is a connected component of $\partial N$.
\end{proof}

\begin{lemmaa}\label{lem:side}
Let $\gamma$ be a simple geodesic loop of $N$ of length less than~$L(N)$ and suppose that $\gamma$ is minimizing with respect to no side. \\
There exists a non-minimizing simple geodesic loop of~$N$, lying in a side of~$\gamma$, which is minimizing with respect to the side opposed to~$\gamma$.
\end{lemmaa}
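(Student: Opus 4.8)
The plan is to run the curve-deformation process starting from $\gamma$ and iterate it, at each step pushing \emph{away} from $\gamma$ into a fixed side of $\gamma$, until we reach a simple geodesic loop that is minimizing with respect to its side opposed to $\gamma$, and then to check that this loop is non-minimizing. Since $\gamma$ is minimizing with respect to no side, it fails in particular to be minimizing with respect to one of its two sides; call this side $A$ and the other one $B$. By definition there is a pair of points of $\gamma$ that cannot be joined by a minimizing segment avoiding $A$; choosing such a minimizing segment and passing to a subarc if it meets $\gamma$ again, we obtain a minimizing arc $c\subset\overline{A}$ with endpoints on $\gamma$ and interior in the open side $A$. Applying the curve-deformation process of $\gamma$ with respect to $c$ yields two disjoint simple geodesic loops $\gamma^{+},\gamma^{-}$, both local minima of the mass functional, both contained in $\overline{A}$, of length at most $\length(\gamma)<L(N)$, and obtained from $\gamma$ through a $\mu$-nonincreasing homotopy $(z_t)$ with $\mu(z_t)\le\length(\gamma)<L(N)$; moreover, for each of them the side not containing $\gamma$ is the convex domain it bounds inside $\overline{A}$. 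At least one of $\gamma^{+},\gamma^{-}$ is nontrivial (otherwise $\gamma$ would be contracted by $(z_t)$ through one-cycles of $\mu$-value $<L(N)$, which is incompatible with the minimax definition of $L(N)$ and the hypothesis $\length(\gamma)<L(N)$); pick a nontrivial one and call it $\sigma_1$, a simple geodesic loop lying in the side $A$ of $\gamma$.

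Now iterate. Given a nontrivial simple geodesic loop $\sigma_k\subset\overline{A}$ of length $<L(N)$ reached from $\gamma$ by a $\mu$-nonincreasing homotopy of $\mu$-value $<L(N)$, write $S_k$ for the side of $\sigma_k$ not containing $\gamma$ and $S_k'$ for the other one. If $\sigma_k$ is minimizing with respect to $S_k$, stop. Otherwise there is a minimizing arc entering $S_k$, and the curve-deformation process of $\sigma_k$ with respect to it produces disjoint simple geodesic loops contained in $\overline{S_k}$ whose sides not containing $\gamma$ are \emph{strictly} contained in $S_k$, still reached from $\gamma$ by a $\mu$-nonincreasing homotopy of $\mu$-value $<L(N)$ and, by the same argument as above, not all trivial; choose a nontrivial one as $\sigma_{k+1}$ and continue. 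The regions $S_1\supsetneq S_2\supsetneq\cdots$ are strictly nested, and each $\sigma_k$ is a simple closed geodesic (the process removes corners) which is a local minimum of length and has length $<L(N)$; since the metric is bumpy there are only finitely many such closed geodesics, so after finitely many steps the process stops at a nontrivial simple geodesic loop $\sigma$ minimizing with respect to the side $S_n$ opposed to $\gamma$. (Compare this terminating nesting with the induction in the proof of Lemma~\ref{lem:two-min}.)

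It remains to see that $\sigma$ is non-minimizing, i.e.\ not minimizing with respect to $S_n'$. If it were, $\sigma$ would be minimizing with respect to both sides, hence a component of $\partial N$ by the standing hypothesis that the only minimizing geodesic loops of $N$ are the components of $\partial N$. But the whole construction stays inside $N$: it starts from $\gamma$, which lies in the interior of $N$ since $\gamma$ is non-minimizing, and moves into side $A$ without meeting $\partial N$. If $\sigma$ were a component of $\partial N$ the region it bounds on the side opposed to $\gamma$ would lie outside $N$, and then the iteration had in fact already produced, at the previous step, a nontrivial non-minimizing simple geodesic loop which inside $N$ is minimizing with respect to its side opposed to $\gamma$ (the segments responsible for non-minimality lying beyond $\partial N$); one takes that loop instead of $\sigma$. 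In every case we obtain a non-minimizing simple geodesic loop lying in a side of $\gamma$ and minimizing with respect to the side opposed to $\gamma$, which is the assertion.

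The main obstacle is the detailed analysis of the curve-deformation process underlying the two geometric claims used above — that a nontrivial loop it produces bounds a convex domain on the side opposed to $\gamma$, and that, once the iteration stabilizes, such a loop is minimizing with respect to that side — together with the bookkeeping needed for the interaction with $\partial N$ so that the final loop is genuinely non-minimizing rather than a boundary component; the termination of the iteration, by contrast, is the easy part, following from bumpiness and the strict nesting $S_1\supsetneq S_2\supsetneq\cdots$.
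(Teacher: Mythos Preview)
Your nontriviality claim is where the argument breaks. You assert that if both $\gamma^{+}$ and $\gamma^{-}$ were trivial then the contraction of $\gamma$ through one-cycles of $\mu$-value $<L(N)$ would contradict the minimax definition of $L(N)$. It does not: $L(N)$ is a minimax over families inducing a generator of $H_2(N,\partial N)$, and a contraction of $\gamma$ inside one side~$A$ is not such a family --- it sweeps out only the disk that $\gamma$ bounds in~$\overline{A}$, not all of~$N$. So nothing prevents the iteration on side~$A$ from terminating entirely at points (or at boundary components), and you have produced no loop satisfying the conclusion. The same flawed argument is invoked at every step of the induction (``by the same argument as above, not all trivial''), so the whole iteration is unsupported. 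Your final paragraph inherits this problem: if $\sigma$ is a boundary component, backing up to $\sigma_{n-1}$ does not help, since $\sigma_{n-1}$ was explicitly \emph{not} minimizing with respect to its side opposed to~$\gamma$ (the witnessing segment lay in $S_{n-1}\subset N$, not ``beyond $\partial N$'').

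The paper's proof avoids this by arguing the contrapositive and working on \emph{both} sides simultaneously. Assuming no non-minimizing loop of the desired type exists, it runs the curve-deformation process on each side $N_1,N_2$ of $\gamma$ (extended as in Lemma~\ref{lem:two-min}); the contrapositive hypothesis guarantees each homotopy stays in its own side and terminates at components of $\partial N$. Concatenating the two homotopies through $\gamma$ then \emph{does} give a family inducing a generator of $H_2(N,\partial N)$ with $\mu(z_t)\le\length(\gamma)$, forcing $\length(\gamma)\ge L(N)$. The use of both sides is exactly what manufactures the sweep-out and hence the contradiction; working on a single side, as you do, cannot produce one.
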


\begin{proof}
Assume that every simple geodesic loop of~$N$, disjoint from $\gamma$ and minimizing with respect to the side opposed to~$\gamma$, is minimizing.
Let us recall that the only minimizing geodesic loop of $N$ are the connected components of $\partial N$.
We apply the curve-deformation process to $\gamma$ with respect to each of its sides~$N_1$ and~$N_2$.
Then, we extend these homotopies as in the proof of Lemma~\ref{lem:two-min}.
By assumption, one of these homotopies lies in~$N_1$ and the other in~$N_2$.
Since there is no minimizing geodesic loop in the interior of~$N$, these homotopies, put together, yield a one-parameter family of one-cycles~$z_t$ in~$N$, which starts from $\partial N_1 \cap \partial N$, passes through $\gamma$ and ends at $\partial N_2 \cap \partial N$.
Furthermore, the family~$(z_t)$ induces a generator of $H_2(N,\partial N)$ and satisfies $\mu(z_t) \leq \length(\gamma)$.
Therefore, $\length(\gamma) \geq L(N)$ from the definition of~$L(N)$.
\end{proof}

\begin{remarka} \label{rem:nonmin}
From Lemma~\ref{lem:two-min}, the non-minimizing simple geodesic loops of Lemma~\ref{lem:side} lie in only one side of~$\gamma$.
\end{remarka}

Using the lemma~\ref{lem:side} and the remark~\ref{rem:nonmin}, we can define the side of deformation of a geodesic loop and make the curve-deformation process more precise. \\

\begin{definitiona}
The side of deformation of a non-minimizing simple geo\-desic loop~$\gamma$ of $N$ of length less than~$L(N)$ is
\begin{enumerate}
\item the opposite side with respect to which $\gamma$ is minimizing, if $\gamma$ is minimizing with respect to a side.
\item the side which does not contain any geodesic loop minimizing with respect to the side opposed to~$\gamma$ (except the connected components of~$\partial N$), otherwise.
\end{enumerate}
\end{definitiona}

From now on, when we will apply the curve-deformation process to such a geodesic loop~$\gamma$, it will always be with respect to a minimizing arc with endpoints in $\gamma$ lying in the deformation side of $\gamma$.

\begin{proof}[Proof of Proposition~\ref{prop:L}]
Let us assume first that the only minimizing geodesic loops of~$N$ are the connected components of $\partial N$.
We argue by contradiction as in the proof of Proposition~\ref{prop:ell}.
Pick two reals $r$ and~$\eps$ such that $\FillRad(M) < r < r+\eps <\frac{1}{8} L(M)$.
Let~\mbox{$i:M \hookrightarrow L^{\infty}(M)$} be the Kuratowski distance preserving embedding defined by $i(x)(.)=d_{M}(x,.)$.
By definition of the filling radius, there exists a map $\sigma:P \to U_{r}$ from a $3$-complex~$P$ to the $r$-tubular neighborhood $U_{r}$ of~$i(M)$ in~$L^{\infty}=L^{\infty}(M)$ such that
$\sigma_{|\partial P}: \partial P \to i(M)$ represents the fundamental class of~$i(M) \simeq M$ in $H_{2}(i(M);\Z) \simeq H_{2}(M;\Z)$.
Let us show that the map $\sigma_{|\partial P}: \partial P \to i(M)$ extends to a map \mbox{$f:P \to i(M)$}. \\

Deforming $\sigma$ and subdividing $P$ if necessary, we define an extension $f:P^{1} \cup \partial P \to i(M) \simeq M$ of~$\sigma_{|\partial P}$ to the $1$-skeleton of~$P$ as in the proof of Proposition~\ref{prop:ell}.
From now on, we will identify $i(M)$, $i(N)$ and~$i(\partial N)$ with $M$, $N$ and~$\partial N$.
Recall that the image under~$f$ of the boundary of every $2$-simplex of~$P$ is a simple loop of length less than~$3\rho$, where $\rho=2r + \eps$.
Since the connected components of $\partial N$ are minimizing geodesic loops, the images of the edges of $P$ cut $\partial N$ at most twice.
In particular, the images of the boundary of the $2$-simplices of $P$ cut $\partial N$ at most six times.

On every edge of $P$ whose image cuts $\partial N$, we introduce new vertices given by the preimages of the intersection points with~$\partial N$.
Let~$\Delta^2$ be a $2$-simplex of~$P$.
Every pair of new vertices of $\Delta^2$ which map to the same connected component of~$\partial N$ defines a new edge in $\Delta^2$.
By definition, the images by~$f$ of these new edges are the minimizing segments (lying in $\partial N$) joining the images of their endpoints.
We number the vertices of $P$, old and new, and consider the natural induced order.
The new edges of $\Delta^2$, which map to~$N$, bound a domain $D^2$ of $\Delta^2$.
We introduce new edges on $D^2$ joining the greatest vertex of $D^2$ to every other vertex of $D^2$.
This decomposes~$\Delta^2$ into triangles.
By definition, each new edge maps onto a minimizing segment of~$N$ joining the images of its endpoints.
The triangles of the faces of every $3$-simplex~$\Delta^3$ whose boundaries map to~$N$ bound a domain~$D^3$ of~$\Delta^3$.
We introduce, as previously, new edges on $D^3$ joining the greatest vertex of~$D^3$ to every other vertex of~$D^3$.
These new edges define new faces in~$\Delta^3$.
These new faces  decompose~$\Delta^3$ into $3$-simplices.
As previously, each new edge maps onto a minimizing segment of~$N$ joining the images of its endpoints. \\

In conclusion, we have defined a new simplicial structure on $P$, a map \linebreak
$f:P^1 \cup \partial P \longrightarrow M$ with $f_{|\partial P} = \sigma_{|\partial P}$ and a decomposition of $P$ into two finite subcomplexes $P'$ and $P''$ such that
\begin{enumerate}
\item the image of the 1-skeleton of every 3-simplex of $P'$ (resp. $P''$) lies in the closure of~$N$ (resp. $M \setminus N$).
\item the boundary of every 2-simplex of $P'$ maps into a geodesic triangle whose length of the edges is less than $\frac{3}{2} \rho < \frac{1}{6} L(N)$
\end{enumerate}

We want to extend $f$ to $P^2$.
Let $\partial \Delta^2$ be the boundary of a $2$-simplex $\Delta^2$ of $P$ which does not lie in~$\partial P$.
If $\partial \Delta^2$ lies in~$P''$, we fill its image by the disk it bounds in $M \setminus N$.
Otherwise, the image of $\partial \Delta^2$ lies in $N$.
In this case, it converges to a simple geodesic loop~$c$ of~$N$ by the curve-shortening process of~\cite[\S2]{Sa04}.
We want now to define a $\mu$-nonincreasing homotopy of one-cycles in $N$ from~$c$ to some connected components of $\partial N$, which gives rise to a filling of the image of $\partial \Delta^2$ in $M$. \\

The images of the boundaries of the $2$-simplices of $P'$ converge to simple geodesic loops of~$N$.
The set formed of these simple geodesic loops which are not minimizing is noted~$\mathcal{C}$.
It is finite since $P'$ is a finite simplicial complex.
We want to define homotopies of one-cycles in $N$ from every loop of~$\mathcal{C}$ to some minimizing geodesic loops.
Let us consider the deformation sides of two disjoint geodesic loops of $\mathcal{C}$.
From Lemma~\ref{lem:two-min}, either one is contained in the other or their intersection is disjoint.
Therefore, there exists $\gamma_0 \in \mathcal{C}$ such that no geodesic loop of~$\mathcal{C}$ lies in the deformation side of $\gamma_0$.
The curve-deformation process applied to $\gamma_0$ yields a homotopy of one-cycles $z_t$ defined for $0 \leq t \leq 1$ between $\gamma_0$ and the disjoint union $\gamma^+_0 \cup \gamma^-_0$ of two simple geodesic loops.
The flow of every other loop of $\mathcal{C}$ is constant for $0 \leq t \leq 1$.
We repeat this construction with the set of the non-minimizing geodesic loops of $\{ \gamma \mid \gamma \in \mathcal{C}, \gamma \neq \gamma_0 \} \cup \{\gamma^+_0, \gamma^-_0 \}$.
Since for a (generic) bumpy metric there are only finitely many geodesic loops of length uniformly bounded, this process stops after a finite number of iterations.
From Lemma~\ref{lem:two-min}, the deformation side of every simple geodesic loop of~$N$ lying in the deformation side of a geodesic loop~$\gamma$ does not contain~$\gamma$.
Therefore, the iterations of this process give rise to $\mu$-nonincreasing homotopies of one-cycles between the geodesic loops~$\gamma$ of~$\mathcal{C}$ and the connected components of $\partial N$ which lie in the deformation side of $\gamma$.
A contraction of these latter into points in $M \setminus N$ yields a filling of the boundary of the $2$-simplices of $P'$ in~$M$.
Furthermore, the homotopies arising from two geodesic loops of~$\mathcal{C}$ which do not intersect each other remain disjoint at every time $t$, except possibly for some connected components lying in $\partial N$.
Thus, the sum $z_t$ of the homotopies arising from two disjoint geodesics $\gamma$ and $\gamma'$ of $\mathcal{C}$ satisfies $\mu(z_t) \leq \length(\gamma) + \length(\gamma') \leq 9 \rho$.

In conclusion, for every $2$-simplex $\Delta^2$ of $P$, we get a degree one map from~$\Delta^2$ onto its image in~$i(M) \simeq M$ which agrees with $f$ on the boundary.
This yields the desired extension $f:P^2 \longrightarrow i(M) \simeq M$. \\

Let us extend this map to $P^3$.
Let $\Delta^3$ be a $3$-simplex of $P$.
The restriction of $f$ to the boundary $\partial \Delta^3$ is noted $\varphi : \partial \Delta^3 \longrightarrow M$.
If $\Delta^3$ lies in $P''$, it extends to a map defined on $\Delta^3$ whose image lies in the closure of~$M \setminus N$.
Otherwise, $\Delta^3$ lies in $P'$ and the map $\varphi$ induces a class $[\partial \Delta^3]$ in $H_2(M, M \setminus N) \simeq H_2(N, \partial N)$.
This class arises from degree one maps defined on the faces of~$\Delta^{3}$.
Therefore, it is trivial or generates $H_{2}(N,\partial N)$.
The map~$\varphi$ extends to $\Delta^3 \longrightarrow M$ if and only if the class $[\partial \Delta^3]$ vanishes in $H_2(N, \partial N)$.
Since the images of the edges of $\Delta^3$ are minimizing segments, the image of the $1$-skeleton of~$\Delta^3$ is isotopic to one of the following two graphs on~$M$ shown in Figure \ref{graphs}.

Therefore, the faces of $\Delta^3$ decompose into two pairs such that the edges of the faces of each pair do not intersect each other transversely.
By construction, the images of the faces of $\partial \Delta^3$ are defined by $\mu$-nonincreasing homotopies of one-cycles $z_i^t$ between the images of their boundaries and some connected components of $\partial N$ or null one-cycles.
Furthermore, the sum of the two homotopies $z_1^t$ and $z_2^t$ arising from a pair of faces that do not intersect each other transversely satisfies $\mu(z_1^t +z_2^t) \leq 9 \rho$.
The same goes for the other pair of faces, which yields the homotopies $z_3^t$ and~$z_4^t$.
Putting together the homotopies $z_1^t + z_2^t$ and $z_3^t + z_4^t$, we obtain a one-parameter family of one-cycles $z_t$.
By construction, the family~$(z_t)$ represents $[\partial \Delta^3]$ and satisfies $\mu(z_t) \leq 9 \rho < L(N)$.
Therefore, the class $[\partial \Delta^3]$ vanishes in $H_2(N, \partial N)$. \\

Thus, the map $f$ extends to each simplex of $P$ and gives rise to an extension $f:P \longrightarrow i(M) \simeq M$ of~$\sigma_{|\partial P}$.
This proves the inequality when $N$ has no minimizing geodesic in its interior. \\

Suppose now that $N$ admits a minimizing geodesic loop~$\gamma$ which does not lie in $\partial N$.
This simple loop decomposes $N$ into two connected components $N_1$ and $N_2$, whose boundaries are finite unions of minimizing simple geodesic loops.
By induction on the number of nontrivial minimizing geodesic loops which lie in the interior of $N$, we have $\FillRad(M) \geq \frac{1}{18} L(N_i)$ for $i=1,2$.
Let us consider some homotopies of one-cycles on $N_i$ between the connected components of $\partial N_i$ with $i=1,2$ which induce generators of $H_2(N_i, \partial N_i)$ for $i=1,2$.
Put together, these homotopies yield one-parameter families of one-cycles satisfying (C.1-2).
We deduce that $L(N) \leq \max \{L(N_1), L(N_2)\}$.
This proves the result by induction.
\end{proof}

\begin{figure}[h]
\begin{center}
\AffixLabels{\centerline{\epsfig{file =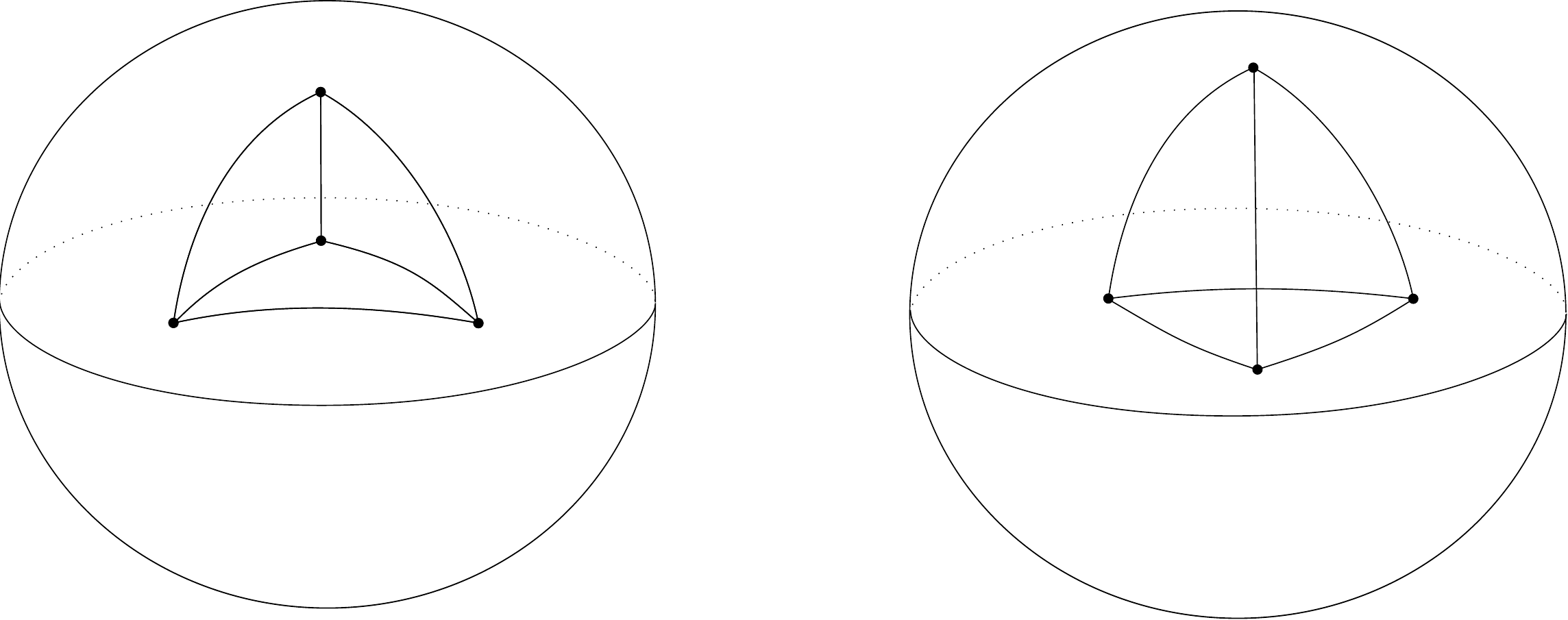,width=10cm,angle=0}}}
\end{center}
\caption{}
\label{graphs}
\end{figure}

\end{document}